\newcommand{\E}{\mathcal{E}}
\renewcommand{\H}{\mathcal{H}}
\renewcommand{\L}{\mathcal{L}}
\newcommand{\M}{\mathcal{M}}
\newcommand{\X}{\mathcal{X}}
\newcommand{\Y}{\mathcal{Y}}
\newcommand{\U}{\mathcal{U}}
\newcommand{\R}{\mathbb{R}}
\newcommand{\Om}{\Omega}
\renewcommand{\a}{\alpha}
\renewcommand{\b}{\beta}
\newcommand{\g}{\gamma}
\newcommand{\de}{\delta}
\newcommand{\e}{\varepsilon}
\renewcommand{\k}{\kappa}
\renewcommand{\l}{\lambda}
\newcommand{\s}{\sigma}
\newcommand{\vphi}{\varphi}
\newcommand{\Div}{{\rm div}\,}
\newcommand{\Id}{{\rm Id}\,}
\newcommand{\spt}{{\rm spt}}
\newcommand{\weak}{\rightharpoonup}
\newcommand{\weakstar}{\stackrel{*}{\rightharpoonup}}
\newcommand{\ov}{\overline}
\newcommand{\pa}{\partial}
\newcommand{\pp}{\mathbf{p}}
\newcommand{\eps}{\varepsilon}
\newcommand{\CC}{\textbf{\textup{C}}}
\newcommand{\DD}{\textbf{\textup{D}}}
\newcommand{\pstar}{{p^{\star}}}
\newcommand{\psharp}{{p^{\#}}}
\newcommand{\vv}{\mathsf{v}}
\renewcommand{\tt}{\mathsf{t}}
\renewcommand{\gg}{\mathsf{g}}
\newcommand{\ISO}{{\rm ISO}}
\newcommand{\na}{{\nabla}}
\theoremstyle{plain}
\newtheorem{theorem}{Theorem}[section]
\newtheorem{lemma}[theorem]{Lemma}
\newtheorem*{theorem*}{Theorem}
\newtheorem*{corollary*}{Corollary}
\theoremstyle{definition}
\newtheorem{remark}[theorem]{Remark}
\newtheorem*{notation*}{Notation}
\numberwithin{equation}{section}
\numberwithin{figure}{section}
\title[Rigidity theorems for best {S}obolev inequalities]{Rigidity theorems for best {S}obolev inequalities}
\author{Francesco Maggi}
\address{Department of Mathematics, The University of Texas at Austin, 2515 Speedway, Stop C1200, Austin TX 78712-1202, United States of America}
\email{maggi@math.utexas.edu}
\author{Robin Neumayer}
\address{Department of Mathematical Sciences, Carnegie Mellon University, Pittsburgh, PA 15213, United States of America}
\email{neumayer@cmu.edu}
\author{Ignacio Tomasetti}
\address{Department of Mathematics, The University of Texas at Austin, 2515 Speedway, Stop C1200, Austin TX 78712-1202, United States of America}
\email{tomasetti@math.utexas.edu}
\begin{document}

\begin{abstract}
{\rm For $n\ge 2$, $p\in(1,n)$, the ``best $p$-Sobolev inequality'' on an open set $\Om\subset\R^n$ is identified with a family $\Phi_\Om$ of variational problems with critical volume and trace constraints. When $\Om$ is bounded we prove: (i) for every $n$ and $p$, the existence of generalized minimizers that have at most one boundary concentration point, and: (ii) for $n> 2\,p$, the existence of (classical) minimizers. We then establish rigidity results for the comparison theorem ``balls have the worst best Sobolev inequalities'' by the first named author and Villani, thus giving the first affirmative answers to a question raised in \cite{maggivillaniJGA}.}
\end{abstract}


\maketitle

\tableofcontents

\section{Introduction} \subsection{Overview}\label{subsection overview} The goal of this paper is to answer some basic open questions concerning a ``doubly critical'' family $\{\Phi_\Om(T)\}_{T\ge0}$ of minimization problems on Sobolev functions, which, in a precise sense to be clarified below, can be interpreted as collectively defining the best Sobolev inequality on an open set $\Om\subset\R^n$ with $C^1$-boundary. Given an integer $n\ge 2$ and $p\in(1,n)$, these problems are defined as
\begin{equation}
  \label{phi Omega T}
  \Phi_\Om(T)=\inf\Big\{\Big(\int_\Om|\nabla u|^p\Big)^{1/p}:\int_{\Om}|u|^\pstar=1\,,\,\int_{\pa\Om}|u|^\psharp=T^\psharp\Big\}\,,
\end{equation}
and their minimizers, whenever they exist, satisfy the Euler--Lagrange equation
\begin{equation}
  \label{PDE of generic minimizer}
  \left\{
  \begin{split}
    &-\Delta_p u=\l\,u^{\pstar-1}\,,\quad\hspace{1cm}\mbox{on $\Om$}\,,
    \\
    &|\nabla u|^{p-2}\,\frac{\pa u}{\pa\nu_\Om}=\s\,u^{\psharp-1}\,,\quad\mbox{on $\pa\Om$}\,,
  \end{split}\right .
\end{equation}
for suitable Lagrange multipliers $\l,\s\in\R$.  We call $\int_{\Om}|u|^\pstar=1$ and $\int_{\pa\Om}|u|^\psharp=T^\psharp$ the ``volume'' and ``trace'' constraints of $\Phi_\Om(T)$. The critical Sobolev exponents associated to $n$ and $p$,  $\pstar$ and $\psharp$, are defined by
\[
\pstar=\frac{np}{n-p}\,,\qquad\psharp=\frac{(n-1)p}{n-p}=\frac{n-1}{n}\,\pstar\,,
\]
and their precise values guarantee the scale invariance of $\Phi_\Om$, i.e.
\[
\Phi_{x+r\,\Om}(T)=\Phi_\Om(T)\qquad\forall x\in\R^n\,,r>0\,, T\ge0\,.
\]
When $\Om$ is bounded, the $C^1$-regularity of $\pa\Om$ guarantees that every $u\in L^1_{\rm loc}(\Om)$ with $\nabla u\in L^p(\Om;\R^n)$ lies in the competition class of $\Phi_\Om(T)$ for some  $T\ge0$. In particular,
\[
{\rm Epi}(\Phi_\Om)=\big\{(T,G)\in\R^2:T\ge0\,,G\ge\Phi_\Om(T)\big\}\,,
\]
(the epigraph of $\Phi_\Om$) collects the best possible information on the range of values achievable by $\|\nabla u\|_{L^p(\Om)}$ when $\|u\|_{L^\pstar(\Om)}$ is fixed: from this peculiar viewpoint, which is reminiscent of the one adopted in the study of Blaschke--Santal\'o diagrams, ${\rm Epi}(\Phi_\Om)$ {\it is} ``the best Sobolev inequality on $\Omega$''. The following list of results, summarized in
\begin{figure}
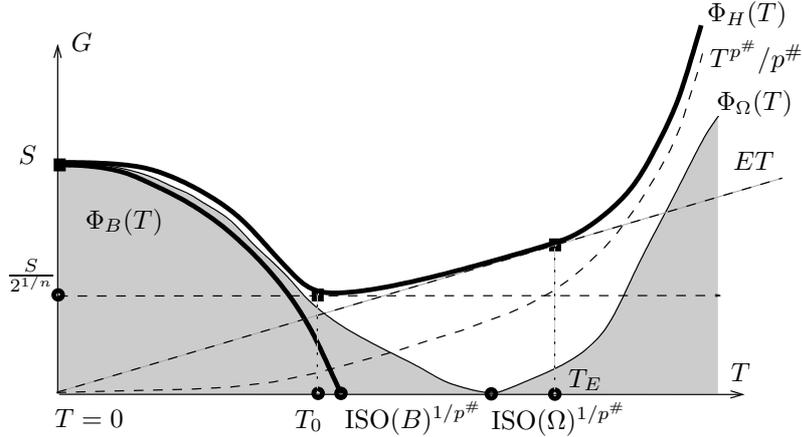
\caption{\small{The state of the art about $\Phi_\Om$. The ``exclusion zone'' for the values of $\|\nabla u\|_{L^p(\Om)}$ (under the constraint $\|u\|_{L^\pstar(\Om)}=1$) is depicted in gray. It always contains the subgraph of $\Phi_B(T)$ on $T\in[0,\ISO(B)^{1/\psharp}]$, see \eqref{mv comparison}, but is always smaller than the one of a half-space $H$, see \eqref{mn comparison}. The Sobolev inequality on $\R^n$ is equivalent to $\Phi_\Om(0)=S(n,p)$, the Euclidean isoperimetric inequality to the fact that the only zero of $\Phi_\Om$ (i.e. $T=\ISO(\Om)^{1/\psharp}$) is achieved to the right of the only zero of $\Phi_B$, and the Escobar inequality \eqref{escobar inequality} is equivalent to the linear bound $\Phi_H(T)\ge E\,T$. Both $\{(T,\Phi_B(T)):T\in[0,\ISO(B)^{1/\psharp}]\}$ and $\{(T,\Phi_H(T)):T\ge0\}$ can be implicitly parametrized by looking at the explicit families of minimizers given in \eqref{PhiB characterization}, \eqref{PhiH sobolev range}, \eqref{PhiH escobar} and \eqref{PhiH beyond E range}.}}
\label{fig summary}
\end{figure}
Figure \ref{fig summary}, aims to provide a hopefully complete state of the art on $\Phi_\Om$, and illustrates the wealth of information stored in this family of variational problems. As a disclaimer: here we are definitely {\it not} attempting to exhaustively frame the study of $\Phi_\Om$ into the incredibly vast and layered context of the theory of Sobolev-type inequalities (see e.g. \cite{mazyaBOOKonSobolev}), as that would be a long and delicate exercise, lying well beyond the scope of this introduction.

\medskip

\noindent {\bf (1) Sobolev inequality on $\R^n$:} A scaling and localization argument shows that, for every open set $\Omega$, one has
\begin{equation}
  \label{sobolev inequality on Rn}
  \Phi_\Om(0)=S(n,p):=\inf\Big\{\Big(\int_{\R^n}|\nabla u|^p\Big)^{1/p}:\int_{\R^n}|u|^\pstar=1\Big\}\,,
\end{equation}
that is, $\Phi_\Om(0)$ is the best constant in the $L^p$-Sobolev inequality on $\R^n$. Minimizers of \eqref{sobolev inequality on Rn} are exactly given by the family $\{\tau_{x_0}[U_S^{(\a)}]\}_{x_0\in\R^n\,,\a>0}$ generated by
\begin{equation}
  \label{U Sobolev}
  U_S(x)=\big(1+|x|^{p/(p-1)}\big)^{1-(n/p)}\,,\qquad x\in\R^n\,,
\end{equation}
see \cite{aubin76,talenti76,CENV2004}, we see that $\Phi_\Om(0)$ is attained if and only if $\Om=\R^n$. Here and in the following, we set
\begin{equation}
  \label{def of tau x0 and of rho alfa}
  \tau_{x_0}[v](x)=v(x-x_0)\,,\qquad v^{(\a)}(x)=\a^{-n/\pstar}\,v(x/\a)
\end{equation}
whenever $x_0\in\R^n$ and $\a>0$.

\medskip

\noindent {\bf (2) Escobar inequality:} The Escobar inequality (\cite[for $p=2$]{escobarTRACE_INQ88}, \cite[for $p\in(1,n)$]{Nazaret06}) states that if $H$ is an (open) half-space in $\R^n$ with outer unit normal $\nu_H$, then
\begin{equation}
  \label{escobar inequality}
  \Big(\int_H|\nabla u|^p\Big)^{1/p}\ge E(n,p)\,\Big(\int_{\pa H}|u|^\psharp\Big)^{1/\psharp}
  \end{equation}
with equality if and only if\footnote{The ``only if'' statement for $p \neq 2$ was left open in \cite{Nazaret06}, but was proven in \cite[Theorem 2.3]{maggineumayer}.}
 $u=\tau_{x_0}[U_E^{(\a)}]$ for some $x_0\in\R^n\setminus\ov{H}$, and where
\begin{equation}
  \label{U Escobar}
  U_E(x)={|x|^{-(n-p)/(p-1)}}\,,\qquad x\in\R^n\setminus\{0\}\,,
\end{equation}
is a multiple of the fundamental solution of the $p\,$-Laplacian. The quantity
\begin{equation}
  \label{def of TE}
  T_E=\|\tau_{x_0}[U_E^{(\a)}]\|_{L^{\psharp}(\pa H)}\,\Big/\,\|\tau_{x_0}[U_E^{(\a)}]\|_{L^{\pstar}(H)}
\end{equation}
is independent of $x_0\in\R^n\setminus\ov{H}$, and is such that
\begin{equation}
  \label{PhiH of TE}
  \Phi_H(T_E)=E(n,p)\,.
\end{equation}
The Escobar inequality \eqref{escobar inequality} can be equivalently reformulated in the ``$\Phi$-setting'' as a linear lower bound for $\Phi_H$, i.e.
\begin{equation*}
  \label{escobar inequality equivalent}
  \Phi_H(T)\ge E(n,p)\,T\,,\qquad\forall T\ge0\,.
\end{equation*}
This bound is sharp only if $T=T_E$, and is nearly optimal only if $T$ is close to $T_E$; but it largely suboptimal away from $T_E$, see Figure \ref{fig summary}.

\medskip

\noindent {\bf (3) Euclidean isoperimetry:} It is easily seen that $\Phi_\Om(T)=0$ for some $T>0$ if and only if $|\Om|<\infty$ and $T=\ISO(\Om)^{1/\psharp}$, where $\ISO(\Om)=\H^{n-1}(\pa\Om)/|\Om|^{(n-1)/n}$ stands for the isoperimetric ratio of $\Om$. Since the Euclidean isoperimetric inequality states that
\begin{equation}
  \label{euclidean isoperimetry}
  \ISO(\Om)\ge\ISO(B)\,,
\end{equation}
(with equality if and only if $\Om$ is a ball), in the $\Phi$-setting, \eqref{euclidean isoperimetry} is equivalent to saying that $\Phi_B$ has the left-most zero among all $\Phi_\Om$.

\medskip

\noindent {\bf (4) Balls have the worst best Sobolev inequalities:} In \cite[$p=2$]{carlenloss1,carlenloss2} (by symmetrization methods and conformal invariance) and in \cite[$p\in(1,n)$]{maggivillaniJGA} (via the mass transportation method pioneered in \cite{knothe,Gromov,CENV2004}) it is shown that if $B$ is a ball, then for every $T\in(0,\ISO(B)^{1/\psharp})$ there is a unique $\a>0$ such that
\begin{equation}\label{PhiB characterization}
  \Phi_B(T)=\|\nabla U_S^{(\a)}\|_{L^p(B)}\,\Big /\,\|U_S^{(\a)}\|_{L^\pstar(B)}\,,
\end{equation}
with $U_S$ defined in \eqref{U Sobolev}. Further elaborating on the proof of this partial characterization of $\Phi_B$, again in \cite{maggivillaniJGA} the comparison theorem that {\it balls have the worst best Sobolev inequalities}
\begin{equation}
  \label{mv comparison}
  \Phi_\Om(T)\ge\Phi_B(T)\,,\qquad\forall T\in[0,\ISO(B)^{1/\psharp}]
\end{equation}
is proved. This sharp lower bound, combined with \eqref{PhiB characterization}, allows one to infer some sharp and more traditional-looking Sobolev-type inequalities, like the following sharp interpolation between \eqref{sobolev inequality on Rn} and \eqref{euclidean isoperimetry}
\begin{equation}
\label{sharp interpolation between sobolev and isop}
\frac{\|\nabla u\|_{L^p(\Om)}}{S(n,p)}+\frac{\|u\|_{L^\psharp(\pa\Om)}}{\ISO(B)^{1/\psharp}}\ge\|u\|_{L^\pstar(\Om)}\,,
\end{equation}
and the following sharp Sobolev inequality, additive in the domain of the $p$-Dirichlet energy,
\begin{equation}
  \label{domain additive sharp sobolev}
  \frac{\|\nabla u\|_{L^p(\Om)}^p}{S(n,p)^p}+\frac{\|u\|_{L^\psharp(\pa\Om)}^p}{C(n,p)}\ge\|u\|_{L^\pstar(\Om)}^p\,,
\end{equation}
which was first conjectured by Brezis and Lieb in \cite{brezisliebREMAINDER}.

\medskip

\noindent {\bf (5) Full characterization of $\Phi_H$:} In \cite[for $p=2$]{carlenloss1,carlenloss2} and, again by optimal mass transport arguments, in \cite[for $p\in(1,n)$]{maggineumayer}, it is proven that if $H$ is half-space in $\R^n$, then: for each $T\in(0,T_E)$ (Sobolev range) there is a unique $t_T\in\R$ such that
\begin{equation}
  \label{PhiH sobolev range}
  \Phi_H(T)=\|\nabla(\tau_{t_T\,\nu_H} U_S)\|_{L^p(H)}\,\Big /\,\|\tau_{t_T\,\nu_H} U_S\|_{L^\pstar(H)}\,;
\end{equation}
if $T=T_E$ (Escobar point), then
\begin{equation}
  \label{PhiH escobar}
  \Phi_H(T_E)=E(n,p)=\|\nabla(\tau_{\nu_H} U_E)\|_{L^p(H)}\,\Big /\,\|\tau_{\nu_H} U_E\|_{L^\pstar(H)}\,;
\end{equation}
for each $T>T_E$ (beyond Escobar range), there is a unique $s_T>1$ s.t.
\begin{eqnarray}
  \label{PhiH beyond E range}
  &&\Phi_H(T)=\|\nabla(\tau_{s_T\,\nu_H} U_{BE})\|_{L^p(H)}\,\Big /\,\|\tau_{s_T\,\nu_H} U_{BE}\|_{L^\pstar(H)}\,,
  \\\label{U Beyond Escobar}
  &&\mbox{where}\,\, U_{BE}(x)=(|x|^{p/(p-1)}-1)^{1-(n/p)}\,,\qquad |x|>1\,.
\end{eqnarray}
Up to the natural dilation and translation invariances, these functions are the unique minimizers of $\Phi_H(T)$.  Moreover, again by \cite{maggineumayer}: {\bf (a):} $\inf_{T\ge0}\Phi_H(T)$ is achieved at $T=T_0\in(0,T_E)$, where $t_{T_0}=0$ and $\Phi_H(T_0)=S(n,p)/2^{1/n}$; {\bf (b):} by the divergence theorem, $\Phi_H(T)> T^{\psharp}/\psharp$ for every $T>0$, and this lower bound is sharp as $T\to\infty$; {\bf (c):} finally, $\Phi_H$ has the {\it best} best Sobolev inequality, i.e.
\begin{equation}
  \label{mn comparison}
  \Phi_\Om(T)\le\Phi_H(T)\,,\qquad\forall T\ge0\,.
\end{equation}

\subsection{Statements of the main results}\label{subsection statements} With this summary on the state of the art for $\Phi_\Om$ in mind, there are two fundamental open questions that form the subject of our paper:

\medskip

{\bf Question 1:} When does $\Phi_\Om(T)$ ($T>0$) admit minimizers?

\medskip

{\bf Question 2:} Does rigidity hold in the comparison theorem \eqref{mv comparison}?

\medskip

\noindent The main idea of this paper is attacking these two closely related questions by systematically exploiting the complete characterization of $\Phi_H$ obtained in \cite{maggineumayer}.

\medskip

Concerning Question 1, a classical concentration-compactness argument characterizes the limit behavior of minimizing sequences of $\Phi_\Om(T)$ as the superposition of a standard weak limit plus at most countably many concentration points, located either in the interior of $\Om$, or on its boundary. By exploiting properties of $\Phi_H$ we are able to (i): exclude all interior concentrations and all but {\it at most one} boundary concentration, thus proving existence of minimizers for a suitable ``relaxed problem'' $\Phi_\Om^*(T)$; and (ii): completely exclude concentrations, and thus establish the existence of minimizers of $\Phi_\Om(T)$, as soon as $\pa\Om$ is of class $C^2$ and $n>2p\,$. To give precise statements, it is convenient to let $\X_\Om(T)$ denote the competition class of $\Phi_\Om(T)$, and let  $\Y_\Om(T)$ denote the set of triples $(u,\vv,\tt)$ with either $u\in \X_\Om(T)$ and $\vv=\tt=0$, or $u\in W^{1,p}(\Omega)$, $\vv\in(0,1]$, $\tt\in(0,T]$, and
\begin{equation}
    \label{u aa tt constraint}
\vv^\pstar+\int_\Om u^\pstar=1\,,\qquad \tt^\psharp+\int_{\pa\Om}u^\psharp=T^\psharp\,.
\end{equation}
The relaxed problem associated to $\Phi_\Om(T)$ is then given by
\begin{equation}
    \label{phi star Omega T}
    \Phi_\Om^*(T)=
    \inf_{\Y_\Om(T)}\,\E\,,\,\,\,\mbox{where}\,\,\,\E(u,\vv,\tt)^p=\int_\Om|\nabla u|^p+\vv^p\,\Phi_H\Big(\frac{\tt}{\vv}\Big)^p\,,
\end{equation}
with the convention that $\vv^p\Phi_H(\tt/\vv)^p=0$ if $(\vv,\tt)=(0,0)$.

\begin{theorem}[Existence of minimizers of $\Phi_\Om$]
  \label{theorem main existence}
  If $n\ge 2$, $p\in(1,n)$, and $\Om$ is a bounded open set with $C^1$-boundary in $\R^n$, then:

  \medskip

  \noindent {\bf (i):} for every $T>0$, there is a minimizer $(u,\vv,\tt)$ of $\Phi_\Om^*(T)$, and
  \begin{equation}
  \label{phiomega equal phiomegastar}
  \Phi_\Om(T)=\Phi_\Om^*(T)\,;
  \end{equation}
  moreover, if $\int_\Om u^\pstar>0$, then $u/\|u\|_{L^\pstar(\Om)}$ is a minimizer of $\Phi_\Om(\|u\|_{L^\psharp(\pa\Om)}/\|u\|_{L^\pstar(\Om)})$;

  \medskip

  \noindent {\bf (ii):} if $\Om$ has boundary of class $C^2$, $n>2p$, $T>0$, and $(u,\vv,\tt)$ is a minimizer of $\Phi_\Om^*(T)$, then $\vv=\tt=0$, and thus $u$ is a minimizer of $\Phi_\Om(T)$.
\end{theorem}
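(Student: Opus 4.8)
The plan is to attack both parts by combining a concentration--compactness analysis of minimizing sequences with two soft ``gluing'' constructions; the point is that the complete description of $\Phi_H$ recalled in items (5)(a)--(c) lets us \emph{compute}, not merely estimate, the energy cost of a boundary concentration.

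\emph{Part (i).} The inequality $\Phi_\Om^*(T)\le\Phi_\Om(T)$ is immediate from the embedding $u\mapsto(u,0,0)$. For the rest, I would take a minimizing sequence $u_j\in\X_\Om(T)$ of $\Phi_\Om(T)$. Since $\Om$ is bounded with $C^1$ boundary, the bound on $\|\nabla u_j\|_{L^p(\Om)}$ forces $\{u_j\}$ bounded in $W^{1,p}(\Om)$, so along a subsequence $u_j\weak u$ in $W^{1,p}(\Om)$, $u_j\to u$ in every subcritical $L^q(\Om)$ and $L^q(\pa\Om)$, and $|\nabla u_j|^p\weakstar\mu$, $|u_j|^\pstar\weakstar\nu$ on $\ov\Om$, $|u_j|^\psharp\weakstar\zeta$ on $\pa\Om$ as measures. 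By the boundary version of Lions' concentration--compactness lemma --- which is where $C^1$ enters, blow-ups of $\Om$ at boundary points being half-spaces --- one has $\nu=|u|^\pstar\,dx+\sum_k\nu_k\,\de_{x_k}$, $\zeta=|u|^\psharp\,d\H^{n-1}+\sum_l\zeta_l\,\de_{y_l}$ with $y_l\in\pa\Om$, together with the sharp atomic lower bounds $\mu(\{x_k\})\ge S(n,p)^p\,\nu_k^{p/\pstar}$ at interior atoms $x_k$ (from \eqref{sobolev inequality on Rn}) and $\mu(\{y_l\})\ge F\big(\nu(\{y_l\}),\zeta_l\big)$ at boundary atoms $y_l$, where
\[
F(a,b):=a^{p/\pstar}\,\Phi_H\big(b^{1/\psharp}/a^{1/\pstar}\big)^p\,,\qquad F(a,0):=S(n,p)^p\,a^{p/\pstar}\,,\qquad F(0,0):=0\,;
\]
the bound at $y_l$ comes from blowing $u_j$ up at $y_l$ and using the definition and scaling of $\Phi_H$, and one also rules out (via the trace embedding) the degenerate case of an atom of $\zeta$ carrying no $\nu$-mass. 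As $\Om$ is bounded, no mass escapes, so $1=\int_\Om u^\pstar+\sum_k\nu_k+\sum_l\nu(\{y_l\})$ and $T^\psharp=\int_{\pa\Om}u^\psharp+\sum_l\zeta_l$.

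The crucial step is the \textbf{subadditivity of $F$}, $F\big(\sum_j a_j,\sum_j b_j\big)\le\sum_j F(a_j,b_j)$, which I would prove by a ``far--apart bubbles'' construction: for $\eta>0$, take truncated $\eta$-near optimizers of the problems defining the $F(a_j,b_j)$ on $H$ (the extremal profiles only decay polynomially, so truncation is needed), translate them to mutually distant points of $\pa H$ and sum; supports are essentially disjoint, the cross terms in the $p$-Dirichlet energy are negligible, and the masses and boundary traces add up, giving $F\big(\sum a_j,\sum b_j\big)\le\sum F(a_j,b_j)+C\eta$. Next I record $\Phi_\Om(T)=\Phi_\Om^*(T)$: for $\Phi_\Om(T)\le\Phi_\Om^*(T)$, given any $(u,\vv,\tt)\in\Y_\Om(T)$, transplant a rescaled near-optimizer $U^{(\e)}$ of $\Phi_H(\tt/\vv)$ into $\Om$ near a boundary point ($C^1$ makes the model a half-space) and add a small constraint correction, producing $u_\e\in\X_\Om(T)$ with $\|\nabla u_\e\|_{L^p(\Om)}^p\to\E(u,\vv,\tt)^p$. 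Now set $\vv^\pstar:=\sum_k\nu_k+\sum_l\nu(\{y_l\})$ and $\tt^\psharp:=\sum_l\zeta_l$: then $(u,\vv,\tt)\in\Y_\Om(T)$, $\E(u,\vv,\tt)^p=\int_\Om|\nabla u|^p+F(\vv^\pstar,\tt^\psharp)$, and subadditivity, the atomic bounds and weak lower semicontinuity of the Dirichlet energy give $\E(u,\vv,\tt)^p\le\liminf_j\int_\Om|\nabla u_j|^p=\Phi_\Om(T)^p=\Phi_\Om^*(T)^p$; since $\E(u,\vv,\tt)\ge\Phi_\Om^*(T)$ trivially, $(u,\vv,\tt)$ minimizes $\Phi_\Om^*(T)$ and its concentration is carried by (at most) one boundary point. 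Finally, if $m:=\int_\Om u^\pstar>0$, with $\theta:=\|u\|_{L^\psharp(\pa\Om)}/\|u\|_{L^\pstar(\Om)}$, the homogeneity of $\E$ shows that any $w\in\X_\Om(\theta)$ with $\|\nabla w\|_{L^p(\Om)}^p<m^{-p/\pstar}\|\nabla u\|_{L^p(\Om)}^p$ would make $(m^{1/\pstar}w,\vv,\tt)$ a strictly better competitor for $\Phi_\Om^*(T)$; hence $u/\|u\|_{L^\pstar(\Om)}$ minimizes $\Phi_\Om(\theta)$.

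\emph{Part (ii).} Assume $\pa\Om\in C^2$, $n>2p$, and let $(u,\vv,\tt)$ minimize $\Phi_\Om^*(T)$ with $\vv>0$ (hence $\tt>0$); I would contradict minimality by de-concentrating the bubble. Fix $x_0\in\pa\Om$ and, in $C^2$ boundary coordinates, write $\pa\Om$ near $x_0$ as a graph of $\phi$ with $\phi(0)=0$, $\nabla\phi(0)=0$, $|\phi(x')|\le C|x'|^2$; let $H$ be the blow-up half-space at $x_0$ and let $U$ be the explicit minimizer of the $\Phi_H(\tt/\vv)$ problem from \cite{maggineumayer} (one of $\tau_{t\nu_H}U_S$, $\tau_{\nu_H}U_E$, $\tau_{s\nu_H}U_{BE}$ according to the value of $\tt/\vv$), normalized so that $\|U\|_{L^\pstar(H)}^\pstar=\vv^\pstar$, $\|U\|_{L^\psharp(\pa H)}^\psharp=\tt^\psharp$, $\|\nabla U\|_{L^p(H)}^p=\vv^p\Phi_H(\tt/\vv)^p$. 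Transplant $U^{(\e)}$ into $\Om$ near $x_0$ through the boundary chart (with a cutoff), add it to $u$, and restore \eqref{u aa tt constraint} by an $O(\e)$-correction, landing in $\X_\Om(T)$ at the value $\vv_\e=\tt_\e=0$. The heart of the matter is the expansion, as $\e\to0$,
\[
\int_\Om|\nabla u_\e|^p=\int_\Om|\nabla u|^p+\vv^p\,\Phi_H(\tt/\vv)^p+\big(\text{interaction with }u\big)+\big(\text{curvature of }\pa\Om\text{ at }x_0\big)+o(\e^\g)\,,
\]
for a suitable exponent $\g>0$: the interaction with the non-concentrated part $u$ is controlled because $u$ is bounded near $\pa\Om$ (it solves a PDE of the type \eqref{PDE of generic minimizer}) and localization gains subcriticality, while the curvature contribution is of order $\e$ times the (bounded) second fundamental form of $\pa\Om$ at $x_0$, whose sign can be exploited by choosing $x_0$ well (or which is simply dominated). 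If the two correction terms combine to a strictly negative gain, comparing with $\Phi_\Om^*(T)^p=\int_\Om|\nabla u|^p+\vv^p\Phi_H(\tt/\vv)^p$ contradicts minimality of $(u,\vv,\tt)$. The role of $n>2p$ is exactly to make the ``capacitary'' and truncation errors generated along the way --- all governed by weighted integrals of $U_S^\pstar$, $U_S^\psharp$ and $|\nabla U_S|^p$ against the powers of $|x|$ coming from the quadratic boundary correction --- of order $o(\e^\g)$, hence genuinely lower order than the gain. Therefore $\vv=\tt=0$ and $u$ minimizes $\Phi_\Om(T)$.

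The subadditivity of $F$ in (i) is plausible but must be handled carefully because the extremal profiles decay only polynomially; the genuinely hard point, however, is the sharp energy expansion in (ii), where one has to identify the exact exponent $\g$ and the sign of the leading de-concentration gain and verify that \emph{every} error term --- from the merely $C^2$ boundary, from transplanting a non-compactly supported bubble, and from the merely $W^{1,p}$ background $u$ --- is strictly of lower order, which is precisely the arithmetic behind the hypothesis $n>2p$.
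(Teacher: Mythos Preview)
Your outline of Part (i) is essentially the paper's proof: the same boundary concentration--compactness lemma with the sharp atomic bound $\mu(\{y_l\})\ge\vv_l^p\,\Phi_H(\tt_l/\vv_l)^p$, the same ``far-apart bubbles on $\pa H$'' construction for subadditivity of your $F$, the same transplant-plus-correction argument for $\Phi_\Om(T)\le\Phi_\Om^*(T)$, and the same homogeneity argument for the ``moreover'' clause.

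Part (ii), however, has a genuine gap. You correctly locate the contradiction at the sign of the leading correction in the energy expansion, but you do not supply the mechanism that makes it negative, and this is precisely where the work lies. Transplanting $U_\tau^{(\e)}$ (with $\tau=\tt/\vv$) through a $C^2$ boundary chart at $x_0$ gives
\[
\int_\Om|\nabla v_\e|^p=\|\nabla v\|_{L^p(\Om)}^p+\Phi_H(\tau)^p-{\rm H}_{\pa\Om}(x_0)\,\L(U_\tau)\,\e+o(\e)\,,\quad
\int_\Om v_\e^\pstar=\vv^{-\pstar}-{\rm H}_{\pa\Om}(x_0)\,\M(U_\tau)\,\e+o(\e)\,,
\]
with $\L,\M$ as in \eqref{def of LU}, \eqref{def of Mu}. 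The volume defect is thus of \emph{exact} order $\e$, so your ``$O(\e)$-correction'' to restore the constraints shifts the energy by a term of the same order, and nothing in your sketch decides the sign of the net $\e$-coefficient. The paper handles this by perturbing the bubble itself to $U=U_\tau+b\,\e\,V_\tau$, with $b$ chosen to make the volume defect $o(\e)$; the energy cost of this perturbation is $p\,b\,\l_H(\tau)\,\e$ via the Euler--Lagrange equation \eqref{PDE of UT}, and the net coefficient of $-{\rm H}_{\pa\Om}(x_0)\,\e$ becomes
\[
\L(U_\tau)-\frac{n-p}{n}\,\l_H(\tau)\,\M(U_\tau)\,.
\]
The heart of the proof is the \emph{key inequality} (Lemma~\ref{lemma key}) asserting that this quantity is strictly positive for every $\tau>0$; only then does picking $x_0$ of positive mean curvature yield a strict energy drop. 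Proving this uses the explicit radial structure of $U_\tau$ and a nontrivial computation on spherical caps (Lemma~\ref{lemma 3.17}); it also requires $n>2p-1$ for $\L(U_\tau)$ to even be finite, while the stronger hypothesis $n>2p$ is what allows a choice of cutoff scale $\e^\b$ making all truncation errors $o(\e)$. You flag this sign question as ``the genuinely hard point'' but leave it open; without the key inequality the argument does not close.
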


\begin{remark}
  {\rm Minimizers of $\Phi_\Om(T)$ solve the Euler--Lagrange equation \eqref{PDE of generic minimizer}. For the Euler--Lagrange equation satisfied by minimizers $(u,\vv,\tt)$ of the relaxed problem $\Phi_\Om^*(T)$, see Theorem \ref{theorem main existence part i implies} below.}
\end{remark}

Question 2 is motivated by the various rigidity statements associated to comparison theorems  in Riemannian geometry (see, e.g. \cite{cheegerebinBOOK}). In that setting, a certain model space provides a universal bound on a certain global geometric quantity (comparison theorem), which is then shown to be saturated by the model space alone (rigidity statement). With this analogy in mind, we can reformulate more precisely Question 2 as follows:

\medskip

\noindent {\bf Question 2, weak form:} Does $\Phi_\Om=\Phi_B$ on $(0,\ISO(B)^{1/\psharp})$ imply that $\Om$ is a ball?

\medskip

\noindent {\bf Question 2, strong form:} Does $\Phi_\Om(T)=\Phi_B(T)$ at just one value of $T\in(0,\ISO(B)^{1/\psharp})$ imply that $\Om$ is a ball?
\medskip

\noindent Concerning the {\it weak form} of Question 2, through a careful use of the properties of $\Phi_H$ we answer affirmatively whenever $\Om$ is bounded and connected. These conditions are optimal, as shown by unbounded or disconnected non-rigidity examples presented in \cite{maggivillaniJGA}. In fact, the argument we propose gives rigidity under the mere assumption that $\Phi_\Om=\Phi_B$ holds on an open neighborhood of $T=0$. Concerning the {\it strong form} of Question 2, which was originally formulated in \cite[Section 1.9]{maggivillaniJGA}, we can answer in the affirmative as a direct by-product of our existence result for minimizers of $\Phi_\Om(T)$ (thus, when $\Om$ has $C^2$-boundary and $n>2p$) thanks to the following ``conditional rigidity'' statement, which is proved in \cite{maggivillaniJGA} as a direct by-product of the proof of \eqref{mv comparison}:
\begin{eqnarray}
\nonumber
&&\mbox{{\it if $\Om$ is connected (possibly unbounded)}}\,,
\\
\label{mv criterion for rigidity}
&&\mbox{{\it if $\Phi_\Om(T)=\Phi_B(T)$ for a value of $T\in(0,\ISO(B)^{1/\psharp})$}}\,,
\\\nonumber
&&\mbox{{\it and if $\Phi_\Om(T)$ is known to admit minimizers (possibly just for that $T$)}}\,,
\\\nonumber
&&\mbox{{\it then $\Om$ is a ball}}\,.
\end{eqnarray}
(We notice for future use an important consequence of \eqref{mv criterion for rigidity}, namely, we have
\begin{equation}
  \label{mn comparison for balls}
  \Phi_B(T)<\Phi_H(T)\,,\qquad\forall T\in\big(0,\ISO(B)^{1/\psharp}\big)\,;
\end{equation}
indeed, by \cite{maggineumayer}, $\Phi_H(T)$ admits minimizers for every $T>0$.) With these premises, we now state our main results concerning Question 2.

\medskip

\begin{theorem}[Rigidity of ``Balls have the worst best Sobolev inequalities'']
  \label{theorem main rigidity}
  Let $n\ge 2$, $p\in(1,n)$, $\Om$  an open, bounded, connected set with $C^1$-boundary in $\R^n$, and assume that {\bf one} of the following two conditions holds:
  \medskip

  \noindent {\bf (i):} there is $T_*>0$ such that $\Phi_\Om(T)=\Phi_B(T)$ for every $T\in(0,T_*)$; or

  \medskip

  \noindent {\bf (ii):} $n>2p$, the boundary of $\Omega$ is of class $C^2$, and there is $T\in(0,\ISO(B)^{1/\psharp})$ such that $\Phi_\Om(T)=\Phi_B(T)$.

  \medskip

  \noindent Then, $\Om$ is a ball.
\end{theorem}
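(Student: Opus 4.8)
The plan is to deduce both cases from the conditional rigidity criterion \eqref{mv criterion for rigidity}, for which the only missing ingredient is the existence of minimizers of $\Phi_\Om(T)$ at an appropriate value of $T\in(0,\ISO(B)^{1/\psharp})$. Under hypothesis {\bf (ii)} this is immediate: Theorem \ref{theorem main existence}(ii) furnishes a classical minimizer of $\Phi_\Om(T)$ at the given value of $T$ (here $C^2$-regularity and $n>2p$ are exactly what is needed), and then \eqref{mv criterion for rigidity} applies verbatim since $\Om$ is connected, yielding that $\Om$ is a ball. The work is therefore concentrated in deducing case {\bf (i)}, where we only know $\Phi_\Om=\Phi_B$ on a (possibly tiny) neighborhood $(0,T_*)$ of $0$ and have no regularity assumption beyond $C^1$, so that Theorem \ref{theorem main existence} produces only a generalized minimizer $(u,\vv,\tt)$ of $\Phi_\Om^*(T)$.

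For case {\bf (i)}, first I would fix $T\in(0,T_*)$ and invoke Theorem \ref{theorem main existence}(i) to obtain a minimizer $(u,\vv,\tt)$ of $\Phi_\Om^*(T)$ with $\E(u,\vv,\tt)=\Phi_\Om^*(T)=\Phi_\Om(T)=\Phi_B(T)$. The key is to rule out the concentrated part, i.e.\ to show $\vv=\tt=0$. Suppose not. If $\int_\Om u^\pstar=0$, i.e.\ $(\vv,\tt)=(\s,\tau)$ with $\s^\pstar=1$, then $\E=\vv^p\Phi_H(\tt/\vv)^p$, and by scale invariance $\vv^p\Phi_H(\tt/\vv)^p=\Phi_H(T)^p$, forcing $\Phi_\Om(T)=\Phi_H(T)$, which contradicts \eqref{mn comparison for balls} together with $\Phi_\Om(T)=\Phi_B(T)$. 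If instead $0<\int_\Om u^\pstar<1$, set $m=\|u\|_{L^\pstar(\Om)}\in(0,1)$ and $\bar T=\|u\|_{L^\psharp(\pa\Om)}/m$; by the last assertion of Theorem \ref{theorem main existence}(i), $u/m$ is a genuine minimizer of $\Phi_\Om(\bar T)$. I would then show $\bar T\in(0,T_*)$ — one direction from $\vv>0$ forcing $\|u\|_{L^\psharp(\pa\Om)}^\psharp = T^\psharp-\tt^\psharp < T^\psharp$ and a companion volume estimate giving $\bar T<T<T_*$ after a short computation — so that $\Phi_\Om(\bar T)=\Phi_B(\bar T)$, and $\Phi_\Om(\bar T)$ admits a minimizer. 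Since $\Om$ is connected, \eqref{mv criterion for rigidity} applies at $\bar T$ and we are done. It remains to handle the borderline possibility that $u\equiv 0$ and the concentrated mass exhausts both constraints (the case $\int_\Om u^\pstar=0$ already treated) versus $u\not\equiv0$ but $\bar T$ falling out of $(0,T_*)$ — here I would use the strict subadditivity of $T\mapsto\Phi_\Om(T)^p$-type energy splitting encoded in the definition of $\Phi_\Om^*$ to derive a contradiction with minimality, exactly as in the proof of Theorem \ref{theorem main existence}.

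The main obstacle is the bookkeeping in the case $0<\int_\Om u^\pstar<1$: one must certify that the reduced trace parameter $\bar T$ still lies in the neighborhood $(0,T_*)$ on which the rigidity hypothesis is available. This is plausible because splitting off a concentration point strictly decreases the trace-to-volume ratio when measured against the Escobar-type profile (the half-space minimizers with $\tt/\vv$ small sit in the Sobolev range $T<T_E$ and are ``cheap'' near $T=0$), but making it rigorous requires comparing $(\vv^\pstar,\tt^\psharp)$ against $(1-m^\pstar,\,T^\psharp-(\bar T m)^\psharp)$ and exploiting that $\Phi_H$ near $0$ behaves like the Sobolev profile; a clean way is to note $\E(u,\vv,\tt)^p\ge \int_\Om|\nabla u|^p \ge m^p\,\Phi_\Om(\bar T)^p = m^p\,\Phi_B(\bar T)^p$, and combine this with $\Phi_B(T)^p=\E^p$ and the strict monotonicity/convexity properties of $\Phi_B$ to force $\bar T<T$. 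A second, more minor, subtlety is that hypothesis {\bf (i)} does not assume $T_*\le\ISO(B)^{1/\psharp}$; but since $\Phi_\Om$ vanishes only at $\ISO(\Om)^{1/\psharp}$ and $\Phi_B$ only at $\ISO(B)^{1/\psharp}$, the identity $\Phi_\Om=\Phi_B$ on $(0,T_*)$ together with \eqref{euclidean isoperimetry} already forces consistency, and in any case one only ever uses values of $T$ close to $0$, so this causes no difficulty.
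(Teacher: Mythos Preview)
Your treatment of case \textbf{(ii)} is correct and is exactly what the paper does: Theorem \ref{theorem main existence}(ii) supplies a classical minimizer, and the conditional rigidity criterion \eqref{mv criterion for rigidity} finishes the argument.

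Your treatment of case \textbf{(i)}, however, has a genuine gap at the very step you flag as ``bookkeeping''. You want to show that the reduced trace parameter $\bar T=\tau_T/\nu_T$ lands in $(0,T_*)$ so that the rigidity hypothesis is available at $\bar T$. But both the numerator $\tau_T=\|u\|_{L^{\psharp}(\pa\Om)}<T$ and the denominator $\nu_T=\|u\|_{L^{\pstar}(\Om)}<1$ are strictly smaller than their ``full'' values, so there is no a priori control on their ratio; the inequalities you cite do not combine into $\bar T<T$. Your fallback (``use strict subadditivity as in the proof of Theorem \ref{theorem main existence}'') does not help either, since that proof relies on the $C^2$ Ansatz and $n>2p$, which are not available under hypothesis \textbf{(i)}. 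In fact, the paper's argument shows precisely the \emph{opposite} of what you claim: arguing by contradiction (assuming $\Om$ is not a ball), one has $\bar T\ge T_*$ for \emph{every} $T\in(0,T_*)$, because $u_T/\nu_T$ is a classical minimizer of $\Phi_\Om(\bar T)$ and no classical minimizer can exist in $(0,T_*)$. So your ``short computation'' cannot exist.

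The paper's route for case \textbf{(i)} is quite different and uses ingredients you do not mention. One argues by contradiction and lets $T\to 0^+$. From $\bar T\ge T_*$ one gets $\nu_T\le T/T_*$, hence $\vv_T^{\pstar}\ge 1-(T/T_*)^{\pstar}$ and $\tt_T/\vv_T\to 0$. The crucial step then exploits the explicit Lagrange multipliers of Theorem \ref{theorem main existence part i implies}(iii): the Euler--Lagrange equation for $u_T$ has multipliers $\l=\vv_T^{p-\pstar}\l_H(\tt_T/\vv_T)$ and $\s=\vv_T^{p-\psharp}\s_H(\tt_T/\vv_T)$, and testing with $u_T$ gives
\[
-\s_H\Big(\frac{\tt_T}{\vv_T}\Big)\,\Big(\frac{\tau_T}{\nu_T}\Big)^{\psharp}\le \l_H\Big(\frac{\tt_T}{\vv_T}\Big)\,\Big(\frac{\nu_T}{\vv_T}\Big)^{\pstar-\psharp}\,.
\]
Since $\l_H$ stays bounded near $0$ while $-\s_H(s)\to+\infty$ as $s\to 0^+$ (see \eqref{sigma limits}, \eqref{lambda limits}), this forces $\tau_T/\nu_T\to 0$, contradicting $\tau_T/\nu_T\ge T_*$. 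The two pieces your proposal is missing are (a) the identification \eqref{multipliers gen min equation} of the multipliers with those of $\Phi_H$, and (b) the blow-up $\s_H\to-\infty$ at $0$; without them a single fixed $T$ does not carry enough information, and one must pass to the limit $T\to 0^+$.
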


\subsection{Strategy of proof}\label{subsection strategy of proof} Concentration-compactness arguments and the use of sharp Sobolev-type inequalities (like the Sobolev and Escobar inequalities \eqref{sobolev inequality on Rn} and \eqref{escobar inequality}) are the standard tools of the trade in the analysis of variational problems with critical growth. As seen, if interpreted as assertions about $\Phi_H$, \eqref{sobolev inequality on Rn} and \eqref{escobar inequality} contain only very partial information (respectively, ``$\Phi_H(0)=S(n,p)$'' and ``$\Phi_H(T)\ge E(n,p)\,T$ for every $T>0$''). From this viewpoint, our arguments provide an interesting example of the potentialities of using, in the familiar context of concentration-compactness, the full characterization of $\Phi_H$ obtained in \cite{carlenloss2,maggineumayer}. We now explain how this characterization is used in this paper.

\medskip

We have already mentioned how the mere knowledge of the existence of minimizers in $\Phi_H(T)$ for every $T>0$ allows one to reduce the analysis of concentrations to the simplest possible case of a {\it single} boundary concentration (thus leading to Theorem \ref{theorem main existence}-(i)). Finer properties of $\Phi_H$ are exploited in the proof of Theorem \ref{theorem main existence}-(ii), which goes as follows. We consider the existence of a minimizer $(u,\vv,\tt)$ of $\Phi_\Om^*(T)$ with $\vv>0$, and, keeping in mind that $\Phi_\Om(T)=\Phi_\Om^*(T)$, we aim to obtain a contradiction to $\vv>0$ by constructing a competitor $v$ of $\Phi_\Om(T)$ with
\[
\int_\Om|\nabla v|^p<\int_\Om|\nabla u|^p+\vv^p\,\Phi_H\Big(\frac{\tt}\vv\Big)^p\,.
\]
We seek $v$ in the form $v=u_\e$, for the {\it Ansatz} given by
\begin{equation}
  \label{ansatz}
  u_\e(x)=(1-\vphi_\e(x))\,u(x)+\vphi_\e(x)\,\big(U^{(\e)}\circ g\big)(x)\,,\qquad x\in\Om\,.
\end{equation}
Here $x_0\in\pa\Om$ is a boundary point of $\Om$ with positive mean curvature\footnote{Our convention is that the scalar mean curvature of $\pa\Om$ is computed with respect to the outer unit normal to $\Om$, so that every bounded open set with $C^2$-boundary has at least one boundary point of positive mean curvature.}, i.e. ${\rm H}_{\pa\Om}(x_0)>0$; $\vphi_\e$ is a cut-off function between $B_{\e^\b}(x_0)$ and $B_{2\,\e^\b}(x_0)$ for $\beta=\beta(n,p)\in(0,1)$ to be suitably chosen (the condition $n>2\,p$ enters in this choice); $g$ is a boundary flattening diffeomorphism near $x_0$; and, finally, $U=U_{\tau}+b\,\e\,V_\tau$ for $\tau=\tt/\vv$, $V_\tau$ a standard perturbation of $U_\tau$, and $b$ a constant suitably chosen depending on $n$, $p$, $H_{\pa\Om}(x_0)$ and $\tau$. The energy, volume and trace expansions for $u_\e$ as $\e\to 0^+$ are computed to be
\begin{eqnarray}
\label{veps stima grad intro}
&&\int_{\Om}|\nabla u_\e|^p\le\int_\Om|\nabla u|^p+\vv^p\,\Phi_H\Big(\frac{\tt}\vv\Big)^p
\\\nonumber
&&\hspace{3cm}-\Big\{\L(U_\tau)-\frac{(n-p)}n\,\l_H(\tau)\,\M(U_\tau)\Big\}\,{\rm H}_{\pa \Om}(x_0)\,\vv^p\e+{\rm o}(\e)\,,
\\\label{veps intro correct}
&&\int_{\Om}u_\e^\pstar=1+{\rm o}(\e)\,,
\qquad \int_{\pa \Om}u_\e^\psharp=T^\psharp+{\rm o}(\e)\,,
\end{eqnarray}
where $\l_H(T)$ is the volume Lagrange multiplier of $U_T$ (see \eqref{PDE of UT} below), and where $\L$ and $\M$ are functionals defined on $U:H\to\R$ by
\begin{eqnarray}\label{def of LU}
  &&\L(U)=\int_H x_n\,|\nabla U|^p-p\,x_n\,(\pa_1 U)^2\,|\nabla U|^{p-2}\,,
  \\\label{def of Mu}
  &&\M(U)=\int_H x_n\,U^\pstar\,.
\end{eqnarray}
Modulo ${\rm o}(\e)$-perturbations of $u_\e$ aimed at correcting the volume and trace constraints to the exact values needed for inclusion in $\X_\Om(T)$, we have constructed the required competitors, and proved Theorem \ref{theorem main existence}-(ii), if we can show the existence of $c(n,p,T)>0$ such that
\begin{equation}
  \label{key inequality intro}
  \L(U_T)-\frac{(n-p)}n\,\l_H(T)\,\M(U_T)\ge c(n,p,T)\,,\qquad\forall T>0\,.
\end{equation}
Of course, the full characterization of $\Phi_H$ plays a crucial role in our proof of \eqref{key inequality intro}, see Lemma \ref{lemma key} below.

\medskip

While Theorem \ref{theorem main rigidity}-(ii) is immediate from Theorem \ref{theorem main existence}-(ii) thanks to the rigidity criterion \eqref{mv criterion for rigidity}, the proof of Theorem \ref{theorem main rigidity}-(i) requires an additional argument, which once more exploits several fine properties of $\Phi_B$ and $\Phi_H$: these include \eqref{mv criterion for rigidity}, \eqref{mn comparison for balls}, and information on the signs of the Lagrange multipliers $\l_H(T)$ and $\s_H(T)$ for minimizers $U_T$ of $\Phi_H(T)$ (see \eqref{lambdaT behavior} and \eqref{sigmaT behavior} below).

\subsection{Organization of the paper} After collecting a few preliminary results in section \ref{section prelims}, in section \ref{section boundary concentrations} we study in detail various properties of $\Phi_H$ and of its minimizers: in particular, we prove the key inequality \eqref{key inequality intro} (see Lemma \ref{lemma key}), and discuss in detail the {\it Ansatz} \eqref{ansatz} (see Lemma \ref{lemma boundary concentration}). Sections \ref{section existence generalized} and \ref{section rigidity sobolev} contain, respectively, the proofs of Theorem \ref{theorem main existence} and Theorem \ref{theorem main rigidity}. Finally, we collect some auxiliary, routine proofs in an appendix.

\medskip

\noindent {\bf Acknowledgements:} FM was supported by NSF-DMS RTG 1840314, NSF-DMS FRG 1854344, and NSF-DMS 2000034. RN was supported by NSF-DMS 2200886.

\section{Notation and preparations}\label{section prelims} Some basic notation is presented in section \ref{subsection notation}. We then discuss, in separate subsections, four useful technical lemmas:  a concentration-compactness lemma with boundary terms (Lemma \ref{lemma lions}); a second order expansion for the boundary flattening diffeomorphisms used in the {\it Ansatz} \eqref{ansatz} (Lemma \ref{lemma boundary}); some basic regularity information on minimizers of $\Phi_\Om(T)$ (Lemma~\ref{lemma what about minimizers}); and the basic technique of ``volume/trace correcting variations''  (Lemma \ref{lemma volume fix}). Some proofs are postponed to the appendix.

\subsection{Notation}\label{subsection notation} Throughout the paper we always assume that $n\ge 2$ and $p\in(1,n)$. We denote by $\L^n$ and $\H^k$ the Lebesgue measure and the $k$-dimensional Hausdorff measure of $\R^n$, although we simply set $|E|$ in place of $\L^n(E)$. We denote by $B_r(x)$ the open ball of center $x\in\R^n$ and radius $r>0$, and set $B_r=B_r(0)$, while $B$ denotes a ball of unspecified center and radius.

\medskip

Following a standard shorthand notation, by ``$f(x)=g(x)+{\rm O}_{a,b}(|x|)$ for $|x|>R$'' we mean that $|f(x)-g(x)|\le C(a,b)\,|x|$ if $|x|>R$; by ``$f(x)=g(x)+{\rm o}_{a,b}(|x|)$ as $|x|\to 0$'' we mean that $\lim_{|x|\to 0}|f(x)-g(x)|/|x|=0$ at a rate that is uniform with respect to the parameters $a$ and $b$.
\medskip

In general, we will use capital letters (e.g. $U, V, \Psi$) to denote functions defined on the half space $H$ and lowercase letters (e.g. $u,v,\vphi$) to denote functions defined on an open bounded domain $\Omega$.

\subsection{Concentration-compactness}\label{subsection cc} The following lemma is a  version of  Lions' celebrated
 concentration-compactness lemma and provides a natural starting point to study minimizing sequences of $\Phi_\Om(T)$.

\begin{lemma}[Concentration-compactness]\label{lemma lions} Let $n\ge 2$, $p\in(1,n)$,  and let $\Om\subset\R^n$ be open and bounded with $C^1$-boundary. If $\{u_j\}_j$ is a sequence in $L^1_{\rm loc}(\Om)$, $\{\nabla u_j\}_j$ is bounded in $L^p(\Om;\R^n)$ and $u_j\weak u$ as distributions in $\Om$, then the Radon measures on $\R^n$ defined by
\begin{eqnarray}\label{def of muj nuj tauj}
  \mu_j=|\nabla u_j|^p\,\L^n\llcorner\Om\,,\qquad \nu_j=|u_j|^\pstar\,\L^n\llcorner\Om\,,\qquad \tau_j=|u_j|^{\psharp}\,\H^{n-1}\llcorner\pa\Om\,,
\end{eqnarray}
have subsequential weak-star limits $\mu$, $\nu$ and $\tau$ which satisfy
\begin{eqnarray}
\label{nu}
\nu\!&=&\!\vert u\vert ^{\pstar}\mathcal{L}^n\llcorner\Om+\sum_{i\in I}\vv_i^\pstar\,\delta_{x_i}\,,
\\
\label{tau}
\tau\!&=&\!\vert u\vert ^{\psharp}\mathcal{H}^{n-1}\llcorner\partial\Om+\sum_{i\in I}\tt_i^\psharp\,\delta_{x_i}\,,
\\
\label{mu}
\mu\!&\geq&\!\vert\nabla u\vert ^p\mathcal{L}^n\llcorner\Om+\sum_{i\in I}\gg_i^p\,\delta_{x_i},
\end{eqnarray}
where $\{x_i\}_{i\in I}\subset\ov\Om$ is at most countable set, $\vv_i>0$ and $\tt_i\ge0$ for every $i\in I$,  $\tt_i>0$ only if $x_i\in\pa\Om$, and
\begin{equation}
  \label{mu lower bound}
  \gg_i\geq\vv_i\,\Phi_H\Big(\frac{\tt_i}{\vv_i}\Big)\,,\qquad\forall i\in I\,.
\end{equation}
In particular, $\gg_i\ge S\,\vv_i$ whenever $x_i\in\Om$.
\end{lemma}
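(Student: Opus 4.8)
\textbf{Proof strategy for Lemma \ref{lemma lions}.}

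The plan is to follow the classical structure of Lions' concentration-compactness argument, but carefully tracking the boundary trace measures $\tau_j$ and exploiting the scale-invariant family $\Phi_H$ to identify the energy defect at each concentration point. First I would extract subsequences so that $\mu_j \weakstar \mu$, $\nu_j \weakstar \nu$, $\tau_j \weakstar \tau$ as Radon measures on $\R^n$ (all masses are uniformly bounded: $\mu_j(\R^n)$ by hypothesis, and $\nu_j(\R^n)$, $\tau_j(\R^n)$ via the trace-Sobolev embedding on the bounded $C^1$ domain $\Om$, after normalizing $u_j$ to have, say, zero average on a fixed ball — or more simply by noting that the relevant quantities are translation-invariant and the $W^{1,p}$-seminorm bound plus $C^1$-regularity of $\pa\Om$ controls both $\|u_j\|_{L^\pstar(\Om)}$ and $\|u_j\|_{L^\psharp(\pa\Om)}$ up to an additive constant we may subtract off). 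Since $\nabla u_j$ is bounded in $L^p$ and $u_j \weak u$ distributionally, after a further subsequence $u_j \to u$ in $L^q_{\rm loc}(\Om)$ for $q<\pstar$ and a.e.; Fatou gives $|\nabla u|^p \L^n\llcorner\Om \le \mu$, and the a.e. convergence together with the $C^1$ trace theory gives that the absolutely continuous parts of $\nu$ and $\tau$ are exactly $|u|^\pstar \L^n\llcorner\Om$ and $|u|^\psharp \H^{n-1}\llcorner\pa\Om$. The Brezis--Lieb lemma then shows $\nu - |u|^\pstar\L^n\llcorner\Om$ and $\tau - |u|^\psharp\H^{n-1}\llcorner\pa\Om$ are nonnegative measures carried by the ``bad set'' where mass concentrates.

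Next I would show these defect measures are purely atomic. Apply the sharp Sobolev-trace inequality encoded by $\Phi_\Om$ (equivalently, the scale-invariant inequality on $\R^n$ and half-spaces): for every $\vphi \in C^1_c(\R^n)$, applying the definition of $\Phi_\Om$ — or rather a localized version — to $\vphi\,u_j$ and passing to the limit yields an inequality of the form
\[
\Big(\int |\vphi|^\pstar\, d\nu\Big)^{1/\pstar} \lesssim \Big(\int |\vphi|^p\, d\mu\Big)^{1/p} + (\text{lower order terms in } u),
\]
together with the companion trace bound. A standard argument (testing with $\vphi$ supported near a point and using reverse Hölder / the fact that the exponents $\pstar, \psharp > p$) then forces the singular part of $\mu$ to be a countable sum of atoms $\sum_i \gg_i^p \delta_{x_i}$, and likewise $\nu = |u|^\pstar\L^n\llcorner\Om + \sum_i \vv_i^\pstar \delta_{x_i}$ and $\tau = |u|^\psharp\H^{n-1}\llcorner\pa\Om + \sum_i \tt_i^\psharp \delta_{x_i}$ over a common at most countable index set $\{x_i\}\subset\ov\Om$, with $\vv_i>0$, and $\tt_i>0$ only if $x_i\in\pa\Om$ (since $\tau_j$ lives on $\pa\Om$, its weak-star limit is supported there, so interior atoms carry no trace mass).

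The final and most delicate step is the lower bound \eqref{mu lower bound}, $\gg_i \ge \vv_i\,\Phi_H(\tt_i/\vv_i)$. I would localize near a fixed $x_i$: choose cutoffs $\vphi_\rho$ that are $1$ on $B_\rho(x_i)$ and supported in $B_{2\rho}(x_i)$, evaluate $\mu(B_\rho(x_i)) \ge \gg_i^p$, $\nu(B_\rho(x_i)) \to \vv_i^\pstar$, $\tau(B_\rho(x_i)) \to \tt_i^\psharp$ as $\rho \to 0$, and then rescale: setting $v_j(y) = \rho^{n/\pstar} u_j(x_i + \rho\, y)$ (adjusted for the flattening of $\pa\Om$ near $x_i$ via the $C^1$-chart, which converges to the identity after rescaling, turning $\Om$ locally into a half-space $H$ if $x_i \in \pa\Om$, or into all of $\R^n$ if $x_i\in\Om$), the concentrated mass becomes a competitor, in the limit, for $\Phi_H(\tt_i/\vv_i)$ after normalization — or for $S(n,p) = \Phi_{\R^n}(0)$ in the interior case, which gives $\gg_i \ge S\,\vv_i$ since $\Phi_H(0)=S$. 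The bookkeeping here — making the rescaled domains converge to $H$ in a way compatible with the trace constraint, and handling the possibility that several $x_i$ coincide in the limit (they don't, after passing to subsequences and a diagonal argument over finitely many atoms of largest mass) — is where the real work lies; this is the main obstacle. The inequality $\Phi_H(0)=S$ together with $\Phi_H \ge 0$ and the definition of $\Phi_H$ as an infimum make the limiting comparison clean once the geometric convergence is set up. I expect this step to occupy the bulk of the proof, with the atomicity and the identification of the a.c. parts being comparatively routine applications of Brezis--Lieb and the $C^1$ trace theory.
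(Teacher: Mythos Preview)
Your overall strategy is correct and matches the paper's proof: extract weak-star limits, use Brezis--Lieb to identify the absolutely continuous parts of $\nu$ and $\tau$, derive a reverse-H\"older inequality $\tilde\nu(B_r)\le C\,\tilde\mu(B_r)^{\pstar/p}$ by testing the Sobolev/trace inequalities with $\eta(u_j-u)$, and use Radon--Nikodym differentiation to conclude that the defect measures are purely atomic and supported on the atoms of $\mu$. The paper also records the intermediate fact $\tilde\mu(\{x\})=\mu(\{x\})$ (where $\tilde\mu$ is the weak-star limit of $|\nabla(u_j-u)|^p\L^n\llcorner\Om$), which you leave implicit but is straightforward.

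Where you diverge is in obtaining the sharp bound \eqref{mu lower bound}. You propose a blow-up: rescale $v_j(y)=\rho^{n/\pstar}u_j(x_i+\rho y)$ and argue that the concentrated mass ``becomes a competitor'' for $\Phi_H(\tt_i/\vv_i)$; as you acknowledge, making this precise requires a diagonal choice of $\rho=\rho_j$ and careful geometric bookkeeping, and you rightly flag it as the main obstacle. The paper sidesteps this entirely. For $x_i\in\pa\Om$ it pulls $\eta(u_j-u)$ back to $H$ via the $C^1$-flattening $\hat f$ of Lemma~\ref{lemma boundary 0}, applies the \emph{definition} of $\Phi_H$ directly to the resulting function on $H$ (no rescaling is needed, since $\Phi_H$ is scale-invariant), passes to the limit $j\to\infty$, and then shrinks $\spt\eta$; the Jacobians of $\hat f$ are $1+o(1)$ on small neighborhoods, so the change of variables contributes only negligible errors. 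For interior $x_i\in\Om$ the argument is the same with $\eta\in C^\infty_c(\Om)$ and the sharp Sobolev constant $1/S$ replacing $C_1$. This direct route is substantially shorter and removes exactly the obstacle you identified; your rescaling idea can in principle be pushed through, but it is a detour around a door that is already open.
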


\begin{proof}
  See appendix \ref{appendix lions}.
\end{proof}

\subsection{Near-boundary coordinates}\label{subsection coordinates}
In this section, we introduce two types of coordinates for a neighborhood of a boundary point of a domain $\Omega$: one that requires minimal regularity of the boundary of $\Omega$ and will suffice in the proofs of Theorem~\ref{theorem main existence}{\bf (i)} and Theorem~\ref{theorem main rigidity}{\bf (i)}, and a second that requires $C^{2}$ regularity of the boundary of $\Omega$ and will be used in the proof of  Theorem~\ref{theorem main existence}{\bf (ii)} and Theorem~\ref{theorem main rigidity}{\bf (ii)}.

\medskip

Given an open set $\Om$ with $C^1$-boundary, we denote by $\nu_\Om$ its outer unit normal and by $T_x(\pa\Om)$ the tangent space to $x\in\pa\Om$. When $\Om$ has $C^2$-boundary, we denote by ${\rm A}_{\Om}$ and ${\rm H}_\Om$ the second fundamental form and the scalar mean curvature of $\pa\Om$ defined by $\nu_\Om$.
To define coordinates near boundary points of $\Om$, for  $x\in\R^n$ we set $\pp(x)=x-x_n\,e_n$, $\DD_r=\{x:x_n=0\,,|\pp x|<r\}$, and $\CC_r=\big\{x:|x_n|<r\,,|\pp(x)|<r\big\}$. In particular, if $\Om$ is an open set with $C^1$-boundary such that
\begin{equation}
  \label{Omega C1 hp 1}
  0\in\pa\Om\,,\qquad T_0\,(\pa\Om)=\{x_n=0\}\,,\qquad\nu_{\Om}(0)=-e_n\,,
\end{equation}
then we can find $r_0>0$ and $\ell:\DD_{r_0}\to(-r_0,r_0)$ such that $\ell(0)=0$, $\nabla\ell(0)=0$, and
\begin{eqnarray*}
\Om\cap\CC_{r_0}\!\!\!&=&\!\!\!\big\{x+t\,e_n:x\in\DD_{r_0}\,,r_0>t>\ell(x)\big\}\,,
\\
(\pa\Om)\cap\CC_{r_0}\!\!\!&=&\!\!\!\big\{x+\ell(x)\,e_n:x\in\DD_{r_0}\big\}\,.
\end{eqnarray*}
We then define the maps $F:\DD_{r_0}\to\pa\Om$, $f:\CC_{r_0}\to\R^n$ and $\hat{f}:\CC_{r_0}\to\R^n$ by setting
\begin{eqnarray}\label{def of F}
F(x)\!\!\!&=&\!\!\!x+\ell(x)\,e_n\,,\qquad \qquad  \qquad \quad \, x\in\DD_{r_0}\,,
\\
\label{def of f hat}
\hat{f}(x)\!\!\!&=&\!\!\!F(\pp x)+x_ne_n \,,\qquad \qquad \qquad \!  x\in\CC_{r_0}\,.
\\
\label{def of f}
f(x)\!\!\!&=&\!\!\!F(\pp x)-x_n\,\nu_\Om(F(\pp x))\,,\qquad x\in\CC_{r_0}\,.
\end{eqnarray}
In this way, for every $y\in(\pa\Om)\cap\CC_{r_0}$, if we set $y=F(\pp x)$, then
\[
\nu_\Om(y)=\frac{\nabla\ell(x)-e_n}{\sqrt{1+|\nabla\ell(x)|^2}}\,,\qquad
{\rm H}_\Om(y)=\Div\Big(\frac{\nabla\ell}{\sqrt{1+|\nabla\ell|^2}}\Big)(x)\,.
\]

Notice that the map $f$ need not be of class $C^1$ if the boundary of $\Omega$ is only of class $C^1$, while the map $\hat{f}$ will be as regular as the boundary of $\Omega$. The following lemma summarizes basic properties about the map $\hat{f}$.

\begin{lemma}[Near-boundary coordinates, one]
  \label{lemma boundary 0}
   If $H=\{x_n>0\}$, $\Om$ is an open set with $C^1$-boundary and \eqref{Omega C1 hp 1} holds,
   then there exist $r_0$ and $C_0$ positive such that the map $\hat{f}$ in \eqref{def of f hat} defines a $C^1$-diffeomorphism from $\CC_{r_0}$ to its image,
   taking $\DD_{r_0}$ into $\pa\Om$ and with
  \begin{eqnarray}
    \label{f hat inclusions}
    \hat{f}(\CC_{r/C_0}\cap H)\subset \Om\cap B_r\subset \hat{f}(\CC_{C_0\,r}\cap H)\qquad\forall r<r_0/C_0\,.
  \end{eqnarray}
  Moreover, letting $\hat{g}=\hat{f}^{-1}$ denote the inverse of $\hat{f}$, we have
  \begin{eqnarray}\label{f C1 estimates}
    &&\nabla \hat{f}=\Id_{\R^n}+{\rm o}(1)\,,\qquad (\nabla \hat{g})^*\circ \hat{f}=\Id_{\R^n}+{\rm o}(1)\,,
    \\
    \label{f C1 estimate for Jf}
    &&J\hat{f}=1+{\rm o}(1)\,,\qquad \qquad  1\le J^{\pa H}\hat{f}\le 1+{\rm o}(1)\,,\quad\quad \mbox{for $x\in\CC_{r_0}$.}
  \end{eqnarray}
  The orders in \eqref{f C1 estimates} and \eqref{f C1 estimate for Jf} depend on $\pa\Om$ and on $0\in\pa\Om$.
\end{lemma}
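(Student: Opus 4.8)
The plan is to unwind Definitions \eqref{def of F} and \eqref{def of f hat} and observe that, up to relabeling coordinates, $\hat f$ is simply the graph map of $\ell$, so that every assertion reduces to an elementary computation. First I would record that, under \eqref{Omega C1 hp 1}, the $C^1$-regularity of $\pa\Om$ furnishes $r_0>0$ and $\ell\in C^1(\DD_{r_0};(-r_0,r_0))$ with $\ell(0)=0$ and $\nabla\ell(0)=0$; after possibly shrinking $r_0$, I may and will assume that $L:=\sup_{\DD_{r_0}}|\nabla\ell|$ is as small as needed (in particular $L\le1$). Since $\pp(\pp x)=\pp x$, composing \eqref{def of F} with \eqref{def of f hat} gives the closed form $\hat f(x)=x+\ell(\pp x)\,e_n$ on $\CC_{r_0}$; the map $\hat g(y):=y-\ell(\pp y)\,e_n$ then satisfies $\hat g\circ\hat f=\id=\hat f\circ\hat g$ on $\CC_{r_0}$ (using $\pp\circ\hat f=\pp=\pp\circ\hat g$), and since $\hat f,\hat g\in C^1$ this exhibits $\hat f$ as a $C^1$-diffeomorphism of $\CC_{r_0}$ onto its image. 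For $x\in\DD_{r_0}$ one has $x_n=0$, hence $\hat f(x)=F(\pp x)=F(x)\in\pa\Om$, so $\hat f$ carries $\DD_{r_0}$ into $\pa\Om$.

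Next I would differentiate. As $\ell(\pp x)$ is independent of $x_n$, one finds $\nabla\hat f=\Id_{\R^n}+e_n\otimes w$ and $(\nabla\hat g)^*\circ\hat f=\Id_{\R^n}-w\otimes e_n$, where $w=w(x)$ has entries $\pa_i\ell(\pp x)$ for $i<n$ and $w_n=0$; continuity of $\nabla\ell$ together with $\nabla\ell(0)=0$ give $w(x)\to0$ as $x\to0$, which is \eqref{f C1 estimates}. Moreover $\nabla\hat f$ is lower triangular with unit diagonal, so $J\hat f=|\det\nabla\hat f|=1$, in particular $J\hat f=1+{\rm o}(1)$; and the restriction of $\hat f$ to $\pa H=\{x_n=0\}$ is precisely the graph map $F$, whose tangential Jacobian equals $\sqrt{1+|\nabla\ell(\pp x)|^2}\in[1,1+{\rm o}(1)]$, so $1\le J^{\pa H}\hat f\le1+{\rm o}(1)$. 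This gives \eqref{f C1 estimate for Jf}, with all orders controlled by the modulus of continuity of $\nabla\ell$ at $0$, hence by $\pa\Om$ and $0\in\pa\Om$.

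Finally, for the inclusions \eqref{f hat inclusions} I would use the bound $|\hat f(x)-x|=|\ell(\pp x)|\le L\,|\pp x|\le L\,|x|$ together with the fact that $\hat f$ respects the decomposition of $\CC_{r_0}$ induced by $\pa\Om$: from the graph descriptions of $\Om\cap\CC_{r_0}$ and $(\pa\Om)\cap\CC_{r_0}$, if $x_n>0$ and $|x_n|+|\ell(\pp x)|<r_0$ then $\hat f(x)=F(\pp x)+x_n\,e_n\in\Om$, and symmetrically $\hat g(y)$ has positive last coordinate whenever $y\in\Om\cap\CC_{r_0}$. Taking $C_0$ large (e.g. $C_0=3$), for $x\in\CC_{r/C_0}\cap H$ with $r<r_0/C_0$ one has $|x|<\sqrt2\,r/C_0$, hence $|\hat f(x)|\le(1+L)|x|<r$ and $\hat f(x)\in\Om$, which is the first inclusion; conversely, for $y\in\Om\cap B_r\subset\Om\cap\CC_{r_0}$ one has $\hat g(y)\in H$ with $|\pp(\hat g(y))|=|\pp y|<r$ and $|(\hat g(y))_n|\le(1+L)|y|<C_0\,r$, so $y=\hat f(\hat g(y))\in\hat f(\CC_{C_0 r}\cap H)$, which is the second inclusion. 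I do not anticipate any real obstacle; the points deserving a little care are bookkeeping the mismatch between the cylinder $\CC_r$ and the ball $B_r$ in \eqref{f hat inclusions}, checking that $\hat f$ maps $\{x_n>0\}$ into $\Om$ (so that the first inclusion truly lands in $\Om$ and not merely in $B_r$), and pinning down once and for all that the ${\rm o}(1)$ terms refer to $x\to0$ within $\CC_{r_0}$.
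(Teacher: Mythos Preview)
Your proof is correct and follows essentially the same approach as the paper: both write $\hat f(x)=x+\ell(\pp x)e_n$ explicitly, exhibit the inverse $\hat g(y)=y-\ell(\pp y)e_n$, compute $\nabla\hat f$ as the identity plus a rank-one nilpotent perturbation, and use $J^{\pa H}\hat f=\sqrt{1+|\nabla\ell|^2}$. Your argument is in fact slightly sharper (you observe $J\hat f\equiv1$ exactly from the lower-triangular structure, whereas the paper invokes a determinant expansion) and more detailed on the inclusions \eqref{f hat inclusions}, which the paper only sketches.
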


\begin{proof}
	See appendix \ref{appendix boundary}.
\end{proof}

The map $f$ defined in \eqref{def of f} has the advantage that, when the boundary of $\Omega$ is at least of class $C^2$,  curvature quantities appear in expansions of the metric coefficients and the volume form in these coordinates. These properties are the content of the following lemma.

\begin{lemma}[Near-boundary coordinates, two]
  \label{lemma boundary}
   If $H=\{x_n>0\}$, $\Om$ is an open set with $C^{2}$-boundary and \eqref{Omega C1 hp 1} holds,
   then there exist $r_0$ and $C_0$ positive such that the map $f$ in \eqref{def of f} defines a $C^1$-diffeomorphism from $\CC_{r_0}$ to its image,
   taking $\DD_{r_0}$ into $\pa\Om$ and with
  \begin{eqnarray}
    \label{f inclusions}
    f(\CC_{r/C_0}\cap H)\subset \Om\cap B_r\subset f(\CC_{C_0\,r}\cap H)\qquad\forall r<r_0/C_0\,.
  \end{eqnarray}
  Moreover, for $x\in\CC_{r_0}$ and $x \in \DD_{r_0}$ respectively, we have
  \begin{eqnarray}
  \label{f C2 estimate for Jacobian}
  &&\!\!\!\!\!\!\!\!\!\!\!\!\!\!\!\!\!\!\!\!\!\!Jf(x)=1-x_n\,{\rm H}_\Om(0)+{\rm O}(|x|^2)\,,
   \qquad 1 \le J^{\pa H} f (x) \le 1+{\rm O}(|x|^2)\,,
  \end{eqnarray}
  and if $\{e_i\}_{i=1}^{n-1}$ is an orthonormal basis of $\R^{n-1}\subset \R^n$ of eigenvectors of $\nabla^2\ell(0)$ and $\{\k_i\}_{i=1}^{n-1}$ denote the corresponding eigenvalues (so that, by \eqref{Omega C1 hp 1}, they are the principal curvatures of $\pa\Om$ with respect to $\nu_\Om$ computed at $0\in\pa\Om$, and in particular ${\rm H}_\Om(0)=\sum_{i=1}^{n-1}\k_i$), then, letting $g=f^{-1}$ denote the inverse of $f$, we have
  \begin{eqnarray}\label{f C2 estimate for nabla g}
  &&\!\!\!\!\!\!\!\!\!\!\!\!\!\!\!\!\!\!\!\!\!\!\!(\nabla g)^*\circ f=\Id_{\R^n}\!+\big(\nabla\ell\otimes e_n\!-\!e_n\otimes\nabla\ell)+x_n\sum_{i=1}^{n-1}\k_ie_i\otimes e_i+\!{\rm O}(|x|^2)\,.
  \end{eqnarray}
 The orders in \eqref{f C2 estimate for Jacobian} and  \eqref{f C2 estimate for nabla g} depend on $\pa\Om$ and on $0\in\pa\Om$.
\end{lemma}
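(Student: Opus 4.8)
The plan is to work directly from the definition $f(x)=F(\pp x)-x_n\,\nu_\Om(F(\pp x))$ and Taylor-expand every ingredient to second order in $|x|$, using the $C^2$-regularity of $\pa\Om$ together with the normalization $\ell(0)=0$, $\nabla\ell(0)=0$, which makes $\nabla^2\ell(0)$ the shape operator at $0$. First I would record the elementary expansions $\ell(\pp x)=\tfrac12\,\pp x^\top\nabla^2\ell(0)\,\pp x+{\rm o}(|x|^2)$, $\nabla\ell(\pp x)=\nabla^2\ell(0)\,\pp x+{\rm o}(|x|^2)$, $F(\pp x)=\pp x+\ell(\pp x)\,e_n$, and, from the formula $\nu_\Om(F(\pp x))=(\nabla\ell(\pp x)-e_n)/\sqrt{1+|\nabla\ell(\pp x)|^2}$, the expansion $\nu_\Om(F(\pp x))=-e_n+\nabla\ell(\pp x)+{\rm O}(|x|^2)$ (the correction from the square-root is ${\rm O}(|\nabla\ell|^2)={\rm O}(|x|^2)$). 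Substituting gives
\[
f(x)=\pp x+x_n\,e_n+\ell(\pp x)\,e_n - x_n\,\nabla\ell(\pp x)+{\rm O}(|x|^2)
= x + {\rm O}(|x|^2),
\]
which already shows $f$ is a $C^1$-diffeomorphism near $0$ with $\nabla f(0)=\Id$; shrinking $r_0$ and choosing $C_0$ large then yields the inclusions \eqref{f inclusions} by the same ball-comparison argument used for $\hat f$ in Lemma 2.8.

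Next I would compute $\nabla f$ to first order. Differentiating the displayed expression, the $i$-th column ($i<n$) of $\nabla f$ is $e_i+\pa_i\ell(\pp x)\,e_n-x_n\,\pa_i(\nabla\ell)(\pp x)+{\rm O}(|x|^2)=e_i+(\nabla^2\ell(0)\pp x)_i\,e_n - x_n\,\nabla^2\ell(0)\,e_i+{\rm O}(|x|^2)$, and the $n$-th column is $e_n-\nabla\ell(\pp x)+{\rm O}(|x|^2)=e_n-\nabla^2\ell(0)\,\pp x+{\rm O}(|x|^2)$. In the eigenbasis $\{e_i\}$ of $\nabla^2\ell(0)$ with eigenvalues $\k_i$, this reads $\nabla f=\Id+\big(e_n\otimes\nabla\ell-\nabla\ell\otimes e_n\big)-x_n\sum_{i=1}^{n-1}\k_i\,e_i\otimes e_i+{\rm O}(|x|^2)$. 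Since $\nabla g\circ f=(\nabla f)^{-1}$ and the perturbation is ${\rm O}(|x|)$, inverting via the Neumann series $(\Id+A)^{-1}=\Id-A+{\rm O}(|A|^2)$ and then transposing (the antisymmetric block $e_n\otimes\nabla\ell-\nabla\ell\otimes e_n$ changes sign, the symmetric block does not) gives exactly \eqref{f C2 estimate for nabla g}: $(\nabla g)^*\circ f=\Id+(\nabla\ell\otimes e_n-e_n\otimes\nabla\ell)+x_n\sum_{i}\k_i\,e_i\otimes e_i+{\rm O}(|x|^2)$.

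For \eqref{f C2 estimate for Jacobian}, I would take $Jf=\det\nabla f$; with $\nabla f=\Id+M$, $M={\rm O}(|x|)$, we have $\det(\Id+M)=1+\tr M+{\rm O}(|x|^2)$, and $\tr M=\tr\big(e_n\otimes\nabla\ell-\nabla\ell\otimes e_n\big)-x_n\sum_i\k_i$. The first trace vanishes (it equals $\nabla\ell\cdot e_n-e_n\cdot\nabla\ell=0$), so $Jf=1-x_n\sum_i\k_i+{\rm O}(|x|^2)=1-x_n\,{\rm H}_\Om(0)+{\rm O}(|x|^2)$. The tangential Jacobian $J^{\pa H}f$ on $\DD_{r_0}$ (where $x_n=0$) is the square root of the determinant of the first fundamental form $g_{ij}=\pa_i F\cdot\pa_j F=\de_{ij}+\pa_i\ell\,\pa_j\ell$; since $\pa_i\ell(\pp x)={\rm O}(|x|)$ this is $\de_{ij}+{\rm O}(|x|^2)$, giving $1\le J^{\pa H}f\le 1+{\rm O}(|x|^2)$, where the left inequality is the general fact that adding a rank-one positive semidefinite perturbation to $\Id$ increases the determinant. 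Throughout, the implicit constants depend only on bounds for $\ell$ and $\nabla^2\ell$ near $0$, i.e. on $\pa\Om$ and the point $0\in\pa\Om$, as claimed.

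The routine bookkeeping — keeping track of which terms are genuinely ${\rm O}(|x|^2)$ and which survive at order $|x|$ — is the only real care needed; the one conceptually delicate point is making sure the antisymmetric block in $\nabla f$ is reproduced with the correct sign after inverting and transposing, since it is precisely that sign flip that matches the statement \eqref{f C2 estimate for nabla g}. I do not anticipate a genuine obstacle, as this is a second-order Taylor computation of a Fermi-type coordinate system; the main thing to get right is that $\ell$ contributes only at order $|x|^2$ (through $\nabla\ell={\rm O}(|x|)$ multiplying $x_n$) in the metric expansion, which is exactly why the curvature terms $\k_i e_i\otimes e_i$ appear linearly in $x_n$.
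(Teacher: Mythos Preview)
Your plan is correct and is essentially the paper's own proof: both compute $\nabla f$ by differentiating the defining expression (using the expansion $\nu_\Om\circ F=-e_n+\nabla\ell+{\rm O}(|x|^2)$), then read off $Jf$ via the first-order determinant expansion, $(\nabla g)^*\circ f$ via Neumann inversion and transposition, and $J^{\pa H}f$ from the first fundamental form $g_{ij}=\de_{ij}+\pa_i\ell\,\pa_j\ell$. The only cosmetic difference is that the paper packages the computation of $\pa_i f$ through the Weingarten relation $\pa_i\nu=-A_i^{\,j}\tau_j$ and then identifies $A_i^{\,j}(0)=-\k_i\de_i^{\,j}$, whereas you Taylor-expand $\nu_\Om$ directly; both routes yield the same $\nabla f=\Id+(e_n\otimes\nabla\ell-\nabla\ell\otimes e_n)-x_n\sum_i\k_i\,e_i\otimes e_i+{\rm O}(|x|^2)$.
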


\begin{proof}
  See appendix \ref{appendix boundary}.
\end{proof}

\begin{remark}\label{remark fx0}
  {\rm Given $x_0\in\pa\Om$, we denote by $\pi_{x_0}$ the rigid motion of $\R^n$ that maps $x_0$ to $0$ such that \eqref{Omega C1 hp 1} holds with $\pi_{x_0}(\Om)$ in place of $\Om$. Then we set, for $\hat{f}$ and  $f$ defined as in \eqref{def of f hat} and \eqref{def of f} respectively but with $\pi_{x_0}(\Om)$ in place of $\Om$,
  \[
  \hat{f}_{x_0}=\pi_{x_0}^{-1}\circ \hat{f}\,, \qquad   f_{x_0}=\pi_{x_0}^{-1}\circ f\,.
  \]
  Clearly
  these maps are diffeomorphisms on $\CC_{r_0}$, mapping $H\cap\CC_{r_0}$ into a neighborhood of $x_0$ in $\Om$ and $\DD_{r_0}=(\pa H)\cap\CC_{r_0}$ into a neighborhood of $x_0$ in $\pa\Om$, and satisfies proper reformulations of the estimates in Lemmas \ref{lemma boundary 0} and  \ref{lemma boundary}. Here $r_0$ and $C_0$ depend also on the choice of $x_0$, and can of course be assumed uniform across $x_0\in\pa\Om$ if $\pa\Om$ is bounded.}
\end{remark}

\medskip

\subsection{Properties of minimizers}\label{subsection minimizers of PhiOmega} The following lemma gathers some fundamental properties of minimizers of $\Phi_\Om$ that will be needed in the sequel.

\begin{lemma}\label{lemma what about minimizers}
  If $n\ge 2$, $p\in(1,n)$, $T>0$, $\Om$ is a bounded open set with $C^2$-boundary, and $u$ is a minimizer of $\Phi_\Om(T)$, then $u$ is bounded and Lipschitz continuous in $\Omega$, there are $\l$ and $\s$ such that the Euler--Lagrange equation \eqref{PDE of generic minimizer} holds in the weak sense, and the balance condition
  \begin{equation}
    \label{balance condition}
    \l\,\int_\Om u^{\pstar-1}+\s\,\int_{\pa\Om} u^{\psharp-1}=0\,,
  \end{equation}
  holds.
\end{lemma}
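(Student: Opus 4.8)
The plan is to establish the three assertions of Lemma~\ref{lemma what about minimizers} in the order: (a) boundedness, (b) the Euler--Lagrange equation with Lagrange multipliers and Lipschitz regularity, and (c) the balance condition.

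\medskip

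\noindent\textbf{Step 1: boundedness.} First I would note that $u$ is a weak solution of a quasilinear equation with critical-growth right-hand side and critical-growth Neumann data, so $L^\infty$-bounds follow from a Moser/De Giorgi iteration adapted to the trace term. The standard approach (cf. the work of Cherrier, and the treatments in Nazaret and in \cite{maggineumayer}) is to test the Euler--Lagrange equation with truncated powers $\min\{|u|,k\}^{\b(p-1)}u$, bound the resulting boundary integral using the trace Sobolev inequality on $\Om$ together with the fact that $|u|^{\psharp}\in L^1(\pa\Om)$ with small tails, and iterate on the exponent $\b$. The passage from the PDE to these test-function identities presupposes the Euler--Lagrange equation, so strictly one first derives the equation in the weak sense for $u\in W^{1,p}(\Om)$ by the Lagrange multiplier rule (the constraints $\int_\Om|u|^\pstar=1$ and $\int_{\pa\Om}|u|^\psharp=T^\psharp$ have linearly independent differentials at a minimizer as long as $T>0$, which is why we assume $T>0$), and only afterwards upgrades integrability. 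This is the step I expect to be the main technical obstacle, since handling the critical boundary nonlinearity in the iteration requires the usual ``absorb the small-tail part'' argument and care with the geometry of $\pa\Om$; the $C^2$-hypothesis on $\pa\Om$ gives the needed trace inequalities and local flattening.

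\medskip

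\noindent\textbf{Step 2: regularity.} Once $u\in L^\infty(\Om)$, the right-hand side $\l u^{\pstar-1}$ is bounded, so interior $C^{1,\a}_{\rm loc}$ regularity follows from the DiBenedetto--Tolksdorf theory for the $p$-Laplacian with bounded data. For boundary regularity up to $\pa\Om$, flatten the boundary using the $C^1$ (indeed $C^2$) diffeomorphisms of Lemma~\ref{lemma boundary 0}/\ref{lemma boundary}, so that $u$ solves a uniformly elliptic quasilinear conormal problem with bounded coefficients and bounded data; Lieberman's global $C^{1,\a}$ estimates for such oblique-derivative problems then give $u\in C^{1,\a}(\overline\Om)$, in particular $u$ is Lipschitz on $\Om$. (For the statement of the lemma one only needs Lipschitz continuity, so it suffices to quote the global gradient bound.)

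\medskip

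\noindent\textbf{Step 3: the balance condition.} This is the soft part. The homogeneities $\pstar=np/(n-p)$ and $\psharp=(n-1)p/(n-p)$ make $\Phi_\Om$ scale-invariant under $u\mapsto u^{(\a)}$ as recorded in \eqref{def of tau x0 and of rho alfa}; testing stationarity against this one-parameter family, or equivalently using the Pohozaev-type first variation in the dilation parameter, produces a linear relation among $\int_\Om|\nabla u|^p$, $\l$, $\s$ and the constraint values. Concretely, I would compute $\frac{d}{d\a}\big|_{\a=1}$ of the energy and of the two constraints along $\a\mapsto u^{(\a)}$: since $\int_\Om|u^{(\a)}|^\pstar$ is, by the choice of exponent $n/\pstar$ in the definition of $v^{(\a)}$, \emph{independent} of $\a$ when $\Om=\R^n$, but on a bounded $\Om$ one must also move the domain, the cleaner route is to test the weak formulation \eqref{PDE of generic minimizer} with the admissible variation $u$ itself: $\int_\Om|\nabla u|^p = \l\int_\Om u^{\pstar} + \s\int_{\pa\Om} u^{\psharp} = \l + \s\,T^\psharp$. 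That is one identity; a second, independent identity comes from testing with the generator of the scaling $x\cdot\nabla u + \frac{n-p}{p}u$ (a Pohozaev/Rellich identity), and subtracting the two yields precisely \eqref{balance condition}, because the bulk Dirichlet terms cancel by the criticality of $\pstar$ and the boundary bulk terms cancel by the criticality of $\psharp$, leaving only $\l\int_\Om u^{\pstar-1}+\s\int_{\pa\Om}u^{\psharp-1}=0$ after dividing through. I would carry out this computation carefully, using the $C^{1,\a}(\overline\Om)$ regularity from Step~2 to justify the integrations by parts and the boundary integrals; this is the point where the regularity established above is actually used. The only mild subtlety is keeping track of the coefficient bookkeeping so that the $\int_\Om|\nabla u|^p$, $\l\,\pstar^{-1}$-type terms and the mean-curvature-free boundary terms cancel exactly; no further structure of $\Om$ is needed beyond $C^2$-regularity.
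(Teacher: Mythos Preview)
Your Steps~1 and~2 essentially match the paper's approach: derive the Euler--Lagrange equation \eqref{PDE of generic minimizer} by a Lagrange multiplier argument, obtain $u\in L^\infty(\Om)$ by a Moser iteration adapted to the critical boundary nonlinearity, and then invoke Lieberman's boundary $C^{1,\a}$-theory to get Lipschitz regularity up to $\pa\Om$. (The paper cites the Moser iteration of Marino--Winkert and then Lieberman's result, exactly in this order.)

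Step~3, however, is both over-engineered and problematic. The balance condition \eqref{balance condition} is an \emph{immediate} consequence of the weak Euler--Lagrange equation: once you know
\[
\int_\Om|\nabla u|^{p-2}\nabla u\cdot\nabla\vphi=\l\int_\Om u^{\pstar-1}\vphi+\s\int_{\pa\Om}u^{\psharp-1}\vphi\qquad\forall\,\vphi\in W^{1,p}(\Om)\,,
\]
you simply test with the constant function $\vphi\equiv 1$ (which lies in $W^{1,p}(\Om)$ since $\Om$ is bounded). The left-hand side vanishes because $\nabla\vphi=0$, and you read off \eqref{balance condition}. No Pohozaev identity, no scaling variation, and no $C^{1,\a}$-regularity is needed for this step. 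Your proposed route via $x\cdot\nabla u+\tfrac{n-p}{p}u$ would, on a generic bounded $C^2$-domain, introduce boundary terms of the form $\int_{\pa\Om}(x\cdot\nu_\Om)\,|\nabla u|^p$ and similar expressions that do \emph{not} cancel by criticality of $\pstar$ or $\psharp$; those cancellations occur on $\R^n$ or on half-spaces, but not on arbitrary $\Om$. So the Pohozaev approach would not yield \eqref{balance condition}, and in any case is unnecessary.
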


\begin{proof} By a standard argument, based on similar considerations to the one presented in Lemma~\ref{lemma volume fix} below, one sees that a minimizer $u$ of $\Phi_{\Omega}(T)$ is a $W^{1,p}(\Om)$-distributional solution of the Euler-Lagrange equation \eqref{PDE of generic minimizer} for some $\l,\s\in\R$. As soon as $\Omega$ is bounded and has Lipschitz boundary, one can exploit \eqref{PDE of generic minimizer} in conjunction with a Moser iteration argument to prove that $u \in L^\infty(\Om)$ (see, e.g. \cite[Theorem 3.1]{MarinoWinkertMoser}; their result applies to \eqref{PDE of generic minimizer} by taking, in the notation of their paper,  $\mathcal{A}(x,u,\na u ) =|\na u|^{p-2}\na u$, $\mathcal{B}(x,u,\na u ) = \lambda u^{\pstar-1}$, and $\mathcal{C}(x,u) =\sigma u^{\psharp -1}$). On further assuming that $\pa \Omega$ is of class $C^2$, then the classical result \cite[Theorem 1.7]{Lieberman1992} can be applied to deduce that $u \in C^{1,\beta}(\overline{\Omega})$ for a suitable $\beta = \beta(n,p)\in(0,1)$ (for more details, see \cite[Theorem 3.9]{MarinoWinkertMoser}). In particular, $u$ is bounded and Lipschitz continuous on $\Om$, as claimed.
\end{proof}

\subsection{Volume/trace correcting variations}\label{subsection volume fix} At various stages in our arguments we will need to slightly modify certain competitors so to restore the volume and trace constraints defining $\X_\Om(T)$. The following lemma describes the basic mechanism used to this end.

\begin{lemma}[Volume/trace correcting variations]
  \label{lemma volume fix}
  If $n\ge 2$, $p\in(1,n)$, $M>0$, $\Om$ is an open set with $C^1$-boundary, $v\in L^1_{\rm loc}(\Om)$ with $\nabla v\in L^p(\Om;\R^n)$, and if $x_0\in\R^n$ and $r>0$ are such that
  \begin{equation}
    \label{are positive}
  \mbox{$\int_{\Om\setminus B_r(x_0)}v^\pstar$ and $\int_{(\pa\Om)\setminus B_{r}(x_0)} v^\psharp$ are positive and finite}\,,
  \end{equation}
  then there exist positive constants $\eta$ and $C$, and functions $\vphi\in C^\infty_c(\R^n\setminus\ov{B}_r(x_0))$ and $\psi\in C^\infty_c(\Om\setminus\ov{B}_r(x_0))$, all depending on $n$, $p$, $v$ and $M$ only, and with the following property.

  \medskip

  If $\{v_\e\}_{\e<\e_0}\subset L^1_{{\rm loc}}(\Om)$ is such that, for every $\e<\e_0$,
  \begin{equation}
    \label{hps on veps}
      \mbox{$v_\e=v$ on $\Om\setminus B_r(x_0)$}\,,\qquad\int_\Om|\nabla v_\e|^p\le M\,,
  \end{equation}
  then for every $(a,b)$ with $|a|,|b|<\eta/C$, we can find $(s,t)$ with $|s|,|t|<\eta$ such that
  \[
  w_\e=v_\e+s\,\vphi+t\,\psi
  \]
  satisfies
  \begin{equation}
    \label{vt fix trace and volume fixed}
    \int_{\pa\Om} |w_\e|^\psharp =a+\int_{\pa\Om}|v_\e|^\psharp\,,\qquad  \int_\Om |w_\e|^\pstar=b+\int_\Om|v_\e|^\pstar\,,
  \end{equation}
  \begin{equation}
    \label{vt fix cost for the gradient}
    \Big|\int_\Om|\nabla w_\e|^p-\int_\Om|\nabla v_\e |^p\Big|\le C\,\big(|a|+|b|\big)\,.
  \end{equation}
\end{lemma}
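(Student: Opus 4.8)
\textbf{Proof plan for Lemma \ref{lemma volume fix}.}

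The plan is to fix, once and for all, two smooth test functions $\vphi$ and $\psi$ that allow us to independently tune the boundary trace and the interior volume of a competitor, and then to invoke the inverse function theorem on the map $(s,t)\mapsto(\text{trace},\text{volume})$ of $v_\e+s\vphi+t\psi$. First I would choose a radius $r'>r$ and disjoint open sets in $\Om\setminus\ov{B}_r(x_0)$ where $v$ contributes positively to the boundary trace and to the interior volume respectively: because of \eqref{are positive}, we can pick $x_1\in(\pa\Om)\setminus\ov{B}_r(x_0)$ and $x_2\in\Om\setminus\ov{B}_r(x_0)$ together with radii $\rho_1,\rho_2>0$ small enough that $B_{\rho_1}(x_1)$ and $B_{\rho_2}(x_2)$ are disjoint, disjoint from $\ov{B}_r(x_0)$, with $B_{\rho_2}(x_2)\cc\Om$, and such that $\int_{(\pa\Om)\cap B_{\rho_1}(x_1)}v^\psharp>0$ and $\int_{B_{\rho_2}(x_2)}v^\pstar>0$. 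We then take $\vphi\in C^\infty_c(B_{\rho_1}(x_1))$ and $\psi\in C^\infty_c(B_{\rho_2}(x_2))$ nonnegative, not identically zero, with $\psi$ additionally supported in $\Om$ so that $\psi\in C^\infty_c(\Om\setminus\ov{B}_r(x_0))$ and $\vphi\in C^\infty_c(\R^n\setminus\ov{B}_r(x_0))$ as required; the key point is that $\psi$ has \emph{no} boundary trace (it vanishes near $\pa\Om$), so the trace of $v_\e+s\vphi+t\psi$ depends only on $s$, while both $s$ and $t$ affect the volume.

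Next, set, for $|s|,|t|<\eta$ (with $\eta$ to be chosen) and any admissible $v_\e$,
\begin{equation*}
\Psi_\e(s,t)=\Big(\int_{\pa\Om}|v_\e+s\vphi+t\psi|^\psharp-\int_{\pa\Om}|v_\e|^\psharp\,,\ \int_\Om|v_\e+s\vphi+t\psi|^\pstar-\int_\Om|v_\e|^\pstar\Big)\,.
\end{equation*}
Since $v_\e=v$ on the supports of $\vphi$ and $\psi$ (which lie in $\Om\setminus B_r(x_0)$), the map $\Psi_\e$ does \emph{not depend on }$\e$ at all: it equals a fixed map $\Psi(s,t)$ determined by $v$, $\vphi$, $\psi$ alone. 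One checks $\Psi\in C^1$ near the origin (the integrands $|v+s\vphi+t\psi|^\psharp$ and $|v+s\vphi+t\psi|^\pstar$ are dominated, for $s,t$ small, by an integrable function since $v\in L^\psharp$ near $x_1$ and $v\in L^\pstar$ near $x_2$, and differentiate smoothly in $(s,t)$), with $\Psi(0,0)=0$ and Jacobian at the origin
\begin{equation*}
D\Psi(0,0)=\begin{pmatrix}\psharp\int_{\pa\Om}|v|^{\psharp-2}v\,\vphi & 0\\[4pt] \pstar\int_\Om|v|^{\pstar-2}v\,\vphi & \pstar\int_\Om|v|^{\pstar-2}v\,\psi\end{pmatrix}\,.
\end{equation*}
(If $v$ is allowed to change sign, one replaces $\vphi,\psi$ by $|v|^{2-\psharp}\vphi$, $|v|^{2-\pstar}\psi$ or simply chooses them supported where $v>0$, which is possible since the relevant integrals of $v^\psharp$, $v^\pstar$ are positive; this makes the diagonal entries strictly positive.) Thus $D\Psi(0,0)$ is lower triangular with nonzero diagonal entries, hence invertible, and by the inverse function theorem there are $\eta,\eta'>0$ and a $C^1$ inverse $(s,t)=\Psi^{-1}(a,b)$ defined for $|a|,|b|<\eta'$ with values $|s|,|t|<\eta$; shrinking, we get a constant $C$ with $|s|,|t|\le C(|a|+|b|)$ on $|a|,|b|<\eta/C$.

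Finally, with $w_\e=v_\e+s\vphi+t\psi$ and $(s,t)=\Psi^{-1}(a,b)$, the constraint identities \eqref{vt fix trace and volume fixed} hold by construction, since $\Psi$ is exactly the prescribed increments map. For \eqref{vt fix cost for the gradient}, write $\nabla w_\e=\nabla v_\e+s\nabla\vphi+t\nabla\psi$ and expand: using $|a+b|^p\le|a|^p+C_p(|a|^{p-1}|b|+|b|^p)$ together with $\int_\Om|\nabla v_\e|^p\le M$ and Hölder's inequality, we get
\begin{equation*}
\Big|\int_\Om|\nabla w_\e|^p-\int_\Om|\nabla v_\e|^p\Big|\le C(n,p,M,v)\,(|s|+|t|)\le C'\,(|a|+|b|)\,,
\end{equation*}
where in the first inequality we also used that $s\nabla\vphi+t\nabla\psi$ is bounded in $L^p$ with norm $\lesssim|s|+|t|$ and that the $M$-bound controls the cross terms; the second inequality is the Lipschitz estimate for $\Psi^{-1}$. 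The constant $C$ depends only on $n$, $p$, $v$, $M$, and so do $\vphi,\psi,\eta$, as claimed. The one point requiring a little care — the ``main obstacle'' — is the uniformity in $\e$: this is handled by the observation that $\Psi_\e$ is literally independent of $\e$ because $v_\e\equiv v$ on the (fixed) supports of $\vphi$ and $\psi$, so all quantitative constants coming from the inverse function theorem are $\e$-independent; the $M$-bound in \eqref{hps on veps} is what keeps the gradient estimate \eqref{vt fix cost for the gradient} uniform as well.
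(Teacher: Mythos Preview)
Your proposal is correct and follows essentially the same approach as the paper: choose two smooth test functions so that the Jacobian of the trace/volume increment map at the origin is invertible, apply the inverse function theorem, and control the gradient via an elementary $|\cdot|^p$ expansion and H\"older. The only cosmetic differences are that the paper subtracts a multiple of $\psi$ from its first test function to make the Jacobian \emph{diagonal} (equal to $\mathrm{diag}(\psharp,\pstar)$) rather than lower triangular as in your disjoint-support construction, and that the paper phrases the argument as ``inverse function theorem uniformly in $\e$'' rather than your cleaner observation that $\Psi_\e$ is literally $\e$-independent on the fixed supports of $\vphi,\psi$.
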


\begin{proof}
  By \eqref{are positive} there are  $\xi\in C^\infty_c(\R^n\setminus\ov{B}_r(x_0))$ and $\psi\in C^\infty_c(\Om\setminus\ov{B}_r(x_0))$ such that
  \begin{equation}
    \label{vt fix choice 1}
      \int_{\pa\Om}v^{\psharp-1}\,\xi =1\,,\qquad\int_{\Om}v^{\pstar-1}\,\psi=1\,.
  \end{equation}
  Setting $\vphi=\xi-(\int_\Om v^{\pstar-1}\xi)\,\psi$, we have $\vphi\in C^\infty_c(\R^n\setminus\ov{B}_r(x_0))$ with
  \begin{equation}
        \label{vt fix choice 2}
          \int_{\Om}v^{\pstar-1}\,\vphi =0\,,\qquad  \int_{\pa\Om}v^{\psharp-1}\,\vphi =1\,.
  \end{equation}
  We now define $h_\e:\R^2\to\R^2$ by
  \[
  h_\e(s,t)\!=\!\Big(\int_{\pa\Om} |v_\e+s\,\vphi+t\,\psi|^\psharp -\!\int_{\pa\Om}|v_\e|^\psharp  ,\ \int_\Om |v_\e+s\,\vphi+t\,\psi|^\pstar -\!\int_\Om|v_\e|^\pstar  \Big)\,.
  \]
  By \eqref{hps on veps} we have $h_\e\in C^{1,\a}(\R^2;\R^2)$ for some $\a=\a(n,p)\in(0,1)$, with
  \[
  \sup_{\e<\e_0}\|h_\e\|_{C^{1,\a}(\R^2;\R^2)}<\infty\,;
  \]
  moreover, $h_\e(0,0)=0$ and, by \eqref{vt fix choice 1} and \eqref{vt fix choice 2},
  \[
  \nabla h_\e(0,0)=\left(\begin{array}{c c c}
    \psharp\,\int_{\pa\Om}|v_\e|^{\psharp-1}\,\vphi \, & &\psharp\,\int_{\pa\Om}|v_\e|^{\psharp-1}\,\psi \,
    \\
    &
    \\
    \pstar\,\int_{\Om}|v_\e|^{\pstar-1}\,\vphi &    & \pstar\,\int_{\Om}|v_\e|^{\pstar-1}\,\psi\,
      \end{array}\right)=\Big(\begin{array}{c c}
    \psharp & 0
    \\
    0 & \pstar
  \end{array}\Big)\,.
  \]
  We can thus apply the inverse function theorem {\it uniformly} in $\e$, to find positive constants $\eta$ and $C_1$  depending on $n$, $p$, and $v$ so that each $h_\e$ is invertible on $E=\{(s,t):|s|,|t|<\eta\}$, with $\{(a,b):|a|,|b|<\eta/C_1\}\subset h_\e(E)$, and $\nabla h_\e^{-1}(a,b)\ge\Id_{2\times 2}/C_1$ (in the sense of positive definite matrices) for every $(a,b)\in h_\e(E)$. In particular, if we let $(s,t)=h_\e^{-1}(a,b)$ for a pair $(a,b)$ with $|a|,|b|<\eta/C_1$, then the function $w_\e=v_\e+s\,\vphi+t\,\psi$ satisfies \eqref{vt fix trace and volume fixed} and $|(a,b)|=|h_\e^{-1}(s,t)|\ge|(s,t)|/C_1$. Moreover, by the elementary inequality
  \[
  \big||X+Y|^p-|X|^p\big|\le p\,\max\big\{|X|,|Y|\big\}^{p-1}\,|Y|\,\qquad\forall X,Y\in\R^n\,,
  \]
  we see that, setting $\g=\max\big\{|\nabla v_\e|,|\nabla\vphi|,|\nabla\psi|\big\}^p$,
  \begin{equation}\label{eqn: const fix energy term}
  	\begin{split}
  	 \Big|\int_\Om|\nabla w_\e|^p&-\int_\Om|\nabla v_\e|^p\Big|\le p\,\int_\Om \g^{(p-1)/p}\,\big(|s|\,|\nabla\vphi|+|t|\,|\nabla\psi|\big)
  \\
  &\le C\,\Big(\int_\Om \g \, \Big)^{(p-1)/p}\,\Big(\int_\Om \big(|s|^p\,|\nabla\vphi|^p+|t|^p\,|\nabla\psi|^p\big) \Big)^{1/p}
  \\
  &\le C\,|(s,t)|\le C_2\,|(a,b)|\,,
  	\end{split}
  \end{equation}
  for a constant $C_2$ depending on $n$, $p$, $v$, and $M$. Letting $C = \max \{ C_1, C_2\}$ concludes the proof of the lemma.
\end{proof}

\section{Boundary concentrations}\label{section boundary concentrations}

\subsection{Properties of $\Phi_H$-minimizers}\label{subsection PhiH} We recall some facts proved in \cite{maggineumayer} about $\Phi_H$ and its minimizers. Recall that we denote by $T_0$ the minimum point of $\Phi_H$, so that
\begin{equation}
\label{properties of T0}
T_0\in(0,T_E)\,,\qquad \Phi_H(T_0)=2^{-1/n}\,S(n,p)\,,
\end{equation}
where $T_E$ is the ``Escobar trace'' defined in \eqref{def of TE}. If we set $H=\{x_n>0\}$, the minimizers of $U_T$ of $\Phi_H(T)$ for $T>0$ are characterized (modulo the obvious scaling and translation invariance of $\Phi_H$) as
\begin{eqnarray}
  \label{UT}
  U_T(x)=c_T\!\!\left\{
\begin{split}
  &\tau_{t_T\,e_n}U_S(x)=(1+|x-t_T\,e_n|^{p'})^{1-(n/p)}\,,\qquad\!\!\!\!\hspace{0.3cm}T\in(0,T_E)\,,
  \\
  &\tau_{e_n}U_E(x)=|x+e_n|^{(p-n)/(p-1)}\,,\qquad\!\!\!\!\hspace{1.7cm} T=T_E\,,
  \\
  &\tau_{s_T\,e_n}U_{BE}(x)=(|x-s_T\,e_n|^{p'}-1)^{1-(n/p)}\,,\!\!\!\!\qquad T>T_E\,,
\end{split}
\right .
\end{eqnarray}
where the constants $c_T$, $t_T$ and $s_T$ are chosen in such a way that
\begin{equation}
  \label{UT volume trace and energy}
  \int_H\, U_T^{p^\star}=1\,,\qquad  \int_{\pa H}\, U_T^{p^\sharp}=T^{p^\sharp}\,,\qquad\int_H|\nabla U_T|^p=\Phi_H(T)^p\,.
\end{equation}
It is convenient to keep in mind that the various formulas for $U_T$ listed in \eqref{UT} all share the same decay behavior at infinity, that is (see \eqref{UT decay pointwise} below), we have
\begin{equation}\label{eqn: decay first part}
U_T(x) \sim |x|^{(p-n)/(p-1)}, \qquad |\na U_T| \sim |x|^{(1-n)/(p-1)} \qquad \text{ as }  |x|\to \infty	.
\end{equation}
(where the rate depends on the specific value of $T$ under consideration). The constants $t_T$ and $s_T$ have the following properties: $T\in(0,T_E)\mapsto t_T$ is continuous and strictly decreasing, with $t_T>0$ if and only if $T\in (0,T_0)$, and
\begin{equation}
  \label{properties of tT}
  \lim_{T\to 0^+}t_T=+\infty\,,\qquad \lim_{t\to (T_E)^-}t_T=-\infty\,,\qquad t_{T_0}=0\,,
\end{equation}
while $T\in(T_E,\infty)\mapsto s_T$ is continuous, negative, strictly increasing, with
\begin{equation}
  \label{properties of sT}
\lim_{T\to (T_E)^+}s_T=-\infty\,,\qquad\lim_{T\to+\infty}s_T=-1\,.
\end{equation}
Denoting by $\Delta_pv=\Div(|\nabla v|^{p-2}\nabla v)$ the $p\,$-Laplace operator, we have
\begin{equation}
  \label{PDE of UT}
  \left\{
  \begin{split}
    &-\Delta_p U_T=\l_H(T)\,U_T^{p^\star-1}\,\quad\hspace{1cm}\mbox{on $H$}\,,
    \\
    &|\nabla U_T|^{p-2}\,\frac{\pa U_T}{\pa\nu_H}=\s_H(T)\,U_T^{p^\sharp-1}\,\quad\mbox{on $\pa H$}\,,
  \end{split}\right .\qquad\forall T>0\,,
\end{equation}
where $\l_H,\s_H:(0,\infty)\to\R$ are continuous and satisfy the relations
\begin{equation}\label{lambdaT sigmaT basic identities}
  \Phi_H(T)^p=\l_H(T)+\s_H(T)\,T^\psharp\,,\qquad  \s_H(T)=\frac{\Phi_H(T)^{p-1}\,\Phi_H'(T)}{T^{\psharp-1}}\,,
\end{equation}
(see \cite[Lemma 3.3]{maggineumayer}\footnote{Notice that in \cite[(3.16)]{maggineumayer} it is incorrectly stated that $\s_H(T)=\Phi_H(T)^{p-1}\,\Phi_H'(T)/(\psharp\,T^{\psharp-1})$, where the extra $1/\psharp$-factor is wrongly introduced in the penultimate displayed equation in the proof of Lemma 3.3, where $\tau'(0)=T^{1-\psharp}/\psharp$ should be replaced by $\tau'(0)=T^{1-\psharp}$. This error is inconsequential for the arguments in \cite{maggineumayer}, since this expression for $\s_H(T)$ is only used in equation (3.22) and subsequent displayed equations, and since, in all these subsequent identities, a generic multiplicative factor $c(n,p)$ is used (in particular, two functions of $(n,p)$ differing by a $1/\psharp$-factor are both $c(n,p)$). It will instead be important in the proof of \eqref{multipliers gen min equation} to work with correct expression for $\s_H(T)$.}) as well as
\begin{eqnarray}
  \label{sigma limits}
  \lim_{T\to 0^+}\s_H(T)=-\infty\,,\qquad \lim_{T\to +\infty}\s_H(T)=+\infty
\\
\label{lambda limits}
\lim_{T\to 0^+}\l_H(T)>0\,,\qquad \lim_{T\to+\infty}\l_H(T)=-\infty\,.
\end{eqnarray}
The signs of $\s_H$ and $\l_H$ can be easily deduced from \eqref{UT}, and satisfy
\begin{eqnarray}
\label{sigmaT behavior}
&&(0,T_0)=\{\s_H<0\}\,,\qquad  (T_0,\infty)=\{\s_H>0\}\,,\qquad\s_H(T_0)=0\,,
\\
\label{lambdaT behavior}
&&(0,T_E)=\{\l_H>0\}\,,\qquad (T_E,\infty)=\{\l_H<0\}\,,\qquad\l_H(T_E)=0\,.
\end{eqnarray}
\medskip

\subsection{A key inequality and further properties of $U_T$}  In this section, we prove the key inequality \eqref{key inequality} for the functions $\L$ and $\M$ introduced in \eqref{def of LU} and \eqref{def of Mu}, namely
\begin{eqnarray*}
  &&\L(U)=\int_H x_n\,|\nabla U|^p-p\,x_n\,(\pa_1 U)^2\,|\nabla U|^{p-2}\,,
  \\
  &&\M(U)=\int_H x_n\,U^\pstar\,.
\end{eqnarray*}
Whenever $U$ satisfies the decay properties \eqref{eqn: decay first part} (e.g., when $U$ is a compactly supported perturbation of some $U_T$), we have that $\M(U)<\infty$; however, $\L(U)<\infty$ under \eqref{eqn: decay first part} if and only if $n>2\,p-1$; see \eqref{UT decay pstar first moment}  and \eqref{UT decay grad p first moment} below.

\begin{lemma}[Key inequality]
  \label{lemma key} If $n\ge 2$, $p\in(1,n)$, $n>2p-1$, and $T>0$, then there is a positive constant $c(n,p,T)$ such that
  \begin{eqnarray}
  \label{key inequality}
   \L(U_T) -\frac{n-p}n\,\l_H(T)\,\M(U_T) \geq c(n,p,T)\,.
  \end{eqnarray}
\end{lemma}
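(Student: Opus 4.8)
\textbf{Proof plan for Lemma \ref{lemma key}.}

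The strategy is to compute $\L(U_T)$ and $\M(U_T)$ explicitly enough to verify the inequality, and the crucial observation is that the combination $\L(U_T)-\tfrac{n-p}n\,\l_H(T)\,\M(U_T)$ should be recognizable as a derivative of $\Phi_H$ (or of some related scale-covariant quantity) evaluated along the one-parameter family $U_T$, or as a manifestly positive integral. First I would exploit the fact that translating $U_T$ in the $e_n$-direction produces admissible competitors for the half-space problems $\Phi_H(T')$ at nearby trace values $T'$; differentiating the energy of $\tau_{t\,e_n}U_T$ in $t$ at $t=0$ relates the first moment $\int_H x_n\,|\nabla U_T|^p$-type quantities to $\Phi_H$ and its multipliers. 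More precisely, I would test the PDE \eqref{PDE of UT} against the vector field $x_n\,e_n$ (or against $x\cdot\nabla U_T$ and $x_n\,\partial_n U_T$ separately), integrating by parts to generate Pohozaev-type identities on $H$ with boundary contributions on $\partial H$. The term $\int_H x_n\,|\nabla U_T|^p$ and $\int_H x_n\,(\partial_1 U_T)^2|\nabla U_T|^{p-2}$ both arise naturally this way, and the multiplier $\l_H(T)$ enters through the right-hand side $\l_H(T)\,U_T^{\pstar-1}$, producing exactly the moment $\M(U_T)=\int_H x_n\,U_T^\pstar$ after one more integration by parts. The rotational symmetry of $U_T$ about the $x_n$-axis lets me replace $(\partial_1 U_T)^2$ by $\tfrac1{n-1}\bigl(|\nabla U_T|^2-(\partial_n U_T)^2\bigr)$, which simplifies the $\L$-integrand.

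Once the Pohozaev bookkeeping is done I expect to land on an identity of the shape
\[
\L(U_T)-\frac{n-p}n\,\l_H(T)\,\M(U_T)=a(n,p)\int_{\partial H}U_T^{\psharp}\,\bigl(\text{something}\bigr)+b(n,p)\int_H x_n\,(\partial_n U_T)^2|\nabla U_T|^{p-2}
\]
or, better, directly a multiple of $\Phi_H(T)^p$, of $T^{\psharp}\,\s_H(T)$-type terms, or of a strictly positive integral. The sign then needs to be argued case by case according to whether $T\in(0,T_E)$, $T=T_E$, or $T>T_E$, using the explicit profiles in \eqref{UT}, the sign information \eqref{sigmaT behavior}--\eqref{lambdaT behavior}, and the relations \eqref{lambdaT sigmaT basic identities}. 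In the Sobolev range $T<T_E$ one has $\l_H(T)>0$ but $\M(U_T)$ is finite and controlled, and the translate $\tau_{t_T e_n}U_S$ has its "center of mass" on the correct side, which should make the left-hand side positive; at $T=T_E$ one has $\l_H=0$ and the inequality reduces to $\L(U_{T_E})>0$, to be checked directly from $U_E(x+e_n)=|x+e_n|^{(p-n)/(p-1)}$; for $T>T_E$ one has $\l_H(T)<0$, so \emph{both} terms on the left contribute with a good sign (since $\M>0$), and positivity is essentially automatic once $\L(U_T)\ge 0$ is established. Finally, continuity of $T\mapsto\L(U_T)$ and $T\mapsto\l_H(T)\M(U_T)$ on $(0,\infty)$, together with the asymptotics of $t_T$, $s_T$ as $T\to 0^+$ and $T\to\infty$ in \eqref{properties of tT}--\eqref{properties of sT}, upgrades the pointwise strict positivity to the existence of the constant $c(n,p,T)>0$ for each fixed $T$ (indeed locally uniformly in $T$).

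The main obstacle I anticipate is twofold. First, the convergence/finiteness issue: $\L(U_T)$ only converges because $n>2p-1$, and the $p$-Laplacian Pohozaev identities involve boundary terms at infinity on large half-balls $H\cap B_R$ that must be shown to vanish as $R\to\infty$; this requires the precise decay rates in \eqref{eqn: decay first part} and some care, since $x_n$ grows. Second, and more seriously, for $p\ne 2$ the algebra of the $p$-Laplacian Pohozaev identities is genuinely messy — the natural deformations are the dilations $U\mapsto U^{(\alpha)}$ and translations $U\mapsto\tau_{te_n}U$, and teasing out exactly the combination $\int_H x_n|\nabla U|^p - p\int_H x_n(\partial_1U)^2|\nabla U|^{p-2}$ with the coefficient $\tfrac{n-p}{n}$ in front of $\l_H\M$ will take a delicate choice of test vector field. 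My plan is to guess that the right vector field is $X=x_n\,e_n$ applied to the stress-energy tensor $T_{ij}=|\nabla U|^{p-2}\partial_iU\,\partial_jU-\tfrac1p|\nabla U|^p\delta_{ij}$ of the $p$-harmonic-type equation, compute $\partial_i(x_n T_{in})$, and match terms; the factor $\tfrac{n-p}n$ strongly suggests that the dilation identity $\int_H|\nabla U|^p=\Phi_H(T)^p$ combined with $\pstar=\tfrac{np}{n-p}$ is being used to eliminate a $\int_H U^\pstar$ term in favor of its first moment. If a fully explicit identity proves too unwieldy, a fallback is to prove the weaker statement that the left-hand side is the energy-derivative $\tfrac{d}{dt}\big|_{t=0}$ of a scale-corrected functional along $\tau_{te_n}U_T$, which is automatically nonnegative by minimality of $U_T$, and then rule out equality using the strict convexity/uniqueness in \cite{maggineumayer}.
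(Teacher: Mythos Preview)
Your plan is in the right spirit (integration by parts against the PDE, rotational symmetry to collapse the tangential directions) but it misses the clean decomposition the paper actually uses, and your proposed case analysis has a genuine gap.

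The paper's integration by parts is simpler than what you propose: just test \eqref{PDE of UT} with the \emph{scalar} function $x_n\,U_T$ (no stress-energy tensor, no vector field), and use the decay \eqref{eqn: decay first part} to kill the boundary term at infinity. This gives
\[
\l_H(T)\,\M(U_T)=\int_H x_n\,|\nabla U_T|^p+\int_H U_T\,|\nabla U_T|^{p-2}\,\partial_n U_T\,.
\]
Combining this with the symmetry identity you already noted, $\int_H x_n|\nabla U_T|^p=(n-1)\int_H x_n(\partial_1 U_T)^2|\nabla U_T|^{p-2}+\int_H x_n(\partial_n U_T)^2|\nabla U_T|^{p-2}$, one obtains the exact identity
\[
\L(U_T)-\tfrac{n-p}n\,\l_H(T)\,\M(U_T)
=\tfrac{p}n\int_H x_n\,|\nabla U_T|^{p-2}\bigl\{(\partial_n U_T)^2-(\partial_1 U_T)^2\bigr\}
+\tfrac{n-p}n\int_H U_T\,|\nabla U_T|^{p-2}\,(-\partial_n U_T)\,,
\]
and the proof reduces to showing each of these two integrals is strictly positive. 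This is where the real content lies, and it differs from your plan in two ways.

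First, for the second integral the relevant dichotomy is not the sign of $\l_H$ (i.e.\ $T\lessgtr T_E$) but the location of the center of radial symmetry of $U_T$ (i.e.\ $T\lessgtr T_0$, cf.\ \eqref{properties of tT}): when $T\ge T_0$ the center sits at $t\,e_n$ with $t\le 0$, so $\partial_n U_T<0$ on all of $H$ and positivity is immediate; when $T<T_0$ one needs a reflection argument across $\{x_n=t\}$. Your claim that for $T>T_E$ ``positivity is essentially automatic once $\L(U_T)\ge 0$ is established'' has a hole: using the symmetry identity, $\L(U_T)=(n-1-p)\int_H x_n(\partial_1 U_T)^2|\nabla U_T|^{p-2}+\int_H x_n(\partial_n U_T)^2|\nabla U_T|^{p-2}$, and since $n-1-p$ can be negative under the hypothesis $n>2p-1$ (take $n=2$, $p\in(1,3/2)$), the sign of $\L(U_T)$ is not obvious and is in fact never used.

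Second, positivity of the first integral is a separate lemma and is the key idea you are missing: for any function radially symmetric about some $t\,e_n$, one has $\int_H x_n|\nabla U|^{p-2}\{(\partial_n U)^2-(\partial_1 U)^2\}>0$. The paper proves this by writing everything in terms of the radial profile, applying the coarea formula, and reducing to the explicit spherical inequality
\[
\int_{\{z_n>-s\}\cap\partial B_1}(z_n+s)\,(z_n^2-z_1^2)\,d\H^{n-1}_z>0\qquad\mbox{for }s\in(-1,1)\,,
\]
which is then checked by a direct trigonometric computation. Neither a Pohozaev identity nor your minimality fallback will produce this; it is a pointwise geometric fact, independent of the Euler--Lagrange equation.
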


The following lemma will be useful in proving Lemma \ref{lemma key}.

\begin{lemma}\label{lemma 3.17}
	If $H=\{x_n>0\}$, $U:H \to \R$ is radially symmetric with respect to $t \,e_n$ for some $t \in \R$, and $\int_H x_n |\na U|^p$ is finite, then
	\[
	\int_{H}x_n\,|\nabla U|^{p-2}\big\{(\pa_n U)^2-(\pa_1 U)^2\big\}>0\,.
	\]
\end{lemma}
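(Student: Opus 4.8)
The plan is to exploit the rotational symmetry of $U$ about the axis $t\,e_n$ to reduce the claimed inequality to a statement about how the weight $x_n$ interacts with the $e_n$-component versus the $e_1$-component of $\nabla U$. Write $U(x) = \psi(r)$ where $r = |x - t\,e_n|$ and $x' = x - t\,e_n$, so that $\partial_i U = \psi'(r)\,x_i'/r$ for $i = 1,\dots,n-1$ and $\partial_n U = \psi'(r)\,(x_n - t)/r$. Then $|\nabla U|^{p-2} = |\psi'(r)|^{p-2} r^{2-p}$ (at points where $\psi'(r)\neq 0$), and
\[
|\nabla U|^{p-2}\big\{(\partial_n U)^2 - (\partial_1 U)^2\big\} = |\psi'(r)|^{p-2}\,r^{-p}\,\big\{(x_n - t)^2 - x_1^2\big\}\,|\psi'(r)|^2 = |\psi'(r)|^p\,r^{-p}\,\big\{(x_n-t)^2 - x_1^2\big\}.
\]
Thus the integral in question equals $\int_H x_n\,|\psi'(r)|^p\,r^{-p}\,\big((x_n - t)^2 - x_1^2\big)\,dx$, and I must show this is strictly positive.

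The key idea is a symmetrization/averaging argument in the $(n-1)$ tangential variables $x_1,\dots,x_{n-1}$. First I would change variables to $y = x - t\,e_n$ so that $x_n = y_n + t$ and $H = \{x_n > 0\} = \{y_n > -t\}$; the integrand becomes $(y_n + t)\,|\psi'(|y|)|^p\,|y|^{-p}\,(y_n^2 - y_1^2)$. Now observe that the factor $(y_n + t)\,|\psi'(|y|)|^p\,|y|^{-p}$ depends only on $y_n$ and $|y|$, hence is invariant under rotations of the $(y_1,\dots,y_{n-1})$-block and under the reflection $y_1 \mapsto -y_1$; moreover the domain $\{y_n > -t\}$ shares these symmetries. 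Therefore, for each fixed $y_n$ and each fixed value $\rho$ of $|y'| = (y_1^2 + \dots + y_{n-1}^2)^{1/2}$, integrating $y_1^2$ over the sphere $|y'| = \rho$ gives the average $\rho^2/(n-1)$, while $y_n^2$ is constant on that sphere. Carrying out the $y'$-angular integration reduces the whole integral to
\[
\int \big(y_n + t\big)\,|\psi'(r)|^p\,r^{-p}\,\Big(y_n^2 - \frac{\rho^2}{n-1}\Big)\,(\text{positive weight in }\rho, dy_n\,d\rho)\,,
\]
where $r^2 = y_n^2 + \rho^2$. Substituting $\rho^2 = r^2 - y_n^2$ turns $y_n^2 - \rho^2/(n-1)$ into $\frac{n}{n-1}y_n^2 - \frac{1}{n-1}r^2$, so after the angular reduction the sign is controlled by a one-variable comparison between a $y_n^2$-weighted piece and an $r^2$-weighted piece.

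To finish, I would reintroduce the radial–angular structure more carefully: writing everything in terms of $r$ and the angle $\theta$ between $y$ and $e_n$ (so $y_n = r\cos\theta$), the integral becomes, up to a positive constant,
\[
\int_0^\infty |\psi'(r)|^p\,r^{-p}\,r^{n-1}\,\Big(\int_{\{r\cos\theta > -t\}} (r\cos\theta + t)\big(r^2\cos^2\theta - \tfrac{1}{n}r^2\big)\sin^{n-2}\theta\,d\theta\Big)\,dr\,,
\]
and the positivity reduces to showing the inner angular integral is positive for every $r>0$ (with $t$ fixed). Splitting $(r\cos\theta + t) = r\cos\theta + t$ and using that $\int_{0}^{\pi}\cos^2\theta\,\sin^{n-2}\theta\,d\theta = \frac{1}{n}\int_0^\pi \sin^{n-2}\theta\,d\theta$ makes the $t$-free, full-sphere part of $\int (r\cos\theta + t)\cos^2\theta$-against-$\tfrac1n r^2$ vanish in the leading structure; what remains is a genuinely positive contribution coming from (a) the restriction to the half-space $\{r\cos\theta > -t\}$ truncating the region of integration, and (b) the odd-in-$\cos\theta$ term $t\cdot(\cos^2\theta - \tfrac1n)$ paired against the even weight $\sin^{n-2}\theta$. \textbf{The main obstacle} I anticipate is precisely pinning down the sign of this inner angular integral uniformly in $r$ and $t$: one must handle separately the regimes $t \ge 0$, $t \le 0$, and $|t|$ large versus small relative to $r$, and verify in each case that the truncation-plus-odd-term contribution has a definite positive sign (strictness then follows because $\psi'$ cannot vanish identically, as $\int_H x_n|\nabla U|^p$ is finite but the integrand is not identically zero for the explicit profiles $U_T$). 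An alternative, perhaps cleaner route to the same end is to integrate by parts in $r$ using the ODE $-\Delta_p U = \lambda_H U^{p^\star - 1}$ satisfied by the profile, trading the weight $r^{-p}$ against curvature of $\psi$; but since Lemma \ref{lemma 3.17} is stated for a general radial $U$ (not only $U_T$), I would prefer the direct angular computation above, which uses only that $\int_H x_n|\nabla U|^p < \infty$ and radial symmetry.
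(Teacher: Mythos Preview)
Your overall strategy---reduce to a radial profile times an angular factor, then show the angular factor is positive on each spherical cap---is exactly the paper's. But there is a genuine gap: you correctly identify ``pinning down the sign of the inner angular integral'' as the main obstacle, and you do not close it. This is the entire content of the lemma; everything before it is bookkeeping.

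Two remarks. First, a harmless slip: since $|\nabla U|=|\psi'(r)|$, one has $|\nabla U|^{p-2}=|\psi'(r)|^{p-2}$, not $|\psi'(r)|^{p-2}r^{2-p}$; the correct integrand is $|\psi'(r)|^p\,r^{-2}\,\{(x_n-t)^2-x_1^2\}$. This only changes a positive radial weight and does not affect the sign question.

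Second, your averaging over $y_1,\dots,y_{n-1}$ (rather than the paper's projection onto $(z_1,z_n)$) actually leads to a \emph{cleaner} endgame than you anticipate. After substituting $u=\cos\theta$ your inner integral becomes, with $s=t/r$,
\[
J(s)=\int_{-s}^{1}(u+s)\,(nu^2-1)\,(1-u^2)^{(n-3)/2}\,du\,.
\]
Observe that $(nu^2-1)(1-u^2)^{(n-3)/2}=\frac{d}{du}\big[-u(1-u^2)^{(n-1)/2}\big]$, so one integration by parts (the boundary terms vanish because $u+s=0$ at $u=-s$ and $(1-u^2)=0$ at $u=1$) gives
\[
J(s)=\int_{-s}^{1}u\,(1-u^2)^{(n-1)/2}\,du=\frac{1}{n+1}\,(1-s^2)^{(n+1)/2}>0\qquad\mbox{for }s\in(-1,1)\,,
\]
and $J(s)=0$ for $|s|\ge 1$. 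This is what you were missing, and it replaces the case analysis you were bracing for.

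By contrast, the paper keeps both $z_1$ and $z_n$ and integrates out $z_2,\dots,z_{n-1}$ via the coarea formula, reducing to a \emph{dimension-independent} integral over arcs of circles, which it then computes explicitly as $\tfrac{2}{3}\cos^3\alpha$. Both routes work; yours is arguably more direct once one spots the antiderivative above. Finally, note that strictness requires $\psi'$ not to vanish identically on $\{r>|t|\}$; this is automatic for the profiles $U_T$ to which the lemma is applied, but is worth stating if you keep the lemma at the generality you wrote.
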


\begin{proof}[Proof of Lemma \ref{lemma 3.17}] We have  $U(x)=\eta(|x-t\,e_n|)$, $y=x-t\,e_n$, $r=|y|$, and $\hat{y}=y/|y|$, so that
\[
x_n\,|\nabla U|^{p-2}\big\{(\pa_n U)^2-(\pa_1 U)^2\big\}
=(y_n+t)\,|\eta'(r)|^p\big\{(\hat{y}_n)^2-(\hat{y}_1)^2\big\}\,,
\]
and, setting $y=r\,z$,
\begin{eqnarray*}
&&\int_{H}x_n\,|\nabla U|^{p-2}\big\{(\pa_n U)^2-(\pa_1 U)^2\big\}
=\int_{\{y_n>-t\}}(y_n+t)\,|\eta'(r)|^p\big\{(\hat{y}_n)^2-(\hat{y}_1)^2\big\}
\\
&&=\int_0^\infty\,|\eta'(r)|^p\,dr  \int_{\{y_n>-t\}\cap \pa B_r}(y_n+t)\,\big\{(\hat{y}_n)^2-(\hat{y}_1)^2\big\}\,d\H^{n-1}_y
\\
&&=\int_0^\infty\,r^n\,|\eta'(r)|^p\,dr  \int_{\{z_n>-t/r\}\cap \pa B_1}\Big(z_n+\frac{t}r\Big)\,\big\{z_n^2-z_1^2\big\}\,d\H^{n-1}_z\,.
\end{eqnarray*}
We conclude the proof by showing that
\begin{equation}
  \label{fine 3}
  \int_{\{z_n>-s\}\cap \pa B_1}\big(z_n+s\big)\,\big\{z_n^2-z_1^2\big\}\,d\H^{n-1}_z>0 \quad \text{ for all } s\in(-1,1),
\end{equation}
noting that for each $s \in [1,\infty)$, this integral vanishes by symmetry while for $s \in (-\infty, -1]$ the domain of integration is empty.  To see \eqref{fine 3},  let $\pp:\R^n\to\R^{n-2}$ denote the projection map $\pp(x)=(x_2,...,x_{n-1})$ (if $n=2$ there is no need to introduce $\pp$). The tangential coarea factor of $\pp$ along $\pa B_1$ defines a positive function $K:\pa B_1\to(0,\infty]$ which is $\H^{n-1}$-a.e. finite on $\pa B_1$, and which is {\it independent} of the variables $(x_1,x_n)$, i.e. $K(x_1,w,x_n)=K(w)$ for every $(x_1,w,x_n)\in\pa B_1$. Therefore, setting for brevity $M_s=\{z_n>-s\}\cap \pa B_1$,
\[
\int_{M_s}\big(z_n+s\big)\,\big\{z_n^2-z_1^2\big\}\,d\H^{n-1}_z
=\int_{\pp(M_s)}\,\frac{d\L^{n-2}_w}{K(w)}\int_{M_s\cap\pp^{-1}(w)}\big(z_n+s\big)\,\big\{z_n^2-z_1^2\big\}\,d\H^1_{(z_1,z_n)}\,,
\]
where
\[
M_s\cap\pp^{-1}(w)=\Big\{(z_1,z_n):z_1^2+z_n^2=1-|w|^2\,,z_n>-s\Big\}\,,\qquad s\in(-1,1)\,.
\]
As before, the inner integral above vanishes when $1-|w|^2\leq s$ by symmetry and the domain of integration is empty when $-s \geq 1-|w|^2$.
We are thus left to prove that
\[
  \int_{-\a}^{\pi+\a}\big(\sin\theta+\sin\a\big)\,\big\{\sin^2\theta-\cos^2\theta\big\}\,d\theta>0
\]
  for $\a\in(0,\pi/2)$ (corresponding to the case when $s\geq0$) and
  \[
    \int_{\a}^{\pi-\a}\big(\sin\theta-\sin\a\big)\,\big\{\sin^2\theta-\cos^2\theta\big\}\,d\theta>0
  \]
   for $\a\in(0,\pi/2)$ (corresponding to the case when $s<0$). Direct computation shows that both of these integrals are equal to the positive quantity $(2/3)\,(\cos\alpha)^3$.
\end{proof}

We now prove Lemma~\ref{lemma key}.

\begin{proof}[Proof of Lemma \ref{lemma key}] Testing \eqref{PDE of UT} with $x_n\,U_T$ we find
\begin{eqnarray*}
  \l_H(T)\,\int_H x_n\,U_T^\pstar=-\int_Hx_n\,U_T\,\Delta_p U_T
  =\int_H|\nabla U_T|^{p-2}\,\nabla U_T\cdot\nabla (x_n\,U_T)\,.
\end{eqnarray*}
Here the integration by parts is justified since $x_n=0$ on $\pa H$ and since by \eqref{eqn: decay first part},
\[
\Big|\int_{H\cap\pa B_R} x_n\,U_T\,|\nabla U_T|^{p-2}(\nu_{B_R}\cdot\nabla U_T)\Big|\le C\,\frac{R^{n-1}\,R}{R^{(n-p)/(p-1)}}\,
\Big(\frac1{R^{(n-1)/(p-1)}}\Big)^{p-1}\to 0
\]
like $R^{-(n+1)/(p-1)}$ as $R\to\infty$.
We thus find that
\begin{eqnarray*}
  &&\!\!\!\!\!\!\!\!\!\!\L(U_T)-\frac{n-p}n\,\l_H(T)\,\M(U_T)=
  \int_{H}x_n\,|\nabla U_T|^p-p\,x_n\,(\pa_1 U_T)^2\,|\nabla U_T|^{p-2}
  \\&&\hspace{4cm}-\frac{n-p}n\,\Big\{\int_H x_n\,|\nabla U_T|^p+\int_H U_T\,|\nabla U_T|^{p-2}\,\pa_n U_T\Big\}\,.
\end{eqnarray*}
Now, since $|\nabla U_T|$ is symmetric by reflection with respect to the hyperplanes $\{x_i=0\}$, $i=1,...,n-1$, we see that
\begin{eqnarray*}
  \int_H x_n\,|\nabla U_T|^p&=&\sum_{i=1}^{n-1}\int_H\,x_n\,(\pa_iU_T)^2|\nabla U_T|^{p-2}+\int_H\,x_n\,(\pa_nU_T)^2|\nabla U_T|^{p-2}
  \\
  &=&(n-1)\int_H\,x_n\,(\pa_1U_T)^2|\nabla U_T|^{p-2}+\int_H\,x_n\,(\pa_nU_T)^2|\nabla U_T|^{p-2}
\end{eqnarray*}
so that continuing from above we have
\begin{eqnarray*}
  &&\L(U_T)-\frac{n-p}n\,\l_H(T)\,\M(U_T)
  \\
  &&=\frac{p}n\,\int_{H}x_n\,|\nabla U_T|^p-p\,\int_H\,x_n\,(\pa_1 U_T)^2\,|\nabla U_T|^{p-2}
  -\frac{n-p}n\,\int_H U_T\,|\nabla U_T|^{p-2}\,\pa_n U_T
  \\
  &&
  =\frac{p}n\,\int_{H}x_n\,|\nabla U_T|^{p-2}\Big\{(\pa_n U_T)^2-(\pa_1 U_T)^2\Big\}
  +\frac{n-p}n\,\int_H U_T\,|\nabla U_T|^{p-2}\,\big(-\pa_n U_T\big)\,.
\end{eqnarray*}
In particular, the lemma is proved by showing that
\begin{eqnarray}
  \label{fine 2}
  &&\int_{H}x_n\,|\nabla U_T|^{p-2}\big\{(\pa_n U_T)^2-(\pa_1 U_T)^2\big\}> 0\,,\\
  \label{fine 1}
  &&\int_H U_T\,|\nabla U_T|^{p-2}\,\big(-\pa_n U_T\big)>0\,,
\end{eqnarray}
where the first inequality, \eqref{fine 2}, is immediate from Lemma~\ref{lemma 3.17} (recall that $n>2p-1$ and  $U_T$ is radially symmetric with respect to $t e_n$ for some $t\in\R$).

\medskip

We are thus left to prove \eqref{fine 1}. This is immediate in the case when $T\geq T_0$, because in that case, by \eqref{properties of tT} and \eqref{properties of sT}, $U_T$ has center of symmetry at $t \, e_n$ for some $t \leq 0$, and thus $\pa_n U_T<0$ on $H$. By \eqref{properties of tT}, if $T \in (0, T_0)$, then $U_T$ has center of symmetry at $t\,e_n$ for some $t>0$. Correspondingly, $U_T\,\pa_nU_T$ is odd with respect to $\{x_n=t\}$, with $U_T\,\pa_nU_T<0$ on $\{x_n>t\}$ and $U_T\,\pa_nU_T>0$ on $\{0<x_n<t\}$: in particular, if $p_t$ denotes the reflection with respect to $\{x_n=t\}$, then
\begin{eqnarray*}
&&\int_{2t>x_n>t}\,x_n\,U_T\,(-\pa_n U_T)=\int_{t>x_n>0}\,(p_t(x)\cdot e_n)\,\,[U_T\,(-\pa_n U_T)](p_t(x))\,dx
\\
&&=\int_{t>x_n>0}\,(p_t(x)\cdot e_n)\,\,[U_T\,\pa_n U_T)](x)\,dx
\ge\int_{t>x_n>0}\,x_n\,\,U_T\,\pa_n U_T\,,
\end{eqnarray*}
so that
\[
\int_H U_T\,|\nabla U_T|^{p-2}\,\big(-\pa_n U_T\big)\ge
\int_{\{x_n>2t\}} U_T\,|\nabla U_T|^{p-2}\,\big(-\pa_n U_T\big)\,,
\]
and the latter integral is positive because $\pa_n U_T<0$ on $\{x_n>t\}$.
\end{proof}

\subsection{Standard variations of $\Phi_H$-minimizers}\label{sec: standard variations}
We now introduce a ``class of standard variations'' of minimizers of $\Phi_H$. With $H=\{x_n>0\}$, we define $\zeta=\zeta(r,T):[0,\infty)\times(0,\infty)\to[0,\infty)$, so that, setting $V_T(x)=\zeta(|x-e_n|,T)$ for $x\in\R^n$, we have
\begin{equation}
\label{def of VT}
    V_T\in C^\infty_c(H;[0,\infty))\,,\qquad \int_H\,U_T^{\pstar-1}\,V_T=1\,,\qquad\int_{\pa H}\,U_T^{\psharp-1}\,V_T=0\,.
\end{equation}
Given $T>0$ we denote by
\begin{equation}
  \label{def of U T eps}
  \U_T
\end{equation}
the family of functions $U:H\to\R$ of the form
\[
U=U_T+t\,V_T\,,\qquad |t|\le 1\,.
\]
The following lemma contains some basic properties of functions in $\U_T$. We notice that
\begin{equation}\label{symmetry of U in UTeps}
  \begin{split}
    &\mbox{every $U\in\U_T$ is symmetric by reflection}
    \\
    &\mbox{with respect to the coordinates $x_1$, ..., $x_{n-1}$}\,.
  \end{split}
\end{equation}

\begin{lemma}[Standard variations of $U_T$]
  \label{lemma moments}
  If $n\ge 2$, $p\in(1,n)$, and $T>0$, then there are positive constants $R_0$ and $C_0$ depending on $n$, $p$, $T$, and $V_T$ such that the following properties hold:

  \medskip

  \noindent {\bf (i):} if $U\in\U_T$, then for every $|x|>R_0$ we have
  \begin{eqnarray}\label{UT decay pointwise}
  \begin{split}
    \hspace{1.1cm} \frac{1}{C_0|x|^{(n-p)/(p-1)}}\ &\leq U(x)\,\le \frac{C_0}{|x|^{(n-p)/(p-1)}}\,,\\
 \frac{1}{C_0|x|^{(n-1)/(p-1)}}& \leq |\nabla U(x)|\le \frac{C_0}{|x|^{(n-1)/(p-1)}}\,,
  \end{split}
  \end{eqnarray}
  and for every $R>R_0$,
  \begin{eqnarray}
  &&\!\!\!\!\!\!\!\!\!\!\!\!\label{UT decay pstar}
  \int_{H\setminus B_R}U^{\pstar}\le \frac{C_0}{R^{n/(p-1)}}\,,\qquad \int_{H\setminus B_R}U^{\pstar-1}\le \frac{C_0}{R^{p/(p-1)}}\,,
  \\
  &&\!\!\!\!\!\!\!\!\!\!\!\!\label{UT decay p}
  \int_{H\cap (B_{2R}\setminus B_R)}\!\!\!\!\!\!\!\!\!\!\!\!\!\!\!\!U^p\le \frac{C_0}{R^{(n-p^2)/(p-1)}}\,,\qquad
  \!\!\!\!\!\!\int_{H\setminus B_R}\!\!\!|\nabla U|^p\le \frac{C_0}{R^{(n-p)/(p-1)}}\,,
  \\
  &&\!\!\!\!\!\!\!\!\!\!\!\!\label{UT decay trace psharp}
  \int_{(\pa H)\setminus B_R}\!\!\!\!\!\!U^\psharp\le \frac{C_0}{R^{(n-1)/(p-1)}}\,,\qquad
  \int_{(\pa H)\setminus B_R}\!\!\!\!\!\!U^{\psharp-1}\le \frac{C_0}{R}\,,
  \\
  &&\!\!\!\!\!\!\!\!\!\!\!\!\label{UT decay pstar first moment}
  \int_{H\setminus B_R}|x|\,U^{\pstar}\le \frac{C_0}{R^{[1+n-p]/(p-1)}}\,,
  \\
  \label{UT decay grad p first moment}
  &&\!\!\!\!\!\!\!\!\!\!\!\!\int_{H\setminus B_R}|x|\,|\nabla U|^p\le \frac{C_0}{R^{(n+1-2p)/(p-1)}}\,,\qquad\mbox{if $n>2\,p-1$}\,.
  \end{eqnarray}

  \noindent {\bf (ii):} for every $U\in\U_T$ we have
  \begin{eqnarray}
  \label{UT variation gradient}
  &&\int_{H}|\nabla U|^p=\Phi_H(T)^p+p\,\l_H(T)\,t+{\rm o}(t)\,,
  \\
  \label{UT variation volume}
  &&\int_{H}U^\pstar=1+\pstar\,t+{\rm o}(t)\,,
  \\
  \label{UT variation trace}
  &&\int_{\pa H}U^\psharp=T^\psharp\,.
  \end{eqnarray}
\end{lemma}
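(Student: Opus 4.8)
The plan is to treat the decay bounds of part (i) and the first-order expansions of part (ii) by two essentially independent arguments. Both rely on the fact that $V_T$, being of the form $V_T(x)=\zeta(|x-e_n|,T)$ with $\zeta(\cdot,T)$ compactly supported, is supported in a fixed compact set $K_T\subset H$ depending only on $n,p,T$ (and $V_T$), so that $U=U_T$ and $\nabla U=\nabla U_T$ on $H\setminus K_T$ for every $U\in\U_T$.

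\emph{Part (i).} First I would fix $R_0$ so large that $K_T\subset B_{R_0}$, that $B_{R_0}$ contains the center of symmetry of $U_T$ (namely $t_Te_n$, $e_n$, or $s_Te_n$ according to \eqref{UT}; these are finite for fixed $T$ by \eqref{properties of tT} and \eqref{properties of sT}), and that the asymptotics below hold. Then $U\equiv U_T$ on $\{|x|>R_0\}$, and writing $U_T(x)=c_T\,\eta(|x-x^\star|)$ with $\eta$ the relevant radial profile from \eqref{UT} and $x^\star\in B_{R_0}$ its center, an elementary computation with the explicit formulas gives, as $r\to\infty$,
\[
\eta(r)=(1+\mathrm{o}(1))\,r^{(p-n)/(p-1)}\,,\qquad |\eta'(r)|=(1+\mathrm{o}(1))\,c\,r^{(1-n)/(p-1)}
\]
for a constant $c=c(n,p)>0$; the point is that $\eta$ is strictly monotone for large $r$, so the lower bound on $|\nabla U|$ does not degenerate. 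Since $|x-x^\star|=|x|(1+\mathrm{o}(1))$, this yields \eqref{UT decay pointwise} (and re-proves \eqref{eqn: decay first part}). The integral estimates \eqref{UT decay pstar}--\eqref{UT decay grad p first moment} then follow by integrating these pointwise bounds in (generalized) polar coordinates over the indicated sets, using the exponent identities $(n-p)\pstar/(p-1)=np/(p-1)$, $(n-p)\psharp/(p-1)=(n-1)p/(p-1)$, $(n-p)(\pstar-1)/(p-1)=n+p/(p-1)$, $(n-p)(\psharp-1)/(p-1)=n$, $(n-p)p/(p-1)=n-(p^2-n)/(p-1)$, together with the inequalities $np/(p-1)>n$ and $(n-1)p/(p-1)>n$. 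I would note that all the relevant tails converge at infinity except \eqref{UT decay grad p first moment}, where one needs $n>2p-1$, and that for $\int U^p$ one only integrates over dyadic annuli precisely because $U^p$ need not be integrable near infinity when $n<p^2$.

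\emph{Part (ii).} Since $V_T\in C^\infty_c(H)$ vanishes on $\pa H$, we have $U\equiv U_T$ on $\pa H$, so \eqref{UT variation trace} is immediate from \eqref{UT volume trace and energy}. For \eqref{UT variation volume} I would write, by the fundamental theorem of calculus, $U=U_T+tV_T$ and
\[
|U|^{\pstar}-U_T^{\pstar}=\pstar\,t\int_0^1|U_T+stV_T|^{\pstar-2}(U_T+stV_T)\,V_T\,\di s\,;
\]
the $s$-integrand is supported in $K_T$, where $U_T$ is bounded and smooth (the singular/non-smooth loci of the formulas in \eqref{UT} lie outside $\overline H$), hence it is dominated by a fixed $L^1$ function uniformly in $s\in[0,1]$ and $|t|\le 1$, and it converges pointwise to $\pstar\,U_T^{\pstar-1}V_T$ as $t\to0$. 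Dominated convergence and the normalization $\int_H U_T^{\pstar-1}V_T=1$ from \eqref{def of VT} then give \eqref{UT variation volume}. For \eqref{UT variation gradient} the same scheme applied to $v\mapsto|v|^p$ (which is $C^1$ for $p>1$, with $\big||v|^{p-2}v\big|=|v|^{p-1}$ bounded on bounded sets) gives
\[
\int_H|\nabla U|^p=\Phi_H(T)^p+p\,t\Big(\int_H|\nabla U_T|^{p-2}\nabla U_T\cdot\nabla V_T\Big)+\mathrm{o}(t)\,,
\]
and I would evaluate the bracket by testing the weak form of \eqref{PDE of UT} against $V_T$ — legitimate with no boundary terms since $V_T\in C^\infty_c(H)$ — obtaining $\int_H|\nabla U_T|^{p-2}\nabla U_T\cdot\nabla V_T=\l_H(T)\int_H U_T^{\pstar-1}V_T+\s_H(T)\int_{\pa H}U_T^{\psharp-1}V_T=\l_H(T)$ by \eqref{def of VT}.

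\emph{Main obstacle.} There is no genuine obstacle here; the only points demanding care are (a) that the lower bound for $|\nabla U_T|$ in \eqref{UT decay pointwise} holds with a uniform positive constant across the three regimes of \eqref{UT} and for all $|x|>R_0$, which is handled by the strict monotonicity of the radial profile for large $r$, and (b) that the dominating functions in part (ii) are uniform for $|t|\le1$, which is immediate since everything is confined to the fixed compact set $K_T$. Both reduce to routine verification.
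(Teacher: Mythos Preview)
Your proposal is correct and follows essentially the same approach as the paper's (very brief) proof, which simply notes that part (i) reduces to $U_T$ by the compact support of $V_T$ and then follows from the explicit formulas in \eqref{UT}, and that part (ii) follows from \eqref{PDE of UT}. Your version just fills in the routine details that the paper leaves implicit.
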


\begin{proof}[Proof of Lemma \ref{lemma moments}] Since $V_T$ is assumed to be compactly supported, statement (i) follows immediately from the corresponding properties for $U_T$. More specifically, we deduce \eqref{UT decay pointwise} from \eqref{UT}, and \eqref{UT decay pstar}--\eqref{UT decay grad p first moment} from \eqref{UT decay pointwise}. Statement (ii) follows from \eqref{PDE of UT}.
\end{proof}

\subsection{The {\it Ansatz} for boundary concentrations} We next use the standard variations of minimizers of $\Phi_H$ described in Lemma \ref{lemma moments} to define certain competitors for $\Phi_\Om$ that provide us with a notion of ``standard boundary concentration.'' Recall the notation $U^{(\e)}$ for dilations introduced in \eqref{def of tau x0 and of rho alfa}.

\begin{lemma}\label{lemma boundary concentration}
 Fix $n\ge 2$, $p\in(1,n)$, $T>0$, $U \in \U_T$. Let  $\Om$ be an open set with $C^1$-boundary, $x_0\in\pa\Om$,  and let $\hat{f}= \hat{f}_{x_0}$, $\hat{g}=\hat{f}^{-1}$, $f= f_{x_0}$ and $g=f^{-1}$ be determined as in Remark \ref{remark fx0} starting from $\Om$ and $x_0$. Then the following statements hold:

 \medskip

 \noindent {\bf (i):} If $v\in W^{1,p}(\Om)$, $\b\in(0,1)$, $r_1=\e^\b$, $r_2=2\,\e^\b$, and $\vphi_\e$ is a cut-off function between $B_{r_1}(x_0)$ and $B_{r_2}(x_0)$ with $|\na \vphi_\e| \leq  C\e^{-\beta}$, then
 \begin{eqnarray}\label{def of veps hat}
 v_\e(x)=(1-\vphi_\e(x))\,v(x)+\vphi_\e(x)\,\big(U^{(\e)}\circ \hat{g}\big)(x)\,,\qquad x\in\Om\,,
 \end{eqnarray}
satisfies
 \begin{eqnarray}\label{veps converge grad}
 &&\lim_{\e\to 0^+}\int_{\Om}\!\!\!|\nabla v_\e|^p=\int_H\,|\nabla U|^p+\int_\Om|\nabla v|^p\,,
 \\\label{veps converge volume}
 &&\lim_{\e\to 0^+}\int_{\Om}\!\!\! v_\e^\pstar=\int_H U^\pstar+\int_\Om v^\pstar\,,
 \\\label{veps converge trace}
 &&\lim_{\e\to 0^+} \int_{\pa\Om} \!\!\!v_\e^\psharp=\int_{\pa H}U^\psharp+\int_{\pa\Om}v^\psharp\,.
 \end{eqnarray}

 \noindent {\bf (ii):} If $n>2p$, $v\in{\rm Lip}(\Om)$ and $\Om$ has $C^2$-boundary, then there exists a choice of $\b=\b(n,p)\in(0,1)$ (used in the definition of $r_1=\e^\b$ and $r_2=2\,\e^\b$), such that the function
  \begin{eqnarray}\label{def of veps}
 v_\e(x)=(1-\vphi_\e(x))\,v(x)+\vphi_\e(x)\,\big(U^{(\e)}\circ g \big)(x)\,,\qquad x\in\Om\,,
 \end{eqnarray}
 satisfies \eqref{veps converge grad}, \eqref{veps converge volume}, and \eqref{veps converge trace} in the more precise form
 \begin{align}
\label{veps stima grad}
\int_{\Om}|\nabla v_\e|^p & =\int_{H}|\nabla U|^p+\int_\Om|\nabla v|^p-{\rm H}_{\pa \Om}(x_0)\,\L(U)\,\e+{\rm o}(\e)\,,
\\
\int_{\Om}v_\e^\pstar  & = \int_H U^\pstar+\int_\Om v^\pstar-{\rm H}_{\pa \Om}(x_0)\,\M(U)\,\e+{\rm o}(\e)\,,
\label{veps stima vol}
\\\label{veps stima trace}
\int_{\pa \Om}v_\e^\psharp & =\int_{\pa H}U^{\psharp}+\int_{\pa\Om}v^\psharp+{\rm o}(\e)\,,
 \end{align}
as $\e\to0^+$. Here $\L(U)$ and $\M(U)$ are defined in \eqref{def of LU} and \eqref{def of Mu} and the orders in \eqref{veps stima grad}, \eqref{veps stima vol}, and \eqref{veps stima trace} depend on $n$, $p$, $T$, and $v$.
\end{lemma}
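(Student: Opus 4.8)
The plan is to treat the two statements in a unified way, building the competitor $v_\e$ by gluing $v$ on $\Om\setminus B_{r_2}(x_0)$ to a dilated, flattened copy $U^{(\e)}\circ\Psi^{-1}$ of $U$ on $B_{r_1}(x_0)$, where $\Psi$ is either $\hat f_{x_0}$ (part (i)) or $f_{x_0}$ (part (ii)); the transition is governed by the cut-off $\vphi_\e$. Because $U^{(\e)}(x)=\e^{-n/\pstar}U(x/\e)$ decays like $|x|^{(p-n)/(p-1)}$ at infinity and $r_i=\e^\b\to 0$, the rescaled function concentrates at $x_0$ as $\e\to0^+$. The three integrals $\int_\Om|\nabla v_\e|^p$, $\int_\Om v_\e^\pstar$, $\int_{\pa\Om}v_\e^\psharp$ all split (up to negligible cross terms supported in the annulus $B_{r_2}\setminus B_{r_1}$) as a ``bulk'' contribution from $v$ on $\Om\setminus B_{r_2}(x_0)$ plus a ``concentrated'' contribution from the rescaled bubble. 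The bulk contributions converge to $\int_\Om|\nabla v|^p$, $\int_\Om v^\pstar$, $\int_{\pa\Om}v^\psharp$ respectively with an error of size $O(r_1^{\,?})$ (using boundedness/Lipschitz-ness of $v$ and that $|B_{r_2}(x_0)\cap\Om|\sim r_2^n$, $\H^{n-1}((\pa\Om)\cap B_{r_2}(x_0))\sim r_2^{n-1}$), which in part (i) is $o(1)$ and in part (ii) must be arranged to be $o(\e)$ by choosing $\b$ large enough. The concentrated contributions are, after the change of variables $y=\Psi(x)$ and then $z=y/\e$, equal to the corresponding integrals of $U$ over $H\cap(\e^{-1}\CC_{r_0})$ distorted by the Jacobian factors $J\Psi$, $J^{\pa H}\Psi$ and the pulled-back metric $(\nabla g)^*\circ\Psi$, minus the tails $\int_{H\setminus B_{c/\e}}(\cdots)$, which are controlled by the decay estimates \eqref{UT decay pstar}--\eqref{UT decay grad p first moment} in Lemma \ref{lemma moments} and are $o(1)$ — in fact $o(\e)$ once $n>2p$, which is exactly where that hypothesis is used.

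For part (i) this already gives \eqref{veps converge grad}--\eqref{veps converge trace}: the metric distortion factors are $\Id+o(1)$ and the Jacobians are $1+o(1)$ by \eqref{f C1 estimates}--\eqref{f C1 estimate for Jf} in Lemma \ref{lemma boundary 0}, the cross terms on the annulus are estimated crudely using $|\nabla\vphi_\e|\le C\e^{-\b}$ together with Hölder's inequality and the decay of $U$ on $B_{r_2/\e}\setminus B_{r_1/\e}$ (an annulus of inner radius $\e^{\b-1}\to\infty$), and one lets $\e\to0^+$. The only care needed is to check that the annular cross terms vanish, which follows because $\int_{H\cap(B_{2R}\setminus B_R)}U^p\le C R^{-(n-p^2)/(p-1)}$ and the factor $\e^{-\b p}$ from $|\nabla\vphi_\e|^p$ times $|B_{r_2}|^{\,}\sim\e^{\b n}$ is more than compensated by this decay when $R=\e^{\b-1}$; no smallness of $\b$ or largeness of $n$ is needed here since any $\b\in(0,1)$ works.

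For part (ii), where the precise expansions \eqref{veps stima grad}--\eqref{veps stima trace} are required, I would use the second-order near-boundary expansions of Lemma \ref{lemma boundary}: $Jf(x)=1-x_n{\rm H}_\Om(0)+O(|x|^2)$, $1\le J^{\pa H}f\le 1+O(|x|^2)$, and $(\nabla g)^*\circ f=\Id+(\nabla\ell\otimes e_n-e_n\otimes\nabla\ell)+x_n\sum_i\k_i\,e_i\otimes e_i+O(|x|^2)$. Expanding $|\nabla(U^{(\e)}\circ g)|^p=|(\nabla g)^*\nabla U^{(\e)}|^p$ to first order in the perturbation of the metric, the leading term is $\int|\nabla U^{(\e)}|^p$, the $O(|x|^2)$ terms contribute $o(\e)$ after rescaling (here $n>2p$ enters to make the rescaled second-order tails summable, cf. \eqref{UT decay grad p first moment}, which needs $n>2p-1$, and the stronger $n>2p$ handles the $p$-power energy terms and permits a valid choice of $\b$), the antisymmetric term $\nabla\ell\otimes e_n-e_n\otimes\nabla\ell$ contributes nothing to $|\nabla U^{(\e)}|^p$ to first order because $A^*A=\Id$ for that skew part is $O(|\nabla\ell|^2)=O(|x|^2)$ — more precisely the first-order effect of a skew perturbation on $|M v|^p$ vanishes — while the curvature term $x_n\sum_i\k_i e_i\otimes e_i$ is the one that survives. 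Carrying out the change of variables $x=\e z$ turns the surviving first-order term, after using $\sum_i\k_i={\rm H}_\Om(x_0)$ and the reflection symmetry \eqref{symmetry of U in UTeps} of $U$ (which lets one replace $(\pa_iU)^2$ by $(\pa_1U)^2$ for $i\le n-1$), into exactly $-{\rm H}_{\pa\Om}(x_0)\,\e\int_H z_n\big(|\nabla U|^p-p(\pa_1 U)^2|\nabla U|^{p-2}\big)=-{\rm H}_{\pa\Om}(x_0)\,\e\,\L(U)$; the Jacobian factor $Jf=1-x_n{\rm H}_\Om(0)+O(|x|^2)$ from $dx$ contributes the $-{\rm H}_{\pa\Om}(x_0)\,\e\,\M(U)$ term in \eqref{veps stima vol} and an analogous curvature term in the energy that, I must be careful to track, combines with the metric term; similarly $1\le J^{\pa H}f\le 1+O(|x|^2)$ makes the trace expansion have no first-order term, giving \eqref{veps stima trace}. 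The main obstacle — and the place where the bookkeeping is genuinely delicate — is the first-order expansion of the $p$-energy $|(\nabla g)^*\nabla(U^{(\e)}\circ g)|^p$: one must linearize the nonlinear function $M\mapsto|M^*\xi|^p$ at $M=\Id$, apply it to $\xi=\nabla U^{(\e)}$, correctly pair the linear term against the curvature part of the metric perturbation, and verify that all the error terms (second-order metric terms, the $O(|x|^2)$ remainders, the annular cross terms now weighted by $|\nabla\vphi_\e|^p\sim\e^{-\b p}$, and the tails of $U$ at scale $\e^{\b-1}$) are $o(\e)$ simultaneously, which forces the specific admissible window for $\b=\b(n,p)$ and is exactly where the hypothesis $n>2p$ is consumed.
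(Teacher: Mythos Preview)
Your proposal is correct and follows essentially the same approach as the paper: split into bulk, annular, and concentrated pieces, use the $C^1$ estimates of Lemma~\ref{lemma boundary 0} for (i) and the second-order expansions of Lemma~\ref{lemma boundary} for (ii), exploit the symmetry \eqref{symmetry of U in UTeps} to collapse $\sum_i\kappa_i(\pa_iU)^2$ to ${\rm H}_\Om(x_0)(\pa_1U)^2$, and choose $\b$ in the window forced by $n>2p$. One point to sharpen: the metric perturbation alone does \emph{not} produce $-{\rm H}_{\pa\Om}(x_0)\,\e\,\L(U)$; its first-order contribution to $|\nabla(U^{(\e)}\circ g)|^p$ is $+p\,x_n\sum_i\kappa_i(\pa_iU^{(\e)})^2|\nabla U^{(\e)}|^{p-2}$, and it is only after multiplying by $Jf=1-x_n{\rm H}_\Om(0)+O(|x|^2)$ that the two first-order pieces combine (via $\sum_i\kappa_i={\rm H}_\Om(0)$ and the symmetry) into $-x_n{\rm H}_\Om(0)\big(|\nabla U^{(\e)}|^p-p(\pa_1U^{(\e)})^2|\nabla U^{(\e)}|^{p-2}\big)$, which after rescaling is $-{\rm H}_{\pa\Om}(x_0)\,\e\,\L(U)$ --- you flag this combination, but your wording briefly attributes the full $\L(U)$ to the metric term alone.
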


\begin{proof} Without loss of generality we assume that $x_0=0\in\pa\Om$, $T_0\,(\pa\Om)=\{x_n=0\}$ and $\nu_{\Om}(0)=-e_n$.  We carry out the proof in several steps.

\medskip

\noindent {\it Step one}: We start by noticing the following estimates for the energy, volume and trace of $v_\e$ in transition region for the cut-off function $\vphi_\e$. The estimates in this step hold in identical form with the same proofs for $v_\e$ defined from $\hat{f}$ as in \eqref{def of veps hat} and for $v_\e$ defined from $f$ as in \eqref{def of veps}; we write the proof for \eqref{def of veps}. First, with $v\in W^{1,p}(\Om)$,
\begin{equation}
  \label{gluing error senza rate}
  \lim_{\e\to0}\max\Big\{\int_{\Om\cap(B_{r_2}\setminus B_{r_1})}|\nabla v_\e|^p,\int_{\Om\cap(B_{r_2}\setminus B_{r_1})}v_\e^\pstar,\int_{(\pa\Om)\cap(B_{r_2}\setminus B_{r_1})}v_\e^\psharp\Big\}=0\,,
\end{equation}
and, second, under the additional assumption that $v\in {\rm Lip}(\Om)$,
\begin{eqnarray}
  &&\label{gluing error nabla p}
  \int_{\Om\cap(B_{r_2}\setminus B_{r_1})}|\nabla v_\e|^p\le C\,\max\Big\{\e^{(1-\b)\,(n-p)/(p-1)},\e^{\b(n-p)}\Big\}\,,
  \\\label{gluing error pstar}
  &&\int_{\Om\cap(B_{r_2}\setminus B_{r_1})}v_\e^\pstar\le  C\,\max\Big\{\e^{(1-\b)\,n/(p-1)},\e^{\b\,n}\Big\}
  \\\label{gluing error psharp}
  &&\int_{(\pa\Om)\cap(B_{r_2}\setminus B_{r_1})}v_\e^\psharp\le C\,\max\Big\{\e^{(1-\b)\,(n-1)/(p-1)}\,,\e^{\b(n-1)}\Big\}\,.
\end{eqnarray}
Indeed, we have $\nabla v_\e=a_\e+b_\e$ for
\[
a_\e=\vphi_\e\,(\nabla g)^*[(\nabla U^{(\e)})\circ g]+(U^{(\e)}\circ g)\,\nabla \vphi_\e\,,\qquad
b_\e=(1-\vphi_\e)\,\nabla v-v\,\nabla\vphi_\e\,.
\]
By \eqref{f inclusions}, and thanks to $|\nabla g|,Jf\le 2$ on $\CC_{r_0}$, we find
\begin{eqnarray*}
  &&\int_{\Om\cap(B_{r_2}\setminus B_{r_1})}|a_\e|^p\le
  C\,\int_{\Om\cap (B_{r_2}\setminus B_{r_1})}\!\!\!|(\nabla g)^*[(\nabla U^{(\e)})\circ g]|^p+\frac{U^{(\e)}(g)^p}{\e^{\b\,p}}
  \\
  &\le&
  C\,\int_{H\cap (B_{Cr_2}\setminus B_{r_1/C})}\!\!\!|\nabla U^{(\e)}|^p+\frac{(U^{(\e)})^p}{\e^{\b\,p}}
  \\
  &=&C\,\int_{H\cap (B_{Cr_2/\e}\setminus B_{r_1/C\e})}\!\!\!|\nabla U|^p+\e^{np/\pstar}\,\frac{U^p}{\e^{\b\,p}}\,\e^n
\\
  &\le&\!\!\!C\,\Big\{\e^{(1-\b)(n-p)/(p-1)}+\frac{\e^p\,\e^{(\b-1)(p^2-n)/(p-1)}}{\e^{\b\,p}}\Big\}=C\,\e^{(1-\b)(n-p)/(p-1)}\,,
\end{eqnarray*}
where in the last inequality we have used \eqref{UT decay p}. Concerning $b_\e$, we notice that if we only know that $v\in W^{1,p}(\Om)$ then  by $v\in L^\pstar(\Om)$ and $\nabla v\in L^p(\Om)$ we find that
\[
\int_{\Om\cap(B_{r_2}\setminus B_{r_1})}\!\!\!\!\!\!\!\!\!\!\!|b_\e|^p\le\int_{\Om\cap B_{r_2}}\!\!\!\!\!\!\!\!\!\!|\nabla v|^p+\int_{\Om\cap B_{r_2}}\frac{|v|^p}{\e^{\b\,p}}\le
\int_{\Om\cap B_{r_2}}|\nabla v|^p+\Big(\int_{\Om\cap B_{r_2}}\!\!\!\!|v|^\pstar\Big)^{p/\pstar}
\]
where the latter quantity converges to $0$ at an non-quantified rate as $\e\to 0^+$ (as stated in \eqref{gluing error senza rate}); while, if $v\in {\rm Lip}(\Om)$, then
\begin{eqnarray*}
    &&\!\!\!\!\!\!\!\!\!\!\!\!\!\int_{\Om\cap(B_{r_2}\setminus B_{r_1})}\!\!\!\!\!\!\!\!\!|b_\e|^p\le C\int_{\Om\cap(B_{r_2}\setminus B_{r_1})}\!\!\!\!\!\!\!\!|v|^p\,|\nabla\vphi_\e|^p+|\nabla v|^p\le C\,r_2^n\,{\rm Lip}(\vphi_\e)^p\le C\e^{\b\,(n-p)}\,,
\end{eqnarray*}
and \eqref{gluing error nabla p} is proved. The other two limits in \eqref{gluing error senza rate} follow similarly (with non-quantified rates), while if $v\in{\rm Lip}(\Om)$, then \eqref{gluing error pstar} and \eqref{gluing error psharp} follow from \eqref{UT decay pstar}, \eqref{UT decay trace psharp}, and
\begin{align*}
  \int_{\Om\cap(B_{r_2}\setminus B_{r_1})}u_\e^\pstar
  & \le C\e^{\beta n} + C\,\int_{H\cap(B_{Cr_2/\e}\setminus B_{r_1/C\e})}U^\pstar\le C\e^{\beta n} +C\,\e^{(1-\b)n/(p-1)}
  \\
  \int_{(\pa\Om)\cap(B_{r_2}\setminus B_{r_1})}u_\e^\psharp
 & \le C\e^{\beta(n-1)}+ C\,\int_{(\pa H)\cap(B_{Cr_2/\e}\setminus B_{r_1/C\e})}U^\psharp
  \le  C\e^{\beta(n-1)} + C\,\e^{(1-\b)\frac{(n-1)}{(p-1)}}\,.
\end{align*}

\medskip

\noindent {\it Step two}: We prove statement (i). By \eqref{gluing error senza rate},
\[
\int_\Om|\nabla v_\e|^p=\int_{\Om\cap B_{r_1}}\!\!\!|(\nabla \hat{g})^*[(\nabla U^{(\e)})\circ \hat{g}]|^p+\int_{\Om\setminus B_{r_2}}|\nabla v|^p+{\rm o}(1)\,,
\]
and, similarly,
\begin{equation}
  \label{nabla v is ok}
  \int_{\Om}|\nabla v|^p\ge\int_{\Om\setminus B_{r_2}}|\nabla v|^p\ge\int_{\Om}|\nabla v|^p+{\rm o}(1)\,.
\end{equation}
Moreover, if we set $E_\e = g(B_{r_1})\subset H $ and $\tilde{E}_\e = E_\e / \e \subset H$, then keeping in mind \eqref{f inclusions}, \eqref{f C1 estimates}, \eqref{f C1 estimate for Jf}, and \eqref{UT decay p}, we have
\begin{align*}
 \int_{\Om\cap B_{r_1}}\!\!\!|(\nabla \hat{g})^*[(\nabla U^{(\e)})\circ \hat{g}]|^p
   =  & \int_{E_\e }\!\!|((\nabla \hat{g})\circ \hat{f})^*[\nabla U^{(\e)}]|^p\,J\hat{f}
  = (1+{\rm o}(1)) \int_{E_\e }\!\!|\nabla U^{(\e)}|^p \\
  =(1+{\rm o}(1)) & \Big\{\int_{H}\!\!|\nabla U|^p
  -\int_{H\setminus \tilde{E_\e}}|\nabla U|^p\Big\}
  = (1+{\rm o}(1))  \int_{H}\!\!|\nabla U|^p\,.
\end{align*}
This proves \eqref{veps converge grad}. Entirely analogous arguments prove \eqref{veps converge volume} and \eqref{veps converge trace}.

\medskip

\noindent {\it Step three}: We now start the proof of statement (ii); in particular, from now on, $\Om$ has $C^2$-boundary,  $n>2p$, and $v_\e$ is defined as in \eqref{def of veps}; moreover, for the sake of brevity, we set $h={\rm H}_{\pa\Om}(0)$. In this step, we discuss the choice of $\b=\b(n,p)\in(0,1)$, which is determined by the rates in \eqref{gluing error nabla p}, \eqref{gluing error pstar} and \eqref{gluing error psharp}, and by the fact that in \eqref{veps stima grad}, \eqref{veps stima vol} and \eqref{veps stima trace} we want errors of size ${\rm o}(\e)$: therefore, by
\begin{eqnarray*}
\b\,\min\Big\{n,n-1,n-p\Big\}>1\qquad &\mbox{iff} & \qquad \b>\frac1{n-p}\,,
\\
(1-\b)\,\min\Big\{\frac{n-p}{p-1},\frac{n-1}{p-1},\frac{n}{p-1}\Big\}>1\,\qquad &\mbox{iff}&\qquad \b<\frac{n+1-2\,p}{n-p}\,,
\end{eqnarray*}
we are led to choose
\begin{equation}
  \label{def of beta}
  \b\in\Big(\frac1{n-p},\min\Big\{1,\frac{n+1-2\,p}{n-p}\Big\}\Big)\,,
\end{equation}
(where the interval appearing in \eqref{def of beta} is non-empty thanks to $n>2p$). With this choice of $\beta$, we have
$
\min\{\b\,(n-p),(1-\b)\,\frac{n-p}{p-1}\}>1\,,
$
and thus deduce from \eqref{gluing error nabla p}, \eqref{gluing error pstar} and \eqref{gluing error psharp} that
\begin{equation}
  \label{gluing error little o}
  \max\Big\{\int_{\Om\cap(B_{r_2}\setminus B_{r_1})}|\nabla v_\e|^p,\ \int_{\Om\cap(B_{r_2}\setminus B_{r_1})}v_\e^\pstar, \ \int_{(\pa\Om)\cap(B_{r_2}\setminus B_{r_1})}v_\e^\psharp\Big\}={\rm o}(\e)\,.
\end{equation}

\medskip

\noindent {\it Step four}: We prove \eqref{veps stima grad}. We first notice that by \eqref{gluing error little o} and \eqref{nabla v is ok} we have
\begin{equation}
  \label{veps stima grad proof 1}
  \int_\Om|\nabla v_\e|^p=\int_{\Om\cap B_{r_1}}\!\!\!|(\nabla g)^*[(\nabla U^{(\e)})\circ g]|^p+\int_{\Om}|\nabla v|^p+{\rm o}(\e)\,.
\end{equation}
Now, by \eqref{f C2 estimate for nabla g} we have
\begin{eqnarray*}
&&\nabla(U^{(\e)}\circ g)\circ f=[(\nabla g)\circ f]^*\,\nabla U^{(\e)}
\\
&=&\nabla U^{(\e)}+\pa_n U^{(\e)}\,\nabla\ell-(\nabla\ell\cdot\nabla U^{(\e)})\,e_n
+x_n\,\sum_{i=1}^{n-1}\k_i\,\pa_i U^{(\e)}\,e_i+{\rm O}(|x|^2)\,|\nabla U^{(\e)}|\,,
\end{eqnarray*}
so that, recalling that $|\nabla\ell|={\rm O}(|x|)$,
\begin{eqnarray*}
\Big|\nabla(U^{(\e)}\circ g)\circ f\Big|^2&=&|\nabla U^{(\e)}|^2
+2\,\big((\nabla\ell\cdot\nabla U^{(\e)})\,e_n-\pa_n U^{(\e)}\,\nabla\ell\big)\cdot\nabla U^{(\e)}
\\
&&+2\,x_n\,\sum_{i=1}^{n-1}\k_i\,(\pa_i U^{(\e)})^2+{\rm O}(|x|^2)\,|\nabla U^{(\e)}|^2
\\
&=&|\nabla U^{(\e)}|^2+2\,x_n\,\sum_{i=1}^{n-1}\k_i\,(\pa_i U^{(\e)})^2+{\rm O}(|x|^2)\,|\nabla U^{(\e)}|^2\,.
\end{eqnarray*}
Now set $a=|\nabla U^{(\e)}|$ and $b=[2\,\sum_{i=1}^{n-1}\k_i\,(\pa_i U^{(\e)})^2]^{1/2}$, so that $0\le b\le C\,a$ for a constant depending on $|A_{\pa\Om}(x_0)|$.
 Since $z\mapsto (1+z)^{p/2}$ is smooth in a neighborhood of $z=0$, we see that if $|x|<1/C$ for a constant $C$ depending on $|A_{\pa\Om}(x_0)|$, then
\begin{eqnarray*}
&&\big(a^2+x_n\,b^2+{\rm O}(|x|^2)\,a^2\big)^{p/2}= a^p\,\big(1+(b/a)^2\,x_n+{\rm O}(|x|^2)\big)^{p/2}
\\
&&=a^p\,\Big(1+\frac{p}2(b/a)^2\,x_n+{\rm O}(|x|^2)\Big)=a^p+\frac{p}2\,a^{p-2}\,b^2\,x_n+{\rm O}(|x|^2)
\end{eqnarray*}
and thus
\begin{align*}
\Big|\nabla(U^{(\e)}\circ g)\circ f\Big|^p\,Jf & =|\nabla U^{(\e)}|^p
\Big(1+p\,x_n\,\sum_{i=1}^{n-1}\k_i\frac{(\pa_i U^{(\e)})^2}{|\nabla U^{(\e)}|^2}+{\rm O}(|x|^2)\Big)\Big(1-x_n\,h+{\rm O}(|x|^2)\Big)\\
& =
|\nabla U^{(\e)}|^p-x_n\,\Big(h-p\,\sum_{i=1}^{n-1}\k_i\,\frac{(\pa_i U^{(\e)})^2}{|\nabla U^{(\e)}|^2}\Big)\,|\nabla U^{(\e)}|^p
+{\rm O}(|x|^2)\,|\nabla U^{(\e)}|^p\,.	
\end{align*}
Then, by \eqref{f inclusions},
\begin{eqnarray}\nonumber
&&\int_{\Om\cap B_{r_1}}\!\!\!|(\nabla g)^*[(\nabla U^{(\e)})\circ g]|^p\le\int_{H\cap B_{C\,r_1}}\big|[(\nabla g)\circ f]^*\,(\nabla U^{(\e)})\big|^p\,Jf
\\\nonumber
& & \le
\int_{H\cap B_{C\,r_1}}|\nabla U^{(\e)}|^p-h\,\int_{H\cap B_{C\,r_1}}  \!\! \!\!  x_n\,|\nabla U^{(\e)}|^p
\\\nonumber
&&+p\,\sum_{i=1}^{n-1}\k_i\,\int_{H\cap B_{C\,r_1}}  \!\! \!\!  x_n\,(\pa_i U^{(\e)})^2\,|\nabla U^{(\e)}|^{p-2}+C\,\int_{H\cap B_{C\,r_1}}  \!\! \!\!  |x|^2\,|\nabla U^{(\e)}|^p
\\\label{sup1}
&&=
\int_{H\cap B_{Cr_1/\e}} \!\! \!\! |\nabla U|^p-h\,\e\,\int_{H\cap B_{Cr_1/\e}} \!\!\!\! x_n\,|\nabla U|^p
\\\nonumber
&&+p\,\e\,\sum_{i=1}^{n-1}\k_i\,\int_{H\cap B_{Cr_1/\e}}  \!\!\!\! x_n\,(\pa_i U)^2\,|\nabla U|^{p-2}+C\,\e^2\,\int_{H\cap B_{Cr_1/\e}}  \!\!\!\!  |x|^2\,|\nabla U|^p\,.
\end{eqnarray}
Now, by the reflection symmetries of $U$ with respect to $\{x_i=0\}$, $i=1,...,n-1$ (recall \eqref{symmetry of U in UTeps}), we have
\[
\int_{H\cap B_R} x_n\,(\pa_i U)^2\,|\nabla U|^{p-2}=\int_{H\cap B_R} x_n\,(\pa_1 U)^2\,|\nabla U|^{p-2}\,,\qquad\forall i=1,...,n-1\,,\forall R>0\,,
\]
and therefore
\begin{eqnarray*}
  \sum_{i=1}^{n-1}\k_i\,\int_{H\cap B_{Cr_1/\e}} x_n\,(\pa_i U)^2\,|\nabla U|^{p-2}
  =h\,\int_{H\cap B_{Cr_1/\e}} x_n\,\big(\pa_1 U)^2\,|\nabla U|^{p-2}\,.
\end{eqnarray*}
Setting
\begin{eqnarray}\label{def of LUBR}
  \L(U,B_R)=\int_{H\cap B_R}x_n\,|\nabla U|^p-p\,x_n\,(\pa_1 U)^2\,|\nabla U|^{p-2}\,,
\end{eqnarray}
we can thus rewrite \eqref{sup1} as
\[
\int_{\Om\cap B_{r_1}}\!\!\!\!\!|(\nabla g)^*[(\nabla U^{(\e)})\circ g]|^p\le\int_{H}|\nabla U|^p
-h\,\e\,\L(U,B_{C\,r_1/\e})+C\,\e^2\,\int_{H\cap B_{C\,r_1/\e}}\!\!\!\!\!\!|x|^2\,|\nabla U|^p\,.
\]
At the same time, $\e^2\,|x|^2\le C\,\e\,r_1\,|x|$ for any $x\in B_{C\,r_1/\e}$,  so
\begin{eqnarray*}
  &&\e^2\,\int_{H\cap B_{Cr_1/\e}}\,|x|^2\,|\nabla U|^p\le C\,\e\,r_1\,\int_{H\cap B_{Cr_1/\e}}\,|x|\,|\nabla U|^p\,
  \\
  &&
  \le C\,\e^{1+\beta}\,\int_{H}\,|x|\,|\nabla U_T|^p +
  C\,\e^{1+\beta}\,\int_{H}\,|x|\,|\nabla V_T|^p\, \leq C(n,p,T)\,\e^{1+\beta},
\end{eqnarray*}
where we have used the facts that $V_T$ is compactly supported and that $n>2p-1$ to guarantee the convergence of the integrals in the final line.
Hence,
\begin{align*}
\int_{\Om\cap B_{r_1}}\!\!\!\!\!|(\nabla g)^*[(\nabla U^{(\e)})\circ g]|^p
& =\int_{H}|\nabla U|^p
-h\,\L(U,B_{C\,r_1/\e})\,\e+{\rm o}(\e)\,\\
& =\int_{H}|\nabla U|^p
-h\,\L(U)\,\e+{\rm o}(\e)\,,
\end{align*}
where the ${\rm o}(\e)$ term depends on $n$, $p$, and $T$, and in the second line we have applied \eqref{UT decay grad p first moment}.
By \eqref{veps stima grad proof 1} we deduce \eqref{veps stima grad}.

\medskip

\noindent{\it Step five}: We prove \eqref{veps stima vol}. We first notice that by \eqref{gluing error little o}, $v\in{\rm Lip}(\Om)$, $r_2^n=\e^{\b\,n}={\rm o}(\e)$ and our choice of $\b$ we have
\begin{equation}
  \label{veps stima vol proof}
\int_\Om v_\e^\pstar=\int_{\Om\cap B_{r_1}}\!\!\!(U^{(\e)}\circ g)^\pstar+\int_\Om v^\pstar+{\rm o}(\e)\,.
\end{equation}
Let $E_\e  = g(B_{r_1} \cap \Omega) \subset H$ and $\tilde{E}_\e = E_\e/\e \subset E$ as in step two.
Then keeping in mind
\eqref{f inclusions} and \eqref{f C2 estimate for Jacobian},
\begin{align*}
\int_{\Om\cap B_{r_1}}& \!\!\!(U^{(\e)}\circ g)^\pstar= \int_{E_\e }(U^{(\e)})^\pstar
  -h\,\int_{E_\e }x_n\,(U^{(\e)})^\pstar + \,\int_{E_\e }{\rm O}(|x|^2)\,(U^{(\e)})^\pstar
  \\
  &= \int_{\tilde{E}_\e }U^{\pstar}-h\,\e\,\int_{\tilde{E}_\e }x_n\,U^{\pstar}
  + \,\int_{E_\e }{\rm O}(|x|^2)\,(U^{(\e)})^\pstar
  \\
&=\int_H U^\pstar-h\,\e \mathcal{M}(U)
  + \Big\{-\int_{H\setminus \tilde{E}_\e } \!\!\!\!U^{\pstar}  + h\e \int_{H \setminus \tilde{E}_\e } x_n U^{\pstar} + \,\int_{E_\e }{\rm O}(|x|^2)\,(U^{(\e)})^\pstar\Big\}\,.
  \end{align*}
By \eqref{UT decay pstar} and \eqref{UT decay pstar first moment}, along with our choice of $\beta$, we see that
\[
-\int_{H\setminus \tilde{E}_\e}U^{\pstar} = o(\e), \qquad    h\,\e\, \int_{H \setminus \tilde{E}_\e } x_n U^{\pstar} = o(\e)\,.
\]
Moreover, since $U = U_T + tV_T$ with $V_T$ compactly supported in $H$ and $|t|\le 1$, we have
\begin{align*}
\int_{E_\e }|x|^2\,(U^{(\e)})^\pstar & \leq C\,r_1\,\e\,\int_{\tilde{E}_\e }|x|\,U^{\pstar}
\leq \e^{1+\beta} \int_{H} |x|\,U^{\pstar}\\
&  \leq C\e^{1+\beta}  \int_{H} |x|\,U_T^{\pstar} + C \e^{1+\beta}\int_{H} |x| V^{\pstar}  = {\rm o}(\e),
\end{align*}
with ${\rm o}(\e)$ depending on $n$, $p$, and $T$. So, the entire term in brackets above can be written as ${\rm o}(\e)$.
Combining this estimate with \eqref{veps stima vol proof}, we deduce \eqref{veps stima vol}.

\medskip

\noindent {\it Step six}: We finally prove \eqref{veps stima trace}. Notice that, by \eqref{gluing error little o}, $v\in{\rm Lip}(\Om)$, $r_2^{n-1}=\e^{\b\,(n-1)}={\rm o}(\e)$ (by the choice of $\b$), we have
\begin{equation}
  \label{veps stima trace proof}
\int_{\pa\Om} v_\e^\psharp=\int_{(\pa\Om)\cap B_{r_1}}\!\!\!(U^{(\e)}\circ g)^\psharp+\int_{\pa\Om} v^\psharp+{\rm o}(\e)\,.
\end{equation}
Now, by $J^{\pa H}f\ge 1$, \eqref{UT decay trace psharp} and our choice of $\b$ we have
\begin{align*}
\int_{(\pa\Om)\cap B_{r_1}}\!\!\!\!(U^{(\e)}\circ g)^\psharp  \ge
  \int_{(\pa H)\cap B_{r_1/C}} \!\!\!\!(U^{(\e)})^{\psharp}
  =\int_{\pa H}U^{\psharp}-\int_{(\pa H)\setminus B_{r_1/C\e}}\!\!\!\!\!\! U^{\psharp}
\ge\int_{\pa H}U^{\psharp}+{\rm o}(\e)\,.
\end{align*}
At the same time, by $J^{\pa H} f\le 1+C\,|x|^2$, we have
\begin{eqnarray*}
&&\int_{(\pa\Om)\cap B_{r_1}}\!\!\!(U^{(\e)}\circ g)^\psharp\le \int_{(\pa H)\cap B_{C\,r_1}}(1+C\,|x|^2)\,(U^{(\e)})^{\psharp}
\\
&&\le\int_{\pa H}U^{\psharp}+C\,\e^2\,\int_{(\pa H)\cap B_{C\,r_1/\e}}|x|^2\,U^{\psharp}
\end{eqnarray*}
where
\begin{eqnarray*}
&&\e^2\,\int_{(\pa H)\cap B_{C\,r_1/\e}}|x|^2\,U^{\psharp}\le C\,\e^2\,\int_0^{C\,r_1/\e}\,\frac{r^2\,r^{n-2}\,dr}{(r^{(n-p)/(p-1)})^\psharp}
\\
&&\le C\,\e^2\,\big(r_1/\e\big)^{1+n-(n-1)\,[p/(p-1)]}
\\
&&\le C\,\e^2\,\big(r_1/\e\big)^{(2p-n-1)/(p-1)}\le C\,\e^2\,\e^{(1-\b)\,(n+1-2p)/(p-1)}\le C\,\e^2={\rm o}(\e)\,,
\end{eqnarray*}
thanks to $n>2p-1$. This completes the proof.
\end{proof}

\section{Existence of minimizers}\label{section existence generalized}  We first establish the existence of generalized minimizers. Recall that $\Phi_\Om^*(T)$ was defined in \eqref{phi star Omega T}.

\begin{theorem}
  \label{theorem main existence part i implies}
  Let $n\ge 2$, $p\in(1,n)$, and let $\Om$ be a bounded open set with $C^1$-boundary in $\R^n$. Then:

  \medskip

  \noindent {\bf (i):} for every $T>0$, $\Phi_\Om(T)=\Phi_\Om^*(T)$;

  \medskip

  \noindent {\bf (ii):} there is a minimizer $(u,\vv,\tt)$ of $\Phi_\Om^*(T)$;

  \medskip

  \noindent {\bf (iii):} if $(u,\vv,\tt)$ is a minimizer of $\Phi_\Om^*(T)$ with $\int_\Om u^\pstar>0$, then $\int_{\pa\Om}u^\psharp>0$,
  \begin{equation}\label{u is also min}
  u\Big/\|u\|_{L^\pstar(\Om)}\,\,\mbox{is a minimizer of}\,\,\, \Phi_\Om\Big(\|u\|_{L^\psharp(\pa\Om)}\Big/\|u\|_{L^\pstar(\Om)}\Big)\,,
  \end{equation}
  and there exists $\l,\s\in\R$ such that
  \begin{equation}
  \label{PDE of generic minimizer theorem}
  \left\{
  \begin{split}
    &-\Delta_p u=\l\,u^{\pstar-1}\,,\quad\hspace{1cm}\mbox{on $\Om$}\,,
    \\
    &|\nabla u|^{p-2}\,\frac{\pa u}{\pa\nu_\Om}=\s\,u^{\psharp-1}\,,\quad\mbox{on $\pa\Om$}\,,
  \end{split}\right .
  \end{equation}
  In particular, $u\in{\rm Lip}(\Om)$. If, in addition, $\vv>0$, then $\l$ and $\s$ are given by
  \begin{equation}
    \label{multipliers gen min equation}
      \l=\vv^{p-\pstar}\,\l_H(\tt/\vv)\,,\qquad   \s=\vv^{p-\psharp}\,\s_H(\tt/\vv)\,,
  \end{equation}
  and, in particular,
 \begin{equation}
    \label{exclusion}
    \frac{\tt}{\vv}\in(0,T_0)\cup(T_E,\infty)\,.
  \end{equation}

\end{theorem}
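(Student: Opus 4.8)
The inequality $\Phi_\Om^*(T)\le\Phi_\Om(T)$ is immediate, since $(u,0,0)\in\Y_\Om(T)$ with $\E(u,0,0)=\|\nabla u\|_{L^p(\Om)}$ for every $u\in\X_\Om(T)$. For the reverse I plan to show $\Phi_\Om(T)\le\E(u,\vv,\tt)$ for every $(u,\vv,\tt)\in\Y_\Om(T)$; this is trivial when $\vv=\tt=0$, so assume $\vv>0$. Let $U_\tau$ minimize $\Phi_H(\tau)$, $\tau=\tt/\vv$ (available by \cite{maggineumayer}); then $\vv\,U_\tau$ has $L^\pstar(H)$-mass $\vv^\pstar$, $L^\psharp(\pa H)$-mass $\tt^\psharp$, and energy $\int_H|\nabla(\vv\,U_\tau)|^p=\vv^p\Phi_H(\tau)^p$. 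When $u$ has positive mass and positive trace, I would glue a concentrating copy of $\vv\,U_\tau$ onto $u$ at a boundary point $x_0$ chosen away from the bulk of the trace of $u$, exactly as in Lemma \ref{lemma boundary concentration}{\bf (i)} (whose proof of part {\bf (i)} uses only $v\in W^{1,p}(\Om)$ and the decay \eqref{eqn: decay first part} of $U$, both available for $U=\vv\,U_\tau$); the resulting $v_\e$ satisfies $\int_\Om|\nabla v_\e|^p\to\E(u,\vv,\tt)^p$, $\int_\Om v_\e^\pstar\to1$, $\int_{\pa\Om}v_\e^\psharp\to T^\psharp$, and Lemma \ref{lemma volume fix} then restores both constraints exactly at a cost ${\rm o}(1)$ in energy, giving $\Phi_\Om(T)\le\E(u,\vv,\tt)$. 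The degenerate cases ($u\equiv0$, or $u$ with zero trace) follow directly by combining $\Phi_\Om\le\Phi_H$ (see \eqref{mn comparison}) with the subadditivity of the half-space functional $\Psi(m,s):=m^{p/\pstar}\Phi_H(s^{1/\psharp}m^{-1/\pstar})^p$ used below. Passing to the infimum over $\Y_\Om(T)$ yields \eqref{phiomega equal phiomegastar}.

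\textbf{Part (ii).} I would take a minimizing sequence $\{u_j\}$ for $\Phi_\Om(T)$ and apply Lemma \ref{lemma lions}, obtaining a weak limit $u$ and concentration data $\{(x_i,\vv_i,\tt_i,\gg_i)\}_{i\in I}$ with $\gg_i\ge\vv_i\Phi_H(\tt_i/\vv_i)$, $\int_\Om u^\pstar+\sum_i\vv_i^\pstar=1$, $\int_{\pa\Om}u^\psharp+\sum_i\tt_i^\psharp=T^\psharp$, and (all measures being carried by the compact set $\ov\Om$) $\Phi_\Om(T)^p\ge\int_\Om|\nabla u|^p+\sum_i\gg_i^p$. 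The plan is to consolidate everything into a single boundary bump. Writing $\Psi(m,s)=\inf\{\int_H|\nabla v|^p:\int_Hv^\pstar=m,\ \int_{\pa H}v^\psharp=s\}$ (equal to $m^{p/\pstar}\Phi_H(s^{1/\psharp}m^{-1/\pstar})^p$ by scale invariance, with $\Psi(m,0)=S(n,p)^pm^{p/\pstar}$), one has $\gg_i^p\ge\Psi(\vv_i^\pstar,\tt_i^\psharp)$, and $\Psi$ is (countably) subadditive --- place two near-optimal profiles far apart along directions tangent to $\pa H$. Hence, if some $\tt_i>0$, setting $\vv=(\sum_i\vv_i^\pstar)^{1/\pstar}$ and $\tt=(\sum_i\tt_i^\psharp)^{1/\psharp}$ gives $(u,\vv,\tt)\in\Y_\Om(T)$ with $(\vv,\tt)\in(0,1]\times(0,T]$ and $\E(u,\vv,\tt)^p=\int_\Om|\nabla u|^p+\Psi(\vv^\pstar,\tt^\psharp)\le\int_\Om|\nabla u|^p+\sum_i\gg_i^p\le\Phi_\Om(T)^p=\Phi_\Om^*(T)^p$, so $(u,\vv,\tt)$ is a minimizer; if $I=\emptyset$ then $u_j\to u$ strongly and $(u,0,0)$ is a minimizer. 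The remaining possibility --- $I\ne\emptyset$ with all $\tt_i=0$ --- I would exclude: there $\gg_i\ge S(n,p)\,\vv_i$, but since $\Phi_H(0)=S(n,p)$ strictly exceeds $\min\Phi_H=\Phi_H(T_0)$, replacing such trace-free concentration by one boundary bump carrying the combined volume $\sum_i\vv_i^\pstar$ together with a small trace $\delta>0$ costs strictly less than $S(n,p)^p(\sum_i\vv_i^\pstar)^{p/\pstar}$, while restoring the trace of $u$ by $-\delta^\psharp$ via Lemma \ref{lemma volume fix} costs only ${\rm O}(\delta^\psharp)={\rm o}(\delta)$; this produces an admissible competitor of energy $<\Phi_\Om^*(T)^p$, a contradiction.

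\textbf{Part (iii): the minimization identity, regularity, and signs.} Let $(u,\vv,\tt)$ be a minimizer with $a:=\int_\Om u^\pstar>0$, and set $c=a^{1/\pstar}$, $u_{\rm norm}=u/c$, $T'=\|u\|_{L^\psharp(\pa\Om)}/\|u\|_{L^\pstar(\Om)}$. If $\int_{\pa\Om}u^\psharp=0$ then $u$ has zero trace, $\vv>0$, $\tt=T$, and extending $u$ by zero the Sobolev inequality on $\R^n$ (with the non-strict subadditivity of $\Psi$ and the non-compactness of Sobolev extremals) gives $\E(u,\vv,T)^p>S(n,p)^pa^{p/\pstar}+\Psi(1-a,T^\psharp)\ge\Psi(1,T^\psharp)=\Phi_H(T)^p\ge\Phi_\Om^*(T)^p$, against minimality; hence $\int_{\pa\Om}u^\psharp>0$ and $T'>0$. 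Next, $u_{\rm norm}\in\X_\Om(T')$; any strictly better competitor $w$ for $\Phi_\Om(T')$, rescaled to $c\,w$, would have the same $L^\pstar$-mass and trace as $u$, giving $(c\,w,\vv,\tt)\in\Y_\Om(T)$ with strictly smaller $\E$ --- impossible --- so $u_{\rm norm}$ (and hence $u$) minimizes $\Phi_\Om(T')$, which is \eqref{u is also min}. Then $u$ solves \eqref{PDE of generic minimizer theorem} for some $\l,\s$ by the variational argument in the proof of Lemma \ref{lemma what about minimizers}, and is bounded and locally Lipschitz by Moser iteration and interior $p$-Laplacian regularity (neither needing $C^2$ of $\pa\Om$). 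Finally, $u_{\rm norm}>0$ (replace $u$ by $|u|$, then apply Harnack / the strong maximum principle), so $\int_\Om u^{\pstar-1}>0$ and $\int_{\pa\Om}u^{\psharp-1}>0$.

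\textbf{Part (iii): the multiplier formulas and \eqref{exclusion}.} Assume also $\vv>0$; then $\vv<1$ and $\tt<T$, so $(\vv,\tt)$ is an interior minimum point of $\E$. Perturbing $(\vv,\tt)$ and restoring the two constraints with the correcting variations of Lemma \ref{lemma volume fix}, and computing the first variation of $\E^p$ from the Euler--Lagrange equation for $u$ (so that $\tfrac1p\tfrac{d}{dt}\int_\Om|\nabla u|^p=\l\int_\Om u^{\pstar-1}\dot u+\s\int_{\pa\Om}u^{\psharp-1}\dot u$) together with the identities \eqref{lambdaT sigmaT basic identities} --- which give $\pa_\vv[\vv^p\Phi_H(\tt/\vv)^p]=p\,\vv^{p-1}\l_H(\tt/\vv)$ and $\pa_\tt[\vv^p\Phi_H(\tt/\vv)^p]=p\,\vv^{p-\psharp}\s_H(\tt/\vv)\,\tt^{\psharp-1}$, and here the corrected expression for $\s_H$ from the footnote to \eqref{lambdaT sigmaT basic identities} is essential --- yields \eqref{multipliers gen min equation}. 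For \eqref{exclusion}: the balance condition \eqref{balance condition} applied to $u_{\rm norm}$ rescales to $\l\int_\Om u^{\pstar-1}+\s\int_{\pa\Om}u^{\psharp-1}=0$, and since both integrals are positive, $\l$ and $\s$ cannot have the same sign, and cannot have one of them vanish (as $\l_H$ vanishes only at $T_E$ and $\s_H$ only at $T_0\neq T_E$, a single vanishing multiplier would leave the other nonzero and contradict the balance); as $\vv^{p-\pstar},\vv^{p-\psharp}>0$, the same holds for $\l_H(\tt/\vv),\s_H(\tt/\vv)$, so by the sign tables \eqref{sigmaT behavior}--\eqref{lambdaT behavior} necessarily $\tt/\vv\in(0,T_0)\cup(T_E,\infty)$. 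I expect the two main obstacles to be (a) the strict-inequality comparison ruling out purely trace-free concentration in part (ii), and (b) making the genuinely two-parameter first variation behind \eqref{multipliers gen min equation} rigorous --- which is precisely why the uniform correcting variations of Lemma \ref{lemma volume fix} are needed.
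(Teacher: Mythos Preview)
Your proposal is correct and follows essentially the same route as the paper: Part (i) via boundary gluing (Lemma \ref{lemma boundary concentration}{\bf(i)}) plus constraint correction (Lemma \ref{lemma volume fix}); Part (ii) via concentration--compactness and consolidation of bubbles (the paper phrases your subadditivity of $\Psi$ as constructing competitors $W_j\in\X_H(\tt_c/\vv_c)$, and deduces $\int_{\pa\Om}u^\psharp>0$ in (iii) more quickly from the non-existence of minimizers of $\Phi_\Om(0)$ once \eqref{u is also min} is in hand); Part (iii) via \eqref{u is also min}, the Euler--Lagrange equation, and the sign tables \eqref{sigmaT behavior}--\eqref{lambdaT behavior}. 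Two small remarks: for \eqref{multipliers gen min equation} the paper perturbs $u$ by $\delta\vphi$ and adjusts $(\vv,\tt)$ explicitly (rather than your inverse parametrization), which is marginally cleaner but equivalent; and in your exclusion of purely trace-free concentration in (ii), the relevant comparison is not ``$O(\delta^\psharp)=o(\delta)$'' but rather $S^p-\Phi_H(\tau)^p\gg\tau^\psharp$ as $\tau\to0^+$, which does hold since $\s_H(\tau)\to-\infty$ combined with \eqref{lambdaT sigmaT basic identities} gives $S^p-\Phi_H(\tau)^p=p\int_0^\tau|\s_H(s)|s^{\psharp-1}ds$.
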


\begin{proof}
{\it Step one}: Since $(u,0,0)\in\Y_\Om(T)$ if $u\in\X_\Om(T)$, we have $\Phi_\Om^*(T)\le\Phi_\Om(T)$. To prove the converse inequality it is enough to show that for every $(u,\vv,\tt)\in\Y_\Om(T)$,
\begin{equation}
  \label{phistar equal phi proof}
  \mbox{$\exists\, u_j\in\X_\Om(T)$ s.t.}\,\,\,
\lim_{j\to\infty}\int_\Om|\nabla u_j|^p=\int_\Om|\nabla u|^p+\vv^p\,\Phi_H\Big(\frac{\tt}{\vv}\Big)^p\,.
\end{equation}
Looking back at the definition of $\Y_\Om(T)$ in the paragraph preceding the statement of Theorem~\ref{theorem main existence}, we can assume without loss of generality that $\vv>0$ and $\tt>0$.
 Moreover, given $(u,\vv,\tt)\in\Y_\Om(T)$ with $\vv$ and $\tt$ positive we can easily find $(u_j,\vv_j,\tt_j)\in\Y_\Om(T)$ with $\vv_j$, $\tt_j$, $\int_\Om u_j^\pstar$, and $\int_{\pa\Om}u_j^\psharp$ positive and such that $\E(u_j,\vv_j,\tt_j)\to \E(u,\vv,\tt)$. By a diagonal argument, it is thus sufficient proving \eqref{phistar equal phi proof} under the assumption that $\int_\Om u^\pstar$ and $\int_{\pa\Om}u^\psharp$ are positive.
  This said, we apply Lemma \ref{lemma boundary concentration}{\bf (i)} with
\[
v=\frac{u}{\vv}\,,\qquad U=U_{\tt/\vv}\,,
\]
to find functions $v_j$ with $v_j=v$ on $\Om\setminus B_{2\,\e_j}(x_0)$ for some $x_0\in\pa\Om$ and $\e_j\to 0^+$, and with
\begin{eqnarray*}
&&\int_\Om|\nabla v_j|^p=\frac1{\vv^p}\int_\Om|\nabla u|^p+\Phi_H(\tt/\vv)^p+G_j\,,
\\
&&\int_\Om v_j^\pstar=\frac1{\vv^\pstar}\int_\Om u^\pstar+1+V_j
\\
&& \int_{\pa\Om} v_j^\psharp=\frac1{\vv^\psharp}\int_{\pa\Om}u^\psharp+(\tt/\vv)^\psharp+T_j\,,
\end{eqnarray*}
where $G_j$, $V_j$, $T_j\to 0$ as $j\to\infty$ at a rate depending on $n$, $p$, $\Om$, $\tt/\vv$ and $u$ only. By Lemma \ref{lemma volume fix}, there exist $\eta$ and $C$ depending on $n$, $p$, $\Om$, $\tt/\vv$ and $u$, but independent from $j$, such that for any $(a_j,b_j)$ with $|a_j| + |b_j|<\eta,$ we have  functions $w_j$ such that
  \begin{equation}
    \label{vt fix trace and volume fixed j}
    \int_{\pa\Om} |w_j|^\psharp=a_j+\int_{\pa\Om}|v_j|^\psharp\,,\qquad  \int_\Om |w_j|^\pstar=b_j+\int_\Om|v_j|^\pstar\,,
  \end{equation}
  \begin{equation}
    \label{vt fix cost for the gradient j}
    \Big|\int_\Om|\nabla w_j|^p-\int_\Om|\nabla v_j|^p\Big|\le C\,\big(|a_j|+|b_j|\big)\,.
  \end{equation}
  For $j$ large enough we can apply this statement with $a_j=-T_j$ and $b_j=-V_j$ to find a sequence $\{w_j\}_j$ with
   \begin{equation}
    \label{vt fix trace and volume fixed jj}
    \int_{\pa\Om} |w_j|^\psharp=\frac1{\vv^\psharp}\int_{\pa\Om}u^\psharp+(\tt/\vv)^\psharp=\frac{T^\psharp}{\vv^\psharp}\,,\qquad  \int_\Om |w_j|^\pstar=\frac1{\vv^\pstar}\int_\Om u^\pstar+1=\frac1{\vv^\pstar}\,,
  \end{equation}
  \begin{equation}
    \label{vt fix cost for the gradient jj}
    \Big|\int_\Om|\nabla w_j|^p-\frac1{\vv^p}\int_\Om|\nabla u|^p-\Phi_H(\tt/\vv)^p-G_j\Big|\le C\,\big(|T_j|+|V_j|\big)\,.
  \end{equation}
  Setting $u_j=\vv\,w_j$, we obtain a sequence in $\X_\Om(T)$ that satisfies \eqref{phistar equal phi proof}.

  \medskip

  \noindent {\it Step two}: We prove that there is a minimizer for the generalized problem $\Phi_\Om^*(T)$. By the argument in step one we can find a sequence $\{u_j\}_j$ in $\X_\Om(T)$ such that $\int_\Om|\nabla u_j|^p\to\Phi_\Om^*(T)^p$. By Lemma \ref{lemma lions}, the measures $\mu_j$, $\nu_j$ and $\tau_j$ defined in \eqref{def of muj nuj tauj} have subsequential weak-star limits $\mu$, $\nu$ and $\tau$ satisfying \eqref{nu}, \eqref{tau} and \eqref{mu} and \eqref{mu lower bound}.
  In particular, there is an at most countable set $\{x_i\}_{i\in I}\subset\ov\Om$ and corresponding $\vv_i>0$ and $\tt_i\ge0$ for every $i\in I$,
such that
  \begin{equation}
    \label{from cc 1}
      \Phi_\Om^*(T)^p=\lim_{j\to\infty}\int_\Om|\nabla u_j|^p\ge\int_\Om|\nabla u|^p+ S^p\,\sum_{i\in I\setminus I_{{\rm bd}}}\vv_i^p+\sum_{i\in I_{\rm bd}}\vv_i^p\,\Phi_H(\tt_i/\vv_i)^p\,,
  \end{equation}
  where  $u$ is the  subsequential weak limit of $u_j$, and
  \begin{equation}
    \label{from cc 2}
      1=\int_\Om u^\pstar+\sum_{i\in I}\,\vv_i^\pstar\,,\qquad  T^\psharp=\int_{\pa\Om} u^\psharp+\sum_{i\in I_{\rm bd}}\,\tt_i^\psharp\,.
  \end{equation}
  Now set
  \[
  \vv_c^\pstar=\sum_{i\in I}\,\vv_i^\pstar\,,\qquad \tt_c^\psharp=\sum_{i\in I_{\rm bd}}\,\tt_i^\psharp\,.
  \]
  By an immediate adaptation of the  proof of Lemma \ref{lemma boundary concentration} we can easily construct a sequence $\{W_j\}_j$ in $\X_H(\tt_c/\vv_c)$ with the property that
  \[
  \int_H|\nabla W_j|^p\to \sum_{i\in I}\Big(\frac{\vv_i}{\vv_c}\Big)^p\,\Phi_H(\tt_i/\vv_i)^p\,.
  \]

  Since $W_j\in\X_H(\tt_c/\vv_c)$ implies $\int_H|\nabla W_j|^p\ge\Phi_H(\tt_c/\vv_c)^p$, we deduce from \eqref{from cc 1} that
  \[
  \Phi_\Om^*(T)^p\ge\int_\Om|\nabla u|^p+ \vv_c^p\,\Phi_H(\tt_c/\vv_c)^p\,,
  \]
  while \eqref{from cc 2} gives $(u,\vv_c,\tt_c)\in\Y_\Om(T)$. This proves that $(u,\vv_c,\tt_c)$ is a minimizer of $\Phi_\Om^*(T)$.

  \medskip

  \noindent {\it Step three}: We finally prove statement (iii). If $(u,\vv,\tt)$ is a minimizer of $\Phi_\Om^*(T)$ with $\int_\Om u^\pstar>0$, it is immediate to deduce \eqref{u is also min}, and since $\Phi_\Om(0)$ does not admit minimizers, it must also be $\int_{\pa\Om}u^\psharp>0$. By Lemma \ref{lemma what about minimizers}, the Euler-Lagrange equation \eqref{PDE of generic minimizer theorem} for $u$ holds for some $\l,\s\in\R$ and $u\in{\rm Lip}(\Om)$. Assuming now that $\vv>0$, we can prove \eqref{multipliers gen min equation} by noticing that, given $\vphi\in C^\infty_c(\R^n)$, if we define
  \[
  \a(\de)=\Big(1-\int_\Om(u+\de\,\vphi)^\pstar\Big)^{1/\pstar}-\vv\,,\qquad
  \b(\de)=\Big(T^\psharp-\int_{\pa\Om}(u+\de\,\vphi)^\psharp\Big)^{1/\psharp}-\tt\,,
  \]
  then there is $\de_0>0$ such that $(u+\de\,\vphi,\vv+\a(\de),\tt+\b(\de))\in\Y_\Om(T)$ for every $|\de|<\de_0$. In particular,
  \begin{eqnarray*}
  0=\frac{d}{d\delta}\Big|_{\de=0}\,\int_\Om|\nabla (u+\de\vphi)|^p+(\vv+\a(\de))^p\,\Phi_H\Big(\frac{\tt+\b(\de)}{\vv+\a(\de)}\Big)^p\,,
  \end{eqnarray*}
  and exploiting \eqref{PDE of generic minimizer theorem} as well as
  \[
  \a'(0)=-\vv^{1-\pstar}\,\int_\Om u^{\pstar-1}\vphi\,,\qquad  \b'(0)=-\tt^{1-\psharp}\,\int_{\pa\Om} u^{\psharp-1}\vphi\,,
  \]
  and \eqref{lambdaT sigmaT basic identities} (i.e. $\Phi_H(T)=\l_H(T)+\s_H(T)\,T^\psharp$ and $\s_H(T)\,T^{\psharp-1}=\Phi_H(T)^{p-1}\Phi_H'(T)$ for every $T>0$), we see that
  \begin{eqnarray*}
    0&=&\int_\Om|\nabla u|^{p-2}\nabla u\cdot\nabla\vphi+    \vv^{p-1}\,\Phi_H(\tt/\vv)^{p-1}\,\Phi_H'(\tt/\vv)\,\b'(0)
    \\
    &&\hspace{3.4cm}+\Big\{\vv^{p-1}\,\Phi_H(\tt/\vv)^p-\tt\,\vv^{p-2}\,\Phi_H(\tt/\vv)^{p-1}\,\Phi_H'(\tt/\vv)\Big\}\a'(0)
    \\
    &=&\l\,\int_\Om u^{\pstar-1}\,\vphi+\s\,\int_{\pa\Om}u^{\psharp-1}\vphi-\vv^{p-1}\,\tt^{1-\psharp}\,\Phi_H(\tt/\vv)^{p-1}\,\Phi_H'(\tt/\vv)\,\int_{\pa\Om} u^{\psharp-1}\vphi
    \\
    &&-\vv^{p-\pstar}\,\Big\{\Phi_H(\tt/\vv)^p-(\tt/\vv)\,\Phi_H(\tt/\vv)^{p-1}\,\Phi_H'(\tt/\vv)\Big\}\int_\Om u^{\pstar-1}\vphi
    \\
    &=&\big(\s-\vv^{p-\psharp}\s_H(\tt/\vv)\big)\,\int_{\pa\Om} u^{\psharp-1}\vphi
    +\big(\l-\vv^{p-\pstar}\l_H(\tt/\vv)\big)\,\int_{\Om} u^{\pstar-1}\vphi\,.
    \end{eqnarray*}
  Testing with $\vphi\in C^\infty_c(\Om)$ we find $\l=\vv^{p-\pstar}\l_H(\tt/\vv)$, and testing with $\vphi=1$ on $\ov{\Om}$ then gives $\s=\vv^{p-\psharp}\s_H(\tt/\vv)$. We finally prove \eqref{exclusion}, i.e. $\tt/\vv\not\in[T_0,T_E]$. Indeed, combining the balance condition \eqref{balance condition} with \eqref{multipliers gen min equation} we find
  \begin{equation}
    \label{exclusion proof}
      \vv^{p-\pstar}\,\l_H(\tt/\vv)\,\int_\Om u^{\pstar-1}+\vv^{p-\psharp}\,\s_H(\tt/\vv)\,\int_{\pa\Om} u^{\psharp-1}=0\,,
  \end{equation}
  where by \eqref{sigmaT behavior} and \eqref{lambdaT behavior} we have $\l_H>0$ in $[T_0,T_E)$ and $\s_H>0$ on $(T_0,T_E]$ (with $\l_H(T_E)=\s_H(T_0)=0$ by continuity). If $\tt/\vv\in [T_0,T_E)$, then $\eqref{exclusion proof}$ implies $\int_\Om u^\pstar=0$, a contradiction; if $\tt/\vv=T_E$, then \eqref{exclusion proof} gives $u=0$ on $\pa\Om$, so that, by \eqref{u is also min}, $u$ is a minimizer of $\Phi_\Om(0)$, and thus $u$ is optimal in the Sobolev inequality on $\R^n$, so that $\Om=\R^n$, contradicting the fact that $\Om$ is bounded.
\end{proof}

We are now ready to prove Theorem \ref{theorem main existence}.

\begin{proof}
  [Proof of Theorem \ref{theorem main existence}] Statement (i) is an immediate consequence of Theorem \ref{theorem main existence part i implies}. We thus focus on statement (ii), and assume that $\Om$ is of class $C^2$
  and that  $n>2p$.  We want to prove that if $(u,\vv,\tt)$ is a minimizer of $\Phi_\Om^*(T)$, then $\vv=\tt=0$. We assume by way of contradiction that either $\vv>0$ {\it or} $\tt>0$; recalling the definition of  $\Y_\Om(T)$, this implies that $\vv>0$ {\it and} $\tt>0$. We apply Lemma \ref{lemma boundary concentration} {\bf (ii)} with the choice $(v,T)=(u/\vv,\tau)$ at a point $x_0\in\pa \Om$ of positive mean curvature, noting that  if $\vv=1$ then $v \equiv 0$ and that $v$ is Lipschitz continuous if $\vv \in (0,1)$ thanks to \eqref{u is also min} and Lemma \ref{lemma what about minimizers}. Then, for every $U\in\U_{\tau}$, we have
   \begin{eqnarray}\label{veps stima grad j}
&&\!\!\!\!\!\!\!\!\!\!\!\!\!\!\!\!\!\!\!\!\!\!\int_{\Om}|\nabla v_\e|^p\le\int_{H}|\nabla U|^p+\int_\Om|\nabla v|^p-{\rm H}_{\pa \Om}(0)\,\L(U)\,\e+{\rm o}(\e)\,,
\\
&&\!\!\!\!\!\!\!\!\!\!\!\!\!\!\!\!\!\!\!\!\!\!\!\int_{\Om}v_\e^\pstar=\int_H U^\pstar+\int_\Om v^\pstar
\!-{\rm H}_{\pa \Om}(0)\,\M(U)\,\e+{\rm o}(\e)\,,
\label{veps stima vol j}
\\\label{veps stima trace j}
&&\!\!\!\!\!\!\!\!\!\!\!\!\!\!\!\!\!\!\!\!\!\!\!\int_{\pa \Om}v_\e^\psharp=\int_{\pa H}U^{\psharp}+\int_{\pa\Om}v^\psharp+{\rm o}(\e)\,,
\end{eqnarray}
where $\L(U)$ and $\M(U)$ are defined in \eqref{def of LUBR} and \eqref{def of Mu}. We apply this with $U\in\U_{\tau}$ given by
\[
U=U_{\tau}+b\,\e\,V_{\tau}\,,\qquad |\e|<\frac{1}{|b|}\,,\qquad b=\frac{{\rm H}_{\pa \Om}(0)\,\M(U_\tau)}{\pstar}\,,
\]
The reason for the choice of $b$ will become apparent in a moment. Indeed, thanks to \eqref{UT variation gradient}, \eqref{UT variation volume} and \eqref{UT variation trace}, we have
\begin{eqnarray*}
  &&\int_{H}|\nabla U|^p=\Phi_H(\tau)^p+p\,b\,\l_H(\tau)\e+{\rm o}\big(\e\big)\,,
  \\
  &&\int_{H}U^\pstar=1+\pstar\,b\,\e+{\rm o}\big(\e\big)\,,
  \qquad \int_{\pa H}U^\psharp=\tau^\psharp\,,
  \end{eqnarray*}
which, combined with \eqref{veps stima grad j}, \eqref{veps stima vol j}, \eqref{veps stima trace j} and
\[
\frac{\Phi_\Om(T)^p}{\vv^p}=\Phi_H(\tau)^p+\int_\Om|\nabla v|^p,\qquad \frac1{\vv^\pstar}=1+\int_\Om\,v^\pstar\,,\qquad (T/\vv)^\psharp=\tau^\psharp+\int_{\pa\Om}v^\psharp\,,
\]
implies that $w_\e=\vv\,v_\e$ satisfies
\begin{eqnarray}\label{veps stima grad j end}
&&\int_{\Om}|\nabla w_\e|^p\le\Phi_\Om(T)^p
+ \Big\{p\,b\,\l_H(\tau)-{\rm H}_{\pa \Om}(0)\,\L(U)\Big\}\,\vv^p\e+{\rm o}(\e)\,,
\\
&&\int_{\Om}w_\e^\pstar=1
\!+\big\{\pstar\,b-{\rm H}_{\pa \Om}(0)\,\M(U)\big\}\,\vv^{\pstar}\e+{\rm o}(\e)=1+{\rm o}(\e)
\label{veps stima vol j end}
\\\label{veps stima trace j end}
&&\int_{\pa \Om}w_\e^\psharp=T^\psharp+{\rm o}(\e)\,,
\end{eqnarray}
where in \eqref{veps stima grad j end} we have used the choice of $b$ to deduce
\begin{eqnarray*}
\M(U)=\M(U_\tau)+\pstar\int_Hx_n\,U_\tau^{\pstar-1}\,V_\tau\,b\,\e+{\rm o}(\e)\,,\quad\big\{\pstar\,b-{\rm H}_{\pa \Om}(0)\,\M(U)\big\}\,\e={\rm o}(\e)\,.
\end{eqnarray*}
In the same spirit, by
\[
\lim_{\e\to 0^+}|\L(U_\tau+b\,\e\,V_\tau)-\L(U_\tau)|=0\,,
\]
we deduce from \eqref{veps stima grad j end} that
\begin{equation}\label{veps stima grad j end 2}
\begin{split}
	\int_{\Om}|\nabla w_\e|^p & \le\Phi_\Om(T)^p
-\Big\{\L(U_\tau)-\frac{(n-p)}n\,\M(U_\tau)\,\l_H(\tau)\Big\}\,{\rm H}_{\pa \Om}(0)\,\vv^p\e+{\rm o}(\e)\\
& 	\leq \Phi_\Om(T)^p
-C(n,p, \tau) \,{\rm H}_{\pa \Om}(0)\,\vv^p\e+{\rm o}(\e)\,,
\end{split}
\end{equation}
where in the second line we apply Lemma~\ref{lemma key}. We thus conclude that
\begin{eqnarray}\label{veps stima grad jj}
&&\!\!\!\!\!\!\!\!\!\!\!\!\!\!\!\!\!\!\!\!\!\!\int_{\Om}|\nabla w_\e|^p\le \int_\Om|\nabla u|^p+\vv^p\,\Phi_H(\tt/\vv)^p
-M\,\vv^p\,\e+{\rm o}(\e)\,.
\end{eqnarray}
It remains to modify the functions $w_\e$ to obtain $w_\e^* \in \X_\Om(T)$ also satisfying \eqref{veps stima grad jj}, allowing us to conclude the proof of the theorem by choosing $\e$ sufficiently small.  We will distinguish between two cases, applying Lemma~\ref{lemma volume fix} in different ways in the two cases.

\medskip

\noindent{\it Case one}: Suppose first that $\vv <1$ and thus  $\int_\Om u^{\pstar}>0$. This also implies that $\int_{\pa\Om}u^\psharp>0$ by Theorem \ref{theorem main existence part i implies}-(iii).  Taking into account \eqref{veps stima vol j end} and \eqref{veps stima trace j end}, we can thus apply Lemma \ref{lemma volume fix} in an analogous way to step one of the proof of Theorem \ref{theorem main existence part i implies} in order to slightly modify $w_\e$ into $w_\e^*\in\X_\Om(T)$ with
\begin{eqnarray*}
&&\Phi_\Om(T)^p\le \int_{\Om}|\nabla w_\e^*|^p =  \int_{\Om}|\nabla w_\e|^p+\,(\vv^\psharp+\vv^\pstar)\,{\rm o}(\e)
\\
&&\le \int_\Om|\nabla u|^p+\vv^p\,\Phi_H(\tt/\vv)^p-\frac{M}2\,\vv^p \, \e  =\Phi_\Om(T)^p-\frac{M}2\,\vv^p\,\e<\Phi_\Om(T)^p\,,
\end{eqnarray*}
thus reaching a contradiction.

\medskip

\noindent{\it Case two}: Next, suppose that $\vv=1$.  So, $\Phi_\Omega(T) = \Phi_H(T)$,  $u=v \equiv 0$ and $v_\e = \vphi_\e \,  (U^{(\e)}\circ g).$
In this case, we will pull the relevant quantities back to the half space $H$ and apply Lemma~\ref{lemma volume fix} there to correct the volume and trace constraints. More specifically, for $\e<\e_0$, the support of $\vphi_\e$ is entirely contained in the domain of the diffeomorphism $f_{x_0}$, and so we can define $\Psi_\e : H\to \R$ by $\Psi_\e(y) = \vphi_\e \circ f(\e \,y)$. In this way,  we can rewrite
\[
w_\e = v_\e = (\Psi_\e\, U)^{(\e)} \circ g.
\]
Thanks to \eqref{f inclusions}, $\Psi_\e$ is identically equal to one in $B_{\e^{\beta-1}/C}\cap H$ and vanishes outside of  $B_{C\,\e^{\beta-1}}\cap H$. Using the area formula, we rewrite \eqref{veps stima vol j end}, \eqref{veps stima trace j end}, and \eqref{veps stima grad jj} as
 \begin{align*}
 \int_H (\Psi_\e \, U  )^{\pstar}\, m_\e   & = \int_\Omega w_\e^{\pstar} dx  = 1+ o(\e )\,,\\
  \int_{\pa H} (\Psi_\e\, U)^{p^\sharp}\,\hat {m}_\e & =  \int_{\pa \Omega} w_{\e}^{p^\sharp}  = T^{p^\sharp} + o(\e ).
 \end{align*}
 \begin{align*}
\int_H | A_\e\, [\na (\Psi_\e\, U)]|^p\,m_\e   =  \int_\Omega |\na w_\e|^p   & \leq \Phi_H(T) -M \e + o(\e),
\end{align*}
where we have set
\[
m_\e(x) = Jf(\eps \, x)\,,\qquad \hat{m}_\e(x)= J^{\pa H} f(\eps\,x)\,,\qquad A_\e(x) = (\na g \circ f(\e \, x))^*\,.
\]
We now repeat the argument used in the proof of Lemma \ref{lemma volume fix}: exploiting the fact that
\begin{equation}
  \label{hps on veps H}
  \Psi_\e\, U=U_T \qquad\mbox{on $(H\cap B_{R})\setminus (\spt V_T)$}\,,\qquad R=\frac{\e_0^{\b-1}}{C}\,,
\end{equation}
as well as that both $\int_H (\Psi_\e \, U  )^{\pstar}\, m_\e $ and $\int_{\pa H} (\Psi_\e\, U)^{p^\sharp}\,\hat {m}_\e$ are positive and finite, we can easily find $\psi\in C^\infty_c((H\cap{B}_R)\setminus (\spt V_T))$ and $\vphi\in C^\infty_c((\R^n\cap{B}_R)\setminus (\spt V_T))$ such that
  \begin{eqnarray}
  \label{vt fix choice H}
    &&\int_{H}(\Psi_\e\,U_T)^{\pstar-1}\,m_\e\,\psi=\int_{\pa H}(\Psi_\e\,U_T)^{\psharp-1}\,\hat{m}_\e\vphi=1\,,
    \\
    \nonumber
    &&\int_{H}(\Psi_\e\,U_T)^{\pstar-1}\,m_\e\,\vphi=\int_{\pa H}(\Psi_\e\,U_T)^{\psharp-1}\,\hat{m}_\e\,\psi=0\,.
  \end{eqnarray}
  Correspondingly, we consider the maps $h_\e:\R^2\to\R^2$ by
  \[
  h_\e(s,t)\!=\!\Big(\int_{\pa H} \big(|v_\e+s\,\vphi+t\,\psi|^\psharp -|v_\e|^\psharp\big)\,\hat{m}_\e  ,\ \int_H \big(|v_\e+s\,\vphi+t\,\psi|^\pstar -|v_\e|^\pstar\big)\,m_\e \Big)\,.
  \]
  By \eqref{hps on veps H}, $h_\e\in C^{1,\a}(\R^2;\R^2)$ for some $\a=\a(n,p)\in(0,1)$, with
  \[
  \sup_{\e<\e_0}\|h_\e\|_{C^{1,\a}(\R^2;\R^2)}<\infty\,;
  \]
  moreover, $h_\e(0,0)=0$ and, by \eqref{vt fix choice H},
  \[
  \nabla h_\e(0,0)=\Big(\begin{array}{c c}
    \psharp & 0
    \\
    0 & \pstar
  \end{array}\Big)\,.
  \]
We can thus apply the inverse function theorem uniformly in $\e$, to obtain functions $W_\e^*: H\to \R$ with support in $B_{C\e^{\beta-1}}$ such that
 \begin{equation}
 	\label{eqn: all good on H}
 	\begin{split}
 	\int_H (W_{\eps}^*)^{\pstar} m_\e  =1\,, & \qquad
 	\int_{\pa H} (W_{\eps}^*)^{p^\sharp} \hat{m}_\e  = T^{p^\sharp}\,, \\
 	\bigg| \int_H | A_\e[ \na (\Psi_\e\, U)]|^p m_\e  \ &- \ \int_H | A_\e[ \na  W_\eps^*]|^p m_\e \bigg| = o(\eps).
 	\end{split}
 \end{equation}
 Finally, for $\e <\e_0$, define $w_\e^* : \Omega \to \R$ by $w_\e ^* = (W_\e^*)^{(\e)} \circ g$.  Changing variables once again, \eqref{eqn: all good on H} tells us that $w_\e \in \X_\Omega(T)$ and that
 \begin{align*}
\Phi_\Omega(T)^p  & \leq   \int_\Omega |\na w_\e^*|^p  = \int_\Omega | \na w_\e |^p + o(\e)\\
 & \leq \Phi_H(T)^p   -M \e +{\rm o}(\e)  \leq \Phi_H(T)^p   -\frac{M \e }{2 } < \Phi_H(T) =\Phi_\Omega(T),
 \end{align*}
 giving us a contradiction in this case as well. This completes the proof of the theorem.
\end{proof}

\section{Rigidity theorems for best {S}obolev inequalities}\label{section rigidity sobolev} In this section, we prove Theorem~\ref{theorem main rigidity}.

\begin{proof}
  [Proof of Theorem \ref{theorem main rigidity}] Rigidity under assumption (ii) is immediate by combining Theorem \ref{theorem main existence}-(ii) with \eqref{mv criterion for rigidity} and \eqref{mn comparison for balls}. Let us now consider assumption (i), namely, there is $T_*>0$ such that
  \begin{equation}
    \label{rigidity hp}
    \Phi_\Om(T)=\Phi_B(T)\qquad\forall T\in(0,T_*)\,.
  \end{equation}
   Without loss of generality we can assume that $T_*<\ISO(B)^{1/\psharp}$. We argue by contradiction and  assume that  $\Om$ is not a ball.

  \medskip

  By Theorem \ref{theorem main existence part i implies}, for every $T>0$ there is $(u_T,\vv_T,\tt_T)$ a minimizer of $\Phi_\Om^*(T)$. The basic idea of the proof will be to show that the trace-to-volume ratio of $u_T$ must be, on one hand, uniformly positive and, on the other hand, tending to zero as $T\to 0$, giving us a contradiction. Since $\Omega$ is connected and is not a ball by assumption, the rigidity criterion \eqref{mv criterion for rigidity} together with \eqref{mn comparison for balls} and \eqref{rigidity hp} tell us that a classical minimizer for $\Phi_\Om(T)$ cannot exist for $T\in(0,T_*)$, and so we immediately deduce that $\vv_T<1$ for all such $T$. In other words, if we set
  \[
  \nu_T=\big(1-\vv_T^\pstar)^{1/\pstar}=\|u_T\|_{L^\pstar(\Om)}\,,\qquad\tau_T=(T^\psharp-\tt^\psharp)^{1/\psharp}=\|u\|_{L^\psharp(\pa\Om)}\,,
  \]
 then
  $  \nu_T>0$ for all $T\in(0,T_*)$.  So, Theorem \ref{theorem main existence part i implies}-(iii) implies that
  \begin{equation}
    \label{uT su nuT minimizer of phiom}
  \mbox{$u_T/\nu_T$ is a minimizer of $\Phi_\Om(\tau_T/\nu_T)$}\,.
  \end{equation}
    In particular, this means that
  \begin{equation}
  	\label{enq: lower bound on ratio}
  \frac{\tau_T}{\nu_T} \geq T_* \qquad \text{for all } T \in (0,T_*)\,,
  \end{equation}
since as we noted above, no minimizer of $\Phi_\Om(\tilde{T})$ can exist for $ \tilde{T} = \tau_T/\nu_T < T_*$.
Since $T\geq \tau_T$, \eqref{enq: lower bound on ratio} tells us that $T /\nu_T \geq T_* $; rearranging this inequality gives us the following lower bound on $\vv_T$:
  \begin{eqnarray}
    \label{lower bound on vT}
    \vv_T^\pstar\ge1-(T/T_*)^\pstar\,\qquad\forall T\in(0,T_*)\,.
  \end{eqnarray}
From this, an upper bound on the ratio $\tt_T/\vv_T$ follows immediately:
  \begin{eqnarray}
    \label{ttT on vvT goes to zero}
    \frac{\tt_T}{\vv_T}\le \big(1-(T/T_*)^\pstar\big)^{-1/\pstar}\,T\,\qquad\forall T\in(0,T_*)\,.
  \end{eqnarray}
 In particular $\tt_T/\vv_T\to 0$ as $T\to 0^+$.

  \medskip

On the other hand, we will now use the Euler-Lagrange equation for $u_T$ to show that
  \begin{equation}
    \label{stranger things 2}
   \lim_{T\to 0^+}\frac{\tau_T}{\nu_T}=0\,,
  \end{equation}
  which is a clear contradiction to \eqref{enq: lower bound on ratio}.
  Indeed, by Theorem \ref{theorem main existence part i implies}-(iii) and \eqref{uT su nuT minimizer of phiom}, $u_T$ satisfies the Euler-Lagrange equation
  \begin{equation}
  \label{PDE of generic minimizer theorem proof rigidity}
  \left\{
  \begin{split}
    &-\Delta_p u_T=\vv_T^{p-\pstar}\,\l_H(\tt_T/\vv_T)\,u_T^{\pstar-1}\,\quad\hspace{1cm}\mbox{on $\Om$}\,,
    \\
    &|\nabla u_T|^{p-2}\,\frac{\pa u}{\pa\nu_\Om}=\vv_T^{p-\psharp}\,\s_H(\tt_T/\vv_T)\,u_T^{\psharp-1}\, \quad\mbox{on $\pa\Om$}\,;
  \end{split}\right .
  \end{equation}
  see \eqref{PDE of generic minimizer theorem}, \eqref{multipliers gen min equation} and \eqref{PDE of UT} (for the definition of$\l_H(T)$ and $\s_H(T)$). Testing \eqref{PDE of generic minimizer theorem proof rigidity} with $u_T$, we find that
  \begin{eqnarray}\nonumber
  \int_\Om|\nabla u_T|^p&=&\vv_T^{p-\pstar}\,\l_H\Big(\frac{\tt_T}{\vv_T}\Big)\,\int_\Om u_T^\pstar+\vv_T^{p-\psharp}\,\s_H\Big(\frac{\tt_T}{\vv_T}\Big)\,\int_{\pa\Om}u_T^\psharp
  \\\label{sereni}
  \nonumber
  &=&\vv_T^{p-\pstar}\,\l_H\Big(\frac{\tt_T}{\vv_T}\Big)\,\nu_T^\pstar+\vv_T^{p-\psharp}\,\s_H\Big(\frac{\tt_T}{\vv_T}\Big)\,\tau_T^\psharp\,.
  \end{eqnarray}
  After rearranging and multiplying through by $\vv_T^{p^\sharp-p}\nu_T^{-p^\sharp}>0$,  we arrive at the inequality
    \begin{equation}
    \label{strange things}
    -\s_H\Big(\frac{\tt_T}{\vv_T}\Big)\,\Big(\frac{\tau_T}{\nu_T}\Big)^\psharp\le \l_H\Big(\frac{\tt_T}{\vv_T}\Big)\,\Big(\frac{\nu_T}{\vv_T}\Big)^{\pstar-\psharp}\,.
  \end{equation}
  By \eqref{lambdaT behavior} and the continuity of $T\mapsto\l_H(T)$, there are $C>0$ and $T_{**}>0$ such that $\l_H(T)\in(1/C,C)$ for every $T\in(0,T_{**})$. Moreover, thanks to \eqref{lower bound on vT}, we can ask that $\vv_T\ge 1/C$ for $T\in(0,T_{**})$. In particular, by \eqref{ttT on vvT goes to zero} and up to further increasing $C$, if $T<1/C$, then $\tt_T/\vv_T<T_{**}$ and thus \eqref{strange things}, $\nu_T\le 1$ and $\vv_T\ge 1/C$ give
   \begin{equation}
    \label{stranger things}
  -\s_H\Big(\frac{\tt_T}{\vv_T}\Big)\,\Big(\frac{\tau_T}{\nu_T}\Big)^\psharp\le C\,.
  \end{equation}
  By \eqref{sigmaT behavior} and the fact that $\tt_T/\vv_T\to 0$ as $T\to 0$, we see that $\s_H(T)\to -\infty$ as $T\to 0^+$, so that \eqref{stranger things} implies \eqref{stranger things 2}. We reach a contradiction to \eqref{enq: lower bound on ratio}, completing the proof.
\end{proof}

\appendix

\section{Proof of Lemma \ref{lemma lions}}\label{appendix lions} We will use the {\it Brezis--Lieb lemma} (if $(X,\mu)$ is a measure space, $q\ge 1$, and $\{f_j\}_j$ is bounded in $L^q(X)$, then
\begin{equation}
  \label{brezis lieb lemma}
  \int_X |f|^q\,d\mu = \lim_{j \to \infty} \int_X |f_j|^q\,d\mu - \int_X |f_j - f|^q\,d\mu\,,
\end{equation}
provided $f$ is a $\mu$-a.e. limit of $\{f_j\}_j$ on $X$), and the two Sobolev-type inequalities
\begin{eqnarray}
  \label{sob 1}
  \|u\|_{L^\pstar(\Om)}\le C_1\,\Big(\|\nabla u\|_{L^p(\Om)}+\|u\|_{L^p(\Om)}\Big)\,,
  \\
  \label{sob 2}
  \|u\|_{L^\psharp(\pa\Om)}\le C_2\,\Big(\|\nabla u\|_{L^p(\Om)}+\|u\|_{L^p(\Om)}\Big)\,,
\end{eqnarray}
which are valid, with constants $C_1$ and $C_2$ depending on $n$, $p$ and $\Om$ only, as soon as $\Om$ is bounded and has Lipschitz boundary.

\begin{proof}
  [Proof of Lemma \ref{lemma lions}] {\it Step one}: Since $\Om$ is a bounded open set with Lipschitz boundary, $u_j\weak u$ as distributions in $\Om$, and $\{\nabla u_j\}_j$ is bounded in $L^p(\Om)$, standard considerations show that $u\in W^{1,p}(\Om)$, $\{u_j\}_j$ is bounded in $L^\pstar(\Om)$ and in $L^\psharp(\pa\Om)$, and, up to extracting subsequences, $u_j\to u$ in $L^q(\Om)$ for every $q\in[1,\pstar)$, $u_j\to u$ in $L^r(\pa\Om)$ for every $r<\psharp$, $u_j\to u$ pointwise $\L^n$-a.e. on $\Om$ and $\H^{n-1}$-a.e. on $\pa\Om$, and that the sequences of Radon measures $\{\nu_j\}_j$, $\{\tau_j\}_j$ and $\{\mu_j\}_j$ defined in \eqref{def of muj nuj tauj} admits weak-$*$ limits $\nu$, $\tau$ and $\mu$, with $\spt\,\nu$ and $\spt\,\mu$ contained in $\ov{\Om}$, and $\spt\tau$ contained in $\pa\Om$.

  \medskip

  \noindent {\it Step two}: We let $\tilde{\mu}$ denote the weak-$*$ limit of $|\nabla(u_j-u)|^p\,\L^n\llcorner\Om$ (which exists up to possibly extracting a further subsequence), and claim that $\tilde\mu(\{x\})=\mu(\{x\})$ for every $x\in\R^n$. Indeed, by the elementary inequality
  \[
  \big||v+w|^p-|w|^p\big|\le\e\,|v|^p+C(p,\e)\,|w|^p\,,\qquad v,w\in\R^n\,,\e>0\,,
  \]
  given $x\in\R^n$ and $r>0$ we see that
  \begin{eqnarray*}
    \Big|\int_{B_r(x)}|\nabla u_j|^p-\int_{B_r(x)}|\nabla(u_j-u)|^p\Big|\le\e\,\int_{B_r(x)}|\nabla(u_j-u)|^p+C(p,\e)\,\int_{B_r(x)}|\nabla u|^p\,,
  \end{eqnarray*}
  so that letting first $j\to\infty$ (for $r>0$ such that $\mu(\pa B_r(x))=\tilde\mu(\pa B_r(x))=0$) and then $r\to 0^+$ (along a generic sequence of radii), we find indeed $|\mu(\{x\})-\tilde\mu(\{x\})|\le\e\,\tilde\mu(\{x\})$ for every $\e>0$.

  \medskip

  \noindent {\it Step three}: If now pick $\eta\in C^\infty_c(\R^n)$ and set $\tilde{\nu}=\nu-|u|^\pstar\,\L^n\llcorner\Om$ (notice that, by lower semicontinuity, $\tilde\nu$ is a Radon measure), then, by exploiting, in order, $\nu_j\weakstar\nu$, \eqref{brezis lieb lemma}, \eqref{sob 1}, and  $u_j\to u$ in $L^p(\Om)$, we find
\begin{eqnarray*}
  &&\int_{\R^n}\,|\eta|^\pstar\,d\tilde{\nu}\le\liminf_{j\to\infty}\int_\Om\,|\eta|^\pstar\,(|u_j|^\pstar-|u|^\pstar)\,
  =\lim_{j\to\infty}\int_\Om\,|\eta\,(u_j-u)|^\pstar
  \\
  &&\le C_1^\pstar\,\lim_{j\to\infty}\Big(\|\nabla(\eta(u_j-u))\|_{L^p(\Om)}+\|\eta(u_j-u)\|_{L^p(\Om)}\Big)^{\pstar}
  \\
  &&=C_1^\pstar\,\lim_{j\to\infty}\,\|\nabla(\eta(u_j-u))\|_{L^p(\Om)}^\pstar=
  C_1^\pstar\,\lim_{j\to\infty}\,\|\eta\,\nabla(u_j-u))\|_{L^p(\Om)}^\pstar
  \\
  &&\le C_1^\pstar\,\Big(\int_{\R^n}|\eta|^p\,d\tilde{\mu}\Big)^{\pstar/p}\,.
\end{eqnarray*}
In particular, for every $x\in\R^n$ one has
\begin{equation}
  \label{tildenu tildemu}
  \tilde\nu(B_r(x))\le C_1^\pstar\,\tilde\mu(B_r(x))^{\pstar/p}\qquad\mbox{for a.e. $r>0$}\,.
\end{equation}
By \eqref{tildenu tildemu}, $\tilde\nu$ is absolutely continuous with respect to $\tilde\mu$, so that $\tilde\nu=f\,\tilde\mu$ where $f$ is such that, for $\tilde\mu$-a.e. $x\in\R^n$,
\[
f(x)=\lim_{r\to 0^+}\frac{\tilde\nu(B_r(x))}{\tilde\mu(B_r(x))}\,;
\]
in particular, again by \eqref{tildenu tildemu}, for $\tilde\mu$-a.e. $x\in\R^n$ we have
\[
f(x)\le C_1^\pstar\,\lim_{r\to 0^+}\tilde\mu(B_r(x))^{(\pstar/p)-1}=C_1^\pstar\,\mu(\{x\})^{(\pstar/p)-1}\,.
\]
In particular, as $\pstar>p$, if $X=\{x\in\R^n:\mu(\{x\})>0\}=\{x_i\}_{i\in I}\subset\ov\Om$ ($I$ at most countable) denotes the set of atoms of $\mu$, then $f(x)=0$ $\mu$-a.e. on $\R^n\setminus X$, and we have proved that
\begin{equation}
  \label{nu proof}
  \spt\tilde\nu\subset\{x_i\}_{i\in I}\,.
\end{equation}
An entirely analogous argument, this time based on \eqref{sob 2} rather than on \eqref{sob 1}, shows that, if $\tilde\tau=\tau-|u|^\psharp\,\H^{n-1}\llcorner\pa\Om$, then
\begin{equation}
  \label{tau proof}
\spt\tilde\tau\subset \{x_i\}_{i\in I}\cap\pa\Om\,.
\end{equation}
We have thus proved the validity of \eqref{nu}, \eqref{tau} and \eqref{mu} for suitable $\vv_i,\tt_i\ge0$ and $\gg_i>0$: and of course we can discard possible points $x_i$ with $\vv_i=0$ from these decompositions, and directly assume that $\vv_i>0$ for every $i$. The fact that $\gg_i\ge S\,\vv_i$ if $x_i\in\Om$ is immediate by repeating the above argument with arbitrary $\eta\in C^\infty_c(\Om)$ (in which case $C_1$ can be replaced by $1/S$). An analogous argument, this time using Lemma \ref{lemma boundary 0}, shows that $\gg_i\ge \vv_i\,\Phi_H(\tt_i/\vv_i)$ if $x_i\in\pa\Om$.
\end{proof}

\section{Proof of Lemmas \ref{lemma boundary 0} and \ref{lemma boundary}}\label{appendix boundary}
This section is dedicated to the proof of Lemmas~\ref{lemma boundary 0} and  ~\ref{lemma boundary}. We recall the standard Taylor expansions for the inverse and determinant of a matrix that is a perturbation of the identity:
\begin{align}\label{eqn: expand inverse}
(\text{Id}_{\R^n} +t A)^{-1} &= \text{Id}_{\R^n} - t A + O(t^2) \, ,\\
\label{eqn: expand det}
\det (\text{Id}_{\R^n} + tA)& = 1 + t\, \text{trace}A + O(t^2);
\end{align}
see for instance \cite[Lemma 17.4]{maggiBOOK}.

\begin{proof}[Proof of Lemma~\ref{lemma boundary 0}] Note that $\hat {f}$ can equivalently be written as $\hat{f}(x) = x + \ell({\bf p}(x)) e_n$.
We directly see that $\hat{f}$ maps $C^1$-diffeomorphically onto its image with inverse  $\hat{g}(y) = y - \ell({\bf p}(y))x_n$ and \eqref{f hat inclusions} holds because $\ell(0)=|\na \ell(0)|=0$ and $\ell$ is $C^1$.
We compute, in the standard basis for $\R^n$, that
\[
\na \hat{f}({\bf p}(x),x_n ) = \left( \begin{array}{ccc}
Id &  0 \\
\na \ell({\bf p}(x)) & 1 	
 \end{array}
\right)
= Id +  \left( \begin{array}{ccc}
0 &  0 \\
\na \ell({\bf p}(x)) & 0 	
 \end{array}
\right)\,.
\]
Since $\ell $ is $C^1$ with $\na \ell(0)=0$, this gives the first estimate in \eqref{f C1 estimates}. The second estimate in \eqref{f C1 estimates} follows in the same way using the explicit form of $g$ above.  The first estimate in \eqref{f C1 estimate for Jf} follows from \eqref{eqn: expand det} and the expression for $\na \hat{f}$ above. The second estimate in \eqref{f C1 estimate for Jf} follows because $\hat{f} =F$ on ${\bf D}_{r_0}$ (compare with \eqref{def of F}), and so
\[
J^{\pa H} \hat{f} = J^{\pa H} F = \sqrt{1 + |\na \ell|^2}\,.
\]
This completes the proof of the lemma.
\end{proof}

\begin{proof}
  [Proof of Lemma \ref{lemma boundary}] {\it Step one}: We compute some geometric quantities for $\pa \Omega$ using the graphical coordinates defined by the map $F$ given in \eqref{def of F}. We start with first order quantities. For $x \in {\bf D}_{r_0}$ and $i =1,\dots , n-1$, we set $\tau_i := dF_x(e_i)$, i.e.,
\begin{equation}
	\label{eqn: tangent vectors 1}
\tau_i =  \pa_i F(x) = e_i + \pa_i \ell (x) e_n,
\end{equation}
so that $\{ \tau_1, \dots , \tau_{n-1}\}$ forms a basis for $T_{F(x)} \pa \Omega$. Since $\ell$ is $C^2$ and $\na\ell (x) = 0$, we have
\begin{equation}\label{eqn: ON to 2nd order}
	g_{ij}=	\langle \tau_i, \tau_j\rangle_{\R^n}  = \delta_{ij}  + \pa_i \ell \, \pa _j\ell =
\delta_{ij}  + {\rm O}(|x|^2)\,.
\end{equation}
In other words, the metric coefficients in graphical coordinates are Euclidean up to second order in $|x|$.
The volume measure of $\pa \Om$ is given by
\[
J^{\pa H} F = \sqrt{\det g_{ij} }= \sqrt{1+ |\na \ell|^2}\, ;
\]
this immediately gives the second estimate in \eqref{f C2 estimate for Jacobian} since $f=F$ for $x \in {\bf D}_{r_0}$.
The inverse metric coefficients are
\begin{equation}\label{graphical coords: g upper ij}
g^{ij} = \delta^{ij} +\frac{\delta^{ia}\,\delta^{ib}\,\pa_a \ell \, \pa_b \ell}{ 1 +|\na \ell|^2 }\delta^{ij} + O(|x|^2)\,.	
\end{equation}
Recall that, without loss of generality, we have chosen an orthonormal basis for $\R^{n-1}\subset\R^n$ that diagonalizes the Hessian of $\ell$ at $x=0$. Let $\{ \kappa_1 ,\dots ,\kappa_{n-1}\}$ denote the eigenvalues.
 The second fundamental form of $\pa \Omega$ at $x$ is defined by $A_x(v,w) = - \langle d \nu_\Omega (v) , w\rangle$ for tangent vectors $v,w \in T_x(\pa \Omega)$.  Using the shorthand $\nu:= \nu_\Omega \circ F : {\bf D}_{r_0} \to S^{n-1}$, the coefficients of the second fundamental form in the coordinates defined by $F$ are given by $A_{ij} = \langle \pa_{ij} F , \nu\rangle_{\R^n}.$ Differentiating \eqref{eqn: tangent vectors 1} above, we have $
	\pa_{ij} F =\pa_j \tau_i = \pa_{ij}\ell\, e_n$, and thus for $i,j \in \{1,\dots, n-1\}$ we have
	\begin{equation}\label{graphical coords: A lower ij}
	A_{ij} = \langle \pa_{ij} F, \nu \rangle = \pa_{ij}\ell \,\langle e_n ,\nu\rangle = \frac{-\pa_{ij} \ell}{\sqrt{1+ |\na \ell|^2}} = -\pa_{ij}\ell + {\rm O}(|x|^2)\,.
		\end{equation}
We have
\begin{equation}\label{graphical coords: A upper lower}
\pa_i \nu = -A_i^j \tau_j\,,
\end{equation}
and  from \eqref{graphical coords: g upper ij} and \eqref{graphical coords: A lower ij} we directly compute that $A_i^j = g^{ik}A_{kj}$ is given by
\begin{align*}
	A_i^j = \frac{- \pa_{ij}\ell}{\sqrt{1+ |\na \ell|^2}}  +O(|x|^2) & = -\pa_{ij}\ell + O(|x|^2)\\
	& = -\pa_{ij}\ell(0) + O(|x|^2) = -\kappa_j\delta_{i}^j+ O(|x|^2).
\end{align*}
\medskip

\noindent{\it Step 2:} Next, we use the previous step to compute geometric quantities associated to the coordinates defined by $f$.
For $x \in {\bf C}_{r_0}$ and $i=1,\dots, n-1$, we note that $\pa_i {\bf p}(x) = e_i$ and thus from \eqref{graphical coords: A upper lower},
In what follows we will suppress the composition with ${\bf p}$ in our notation, writing for instance $\tau_i$ in place of $\tau_i \circ {\bf p}(x)$. For $i=1,\dots ,n-1$, we have
\begin{align}
		\label{graphical fermi tangent vectors}
	 \pa_if(x) &= \tau_i  - x_n \pa_i \nu({\bf p}(x))  \\
	\nonumber &= e_i + \pa_i \ell\, e_n + x_n A_i^j \tau_j	 = e_i + \pa_i \ell\, e_n -x_n \kappa_i e_i + O(|x|^2),\\
\text{ and }\qquad
	\label{graphical fermi nth tangent vector}
	 \pa_n f(x) &= -\nu = \frac{e_n-\na \ell }{\sqrt{1+ |\na \ell|^2}} 	 = e_n -\na \ell+ O(|x|^2).	
	\end{align}
Together \eqref{graphical fermi tangent vectors} and \eqref{graphical fermi nth tangent vector} can be expressed in consolidated form as
\begin{align*}
	\na f = \sum_{i=1}^n \pa_i f \otimes e_i
	& = \sum_{i=1}^n e_i \otimes e_i + e_n \otimes \na \ell -\na \ell \otimes e_n - x_n  \sum_{i,j =1}^{n-1} \kappa_i(0) e_i \otimes e_i + O(|x|^2)\\
	&= \text{Id}_{\R^n} + e_n \otimes \na \ell -\na \ell \otimes e_n - x_n  \sum_{i,j =1}^{n-1} \kappa_i (0) e_i \otimes e_i + O(|x|^2).
\end{align*}
In particular,  from \eqref{eqn: expand det} we see that the volume form is given by
\[
Jf= \sqrt{\det g_{ij}} = 1 - x_n H_{\Omega}(0) + O(|x|^2)\,,
\]
giving us the first estimate in \eqref{f C2 estimate for Jacobian}.
We see directly from the definition that $f$ is a $C^1$ map, and since we see from the expression for $\na f$ above that $\na f(0) = \text{Id}_{\R^n}$, we may apply the inverse function theorem to see that, up to decreasing $r_0$, $f$ defines a $C^1$ diffeomorphism onto its image. Letting $g= f^{-1}$ and using the expansion of the inverse \eqref{eqn: expand inverse}, we find
\begin{align*}
(\na g)\circ f&  =(\na f )^{-1} = \text{Id}_{\R^n} - e_n \otimes \na \ell+ \na \ell \otimes e_n  + x_n \sum_{i,j=1}^{n-1} \kappa_i(0) e_i \otimes e_i + O(|x|^2).	
\end{align*}
Finally, \eqref{f inclusions} follows from these expressions for $\na f$ and $\na g$, along with the assumptions that $\na \ell(0) = 0$. This completes the proof of the lemma.
\end{proof}

\bibliographystyle{alpha}
\bibliography{references}

\begin{thebibliography}{CENV04}

\bibitem[Aub76]{aubin76}
Thierry Aubin.
\newblock Probl\`emes isop\'{e}rim\'{e}triques et espaces de {S}obolev.
\newblock {\em J. Differential Geometry}, 11(4):573--598, 1976.

\bibitem[BL85]{brezisliebREMAINDER}
Ha\"{\i}m Brezis and Elliott~H. Lieb.
\newblock Sobolev inequalities with remainder terms.
\newblock {\em J. Funct. Anal.}, 62(1):73--86, 1985.

\bibitem[CE08]{cheegerebinBOOK}
Jeff Cheeger and David~G. Ebin.
\newblock {\em Comparison theorems in {R}iemannian geometry}.
\newblock AMS Chelsea Publishing, Providence, RI, 2008.
\newblock Revised reprint of the 1975 original.

\bibitem[CENV04]{CENV2004}
D.~Cordero-Erausquin, B.~Nazaret, and C.~Villani.
\newblock A mass-transportation approach to sharp {S}obolev and
  {G}agliardo-{N}irenberg inequalities.
\newblock {\em Adv. Math.}, 182(2):307--332, 2004.

\bibitem[CL90]{carlenloss1}
E.~A. Carlen and M.~Loss.
\newblock Extremals of functionals with competing symmetries.
\newblock {\em J. Funct. Anal.}, 88(2):437--456, 1990.

\bibitem[CL94]{carlenloss2}
E.~A. Carlen and M.~Loss.
\newblock On the minimization of symmetric functionals.
\newblock {\em Rev. Math. Phys.}, 6(5A):1011--1032, 1994.
\newblock Special issue dedicated to Elliott H. Lieb.

\bibitem[Esc88]{escobarTRACE_INQ88}
Jos\'{e}~F. Escobar.
\newblock Sharp constant in a {S}obolev trace inequality.
\newblock {\em Indiana Univ. Math. J.}, 37(3):687--698, 1988.

\bibitem[Kno57]{knothe}
H.~Knothe.
\newblock Contributions to the theory of convex bodies.
\newblock {\em Mich. Math. J.}, 4:39--52, 1957.

\bibitem[Lie92]{Lieberman1992}
Gary Lieberman.
\newblock On the natural generalization of the natural conditions of
  {L}adyzhenskaya and {U}ral'tseva.
\newblock {\em Banach Center Publications}, 27(2):295--308, 1992.

\bibitem[Mag12]{maggiBOOK}
F.~Maggi.
\newblock {\em Sets of finite perimeter and geometric variational problems: an
  introduction to Geometric Measure Theory}, volume 135 of {\em Cambridge
  Studies in Advanced Mathematics}.
\newblock Cambridge University Press, 2012.

\bibitem[Maz85]{mazyaBOOKonSobolev}
Vladimir~G. Maz'ja.
\newblock {\em Sobolev spaces}.
\newblock Springer Series in Soviet Mathematics. Springer-Verlag, Berlin, 1985.
\newblock Translated from the Russian by T. O. Shaposhnikova.

\bibitem[MN17]{maggineumayer}
Francesco Maggi and Robin Neumayer.
\newblock A bridge between {S}obolev and {E}scobar inequalities and beyond.
\newblock {\em J. Funct. Anal.}, 273(6):2070--2106, 2017.

\bibitem[MS86]{Gromov}
V.~D. Milman and G.~Schechtman.
\newblock {\em Asymptotic theory of finite-dimensional normed spaces. With an
  appendix by M. Gromov}.
\newblock Number 1200 in Lecture Notes in Mathematics. Springer-Verlag, Berlin,
  1986.

\bibitem[MV05]{maggivillaniJGA}
Francesco Maggi and C\'{e}dric Villani.
\newblock Balls have the worst best {S}obolev inequalities.
\newblock {\em J. Geom. Anal.}, 15(1):83--121, 2005.

\bibitem[MW19]{MarinoWinkertMoser}
Greta Marino and Patrick Winkert.
\newblock Moser iteration applied to elliptic equations with critical growth on
  the boundary.
\newblock {\em Nonlinear Analysis}, 180:154--169, 2019.

\bibitem[Naz06]{Nazaret06}
Bruno Nazaret.
\newblock Best constant in {S}obolev trace inequalities on the half-space.
\newblock {\em Nonlinear Anal.}, 65(10):1977--1985, 2006.

\bibitem[Tal76]{talenti76}
Giorgio Talenti.
\newblock Best constant in {S}obolev inequality.
\newblock {\em Ann. Mat. Pura Appl. (4)}, 110:353--372, 1976.

\end{thebibliography}
\end{document}